\DeclarePairedDelimiterX\setc[2]{\{}{\}}{\,#1 \;\delimsize\vert\; #2\,}
\def\paragraph#1{{\bf #1\ }}
 \def\OO{\rm \hbox{O\kern-.34em\raise.47ex
         \hbox{$\scriptscriptstyle |$}\kern-.46em\raise.47ex
         \hbox{$\scriptscriptstyle |$}\kern+0.5 em }}
\def\RR{\mbox{\mathrm I\hspace{-0.50ex}R} }
\def\hcboxcm#1#2{\hbox to #1{\hfill #2 \hfill}}
\def\null{\hbox{}}
\def\tn1{\widetilde n_1}
\def\tn2{\widetilde n_2}
\def\tn{\widetilde n }
\let\ds\displaystyle
\def\be{\begin{equation}}
\def\ee{\end{equation}}
\def\bea{\begin{eqnarray}}
\def\eea{\end{eqnarray}}
\def\bean{\begin{eqnarray*}}
\def\eean{\end{eqnarray*}}
\def\RR{{\mathrm{ I~\hspace{-1.15ex}R}}}
\def\qquad{{\quad\quad}}
\def\={\, = \, }
\def\Box{\leavevmode\vbox{\hrule
     \hbox{\vrule\kern4pt\vbox{\kern4pt}%
           \vrule}\hrule}}
\def\blackbox{\leavevmode\vrule height 5pt width 4pt depth 0pt\relax}
\def\eqalign#1{\null\,\vcenter{\openup1\jot \m@th
   \ialign{\strut \hfil$\displaystyle{##}$ & $\displaystyle{{}##}$\hfil
      \crcr#1\crcr}}\,}
\def\eqalignrll#1{\null\,\vcenter{\openup1\jot \m@th
   \ialign{\strut \hfil$\displaystyle{##}$ & $\displaystyle{{}##}$\hfil
    & $\displaystyle{{}##}$\hfil
      \crcr#1\crcr}}\,}
\def\eqalignrcl#1{\null\,\vcenter{\openup1\jot \m@th
   \ialign{\strut \hfil$\displaystyle{##}$ &\hfil $\displaystyle{{}##}$\hfil
    & $\displaystyle{{}##}$\hfil
      \crcr#1\crcr}}\,}
\def\eqalignno#1{\displ@y \tabskip\@centering
  \halign to\displaywidth{\hfil$\@lign\displaystyle{##}$\tabskip\z@skip
    &$\@lign\displaystyle{{}##}$\hfil\tabskip\@centering
    &\llap{$\@lign##$}\tabskip\z@skip\crcr
    #1\crcr}}
\newcounter{appendix}
\newcounter{sectionz}
\def\appendix{\advance\c@appendix by 1
   \def\thesectionz {\Alph{appendix}}
\def\thesection{\Alph{appendix}} 
   \ifnum\c@appendix=1 \setcounter{section}{-1} \fi
   \@startsection {section}{1}{\z@}{-3.5ex plus -1ex minus 
  -.2ex}{2.3ex plus .2ex}{\large\bf}}
\newtheorem{lemme}{Lemma}[section]  
\newtheorem{theorem}[lemme]{Theorem}
\newtheorem{corollary}[lemme]{Corollary}
\newtheorem{definition}[lemme]{Definition}
\newtheorem{proposition}[lemme]{Proposition}
\newtheorem{remark}[lemme]{Remark} 
\newcounter{hypothesiscounter}
\def\deblem{\begin{lemme}\it }
\def\finlem{\end{lemme}}
\def\debthm{\begin{theorem}\it }
\def\finthm{\end{theorem}}
\def\debprop{\begin{proposition} \it}
\def\finprop{\end{proposition}}
\def\debcor{\begin{corollary}\it }
\def\fincor{\end{corollary}}
\def\debdef{\begin{definition}\it}
\def\findef{\end{definition}}
\def\debrem{\begin{remark}\em}
\def\finrem{\null\hfill\blackbox\end{remark}}
\def\OO{\mathbb{O}}
\def\RR{\mathbb{R}}
\def \eps{\epsilon}
\def\bfu{\mathbf{u}}
\def\bfv{\mathbf{v}}
\def\bfx{\mathbf{x}}
\def\bfE{\mathbf{E}}
\def\bfB{\mathbf{B}}
\def\bfb{\mathbf{b}}
\newcommand{\converge}[1]{%
\ensuremath{\displaystyle{\smash{\,\mathop{\rightharpoonup}\limits_{\epsilon \to0}^{\star}\,}}}}
\newcommand{\convergebis}[1]{%
\ensuremath{\displaystyle{\smash{\,\mathop{\rightarrow}\limits_{\epsilon \to0}^{#1}\,}}}}
\newcommand{\converges}[1]{%
\ensuremath{\displaystyle{\smash{\,\mathop{\rightharpoonup}\limits_{\sigma \to0}^{#1}\,}}}}
\newcommand{\convergess}[1]{%
\ensuremath{\displaystyle{\smash{\,\mathop{\rightharpoonup}\limits_{\epsilon \to0}^{#1}\,}}}}
\title[Stiff transport equations and their AP-resolution]{Asymptotic-Preserving scheme for the resolution of evolution equations with stiff transport terms}
\author[B. Fedele, C. Negulescu, S. Possanner]{Baptiste Fedele, Claudia Negulescu, Stefan Possanner}
\address{Universit\'e de Toulouse \& CNRS, UPS, Institut de Math\'ematiques de Toulouse UMR 5219, F-31062 Toulouse, France.}
\email{baptiste.fedele@math.univ-toulouse.fr\\
claudia.negulescu@math.univ-toulouse.fr}
\date{\today}
\begin{document}

\maketitle

\begin{abstract}
\noindent We develop an asymptotic-preserving scheme to solve evolution problems containing stiff transport terms. This scheme is based to a micro-macro decomposition of the unknown, coupled with a stabilization procedure. The numerical method is applied to the Vlasov equation in the gyrokinetic regime and to the Vlasov-Poisson 1D1V equation, which occur in plasma physics. The asymptotic-preserving properties of our procedure permit to study the long-time behavior of these models. In particular, we limit drastically by this method the numerical pollution, appearing in such time asymptotics when using classical numerical schemes.
\end{abstract}

\bigskip

\keywords{{\bf Keywords:} Plasma physics, kinetic equations, Vlasov-Poisson system, asymptotic analysis, asymptotic-preserving schemes, BGK equilibria} 

\section{Introduction} \label{SEC1}
The main objective of this work is to introduce and subsequently investigate an efficient numerical scheme for the resolution of evolution equations containing stiff transport terms, namely
\be \label{ANI_TR}
\partial_t f^\eps + {\mathcal L} f^\eps+{\bfb \over \eps} \cdot \nabla f^\eps =0\,, \quad t \in \RR^+ \,, \,\,\, \bfx \in \Omega \subset \RR^d\,,
\ee
where $\bfb : \RR^+ \times \Omega \rightarrow \RR^d$ is a known (passive, linear transport model) or self-consistently computed (active, nonlinear transport model) vector-field satisfying $\nabla \cdot \bfb =0$\,, and $\mathcal L$ is a given operator (for ex. transport or diffusion operator). The small parameter $\eps \ll 1$ represents the stiffness of the problem and signifies that we have to cope with a very strong vector-field $\bfb$. It brings up the main difficulties in the numerical resolution of \eqref{ANI_TR} and this due to the introduction of multiple scales in the problem. Indeed, the dynamics along the $\bfb$-field is very rapid, as compared to its perpendicular evolution. In the formal limit $\eps \rightarrow 0$, the problem reduces to the constraint 
\be \label{Red}
\bfb \cdot \nabla f^0 =0\,,
\ee
which signifies that the unknown $f^0$ is constant along the field-lines of $\bfb$. However, in general, problem \eqref{Red} does not permit to determine this constant, for example when $\bfb$ has closed field lines. Thus the reduced problem \eqref{Red} is ill-posed, information has been lost while setting formally $\eps =0$ in \eqref{ANI_TR}. This feature is typical for singularly-perturbed problems or multi-scale problems (see \cite{lebris, wein}).\\

The study of multi-scale problems is very arduous from a mathematical as well a numerical point of view. Standard explicit numerical schemes require very small time steps, dependent on the $\eps$-parameter, in order to accurately account for the microscopic information (living at the $\eps$-scale). This procedure, even if accurate, has however the big disadvantage of being numerically very costly in simulation time and memory. Fully implicit schemes or IMEX-schemes are also not of use for $\eps \ll 1$, due to the ill-conditioned limit model. Alternative methodologies are thus required taking into account for the various scales present in the problem. Asymptotic analysis will be one of the mathematical tools used in this paper, permitting to recover the microscopic information lost in the reduced model \eqref{Red} and the numerical scheme presented here is based on such developments.\\

Evolution equations of the type \eqref{ANI_TR} arise often in applications coming from fluid dynamics (see \cite{majda}) and plasma physics (see \cite{CHENF,HM,Ruther}). To mention only some examples, in thermonuclear tokamak plasmas, the evolution of ions is described via the non-dimensional Vlasov ($\eta=0$) or Fokker-Planck ($\eta \neq 0$) equation
\begin{equation} \label{Boltz}
\partial_t f_{i}^{\epsilon} + \bfv \cdot \nabla_{\bfx} f_{i}^{\epsilon} +  \Big(\bfE+ {1 \over \eps}\,\bfv \times \bfB \Big) \cdot \nabla_{\bfv} f_{i}^{\epsilon} =\eta \, \nabla_{\bfv} \cdot \left[ \bfv f_i^\eps + \nabla_{\bfv} f_i^\eps\right]\,,
\end{equation}
where $f_{i}^{\epsilon}(t,\bfx,\bfv)$ represents the ion distribution, dependent on time, space and velocity. This equation is coupled via the electromagnetic fields $(\bfE(t,\bfx),\bfB(t,\bfx))$ to an equation describing the electron evolution. The coupling is done by means of Maxwell's equations or Poisson equation in the electrostatic case.
The magnetic field is very strong in tokamak experiments in the aim to confine the plasma and to render the fusion possible. This feature is translated in \eqref{Boltz} in the magnitude of the scaling parameter $\eps \ll 1$.\\

The second example we shall be interested in here, concerns the long-time asymptotic study of the electron $1D1V$ Vlasov-Poisson system
\be \label{VP1D1V}
\left\{
\begin{array}{l}
\ds \partial_t f_{e} + v \, \partial_x f_{e} - E(t,x) \, \partial_vf_{e} = 0\,, \qquad \forall t \in \RR^+\,, \,\,\, \forall (x,v)\in \Omega \subset \RR^2\\[3mm]
\ds - \partial_{xx} \varphi = 1 -n_{e}\,, \qquad n_{e}(t,x)=\int_{\RR} f_{e}(t,x,v)\, dv\,, \qquad E=-\partial_x \varphi\,.
\end{array}
\right.
\ee
Introducing the field $\bfu:=(v,-E(t,x))^t$ and the stream function $\Psi:={1 \over 2}|v|^2 - \varphi(t,x)$, one has $\bfu=^\perp \! \nabla \Psi$, where $^\perp  \nabla:=(\partial_v, - \partial_x)$. Considering additionally long-time scales, the Vlasov-Poisson system \eqref{VP1D1V} transforms into the nonlinear, coupled system
\be \label{VP1D1V_bis}
\left\{
\begin{array}{l}
\ds \partial_t f_{e}^\eps + {\bfu^\eps \over \eps} \cdot \nabla_{x,v} f_{e}^\eps  = 0\,, \qquad \forall (t,x,v) \in \RR^+ \times \Omega\\[3mm]
\ds - \Delta_{x,v} \Psi^\eps = n_{e}^\eps-2\,, \qquad n_{e}^\eps(t,x)=\int_{\RR} f_{e}^\eps(t,x,v)\, dv\,, \qquad \bfu^\eps=^\perp \! \nabla \Psi^\eps\,.
\end{array}
\right.
\ee

\vspace{0.2cm}
Finally our last example (which shall not be treated in this paper) comes from fluid mechanics : consider the incompressible Euler equations in the long-time scaling, describing a bi-dimensional, inviscid flow  with velocity $\bfu :=(u_1, u_2, 0)$ and pressure $p$
\be \label{EULER_EQ}
\left\{
\begin{array}{rcl}
\ds \eps\,\partial_t \bfu^\eps + (\bfu^\eps \cdot \nabla)\bfu^\eps + \nabla p^\eps&=&0\,, \qquad \forall (t,\bfx) \in \RR^+ \times \Omega\,,\\[3mm]
\ds \nabla \cdot \bfu^\eps&=&0\,.
\end{array}
\right.
\ee
Introducing the vorticity $\omega^\eps:=\nabla \times \bfu^\eps$, the Euler system leads to the following nonlinear, coupled system 
\be \label{VOR}
\left\{
\begin{array}{l}
\ds \partial_t \omega^\eps + {\bfu^\eps \over \eps} \cdot \nabla \omega^\eps=0\,,\\[3mm]
\ds - \Delta \Psi^\eps = \omega^\eps\,, \qquad \bfu^\eps = ^\perp \! \nabla \Psi^\eps\,,
\end{array}
\right.
\ee
constituted of a transport equation for the vorticity, which is self-consistently coupled with a Poisson equation for the determination of the stream-function $\Psi^\eps$, result of the divergence-free constraint of $\bfu^\eps$. Sometimes one can add on the right hand side of the first equation in \eqref{VOR} a small viscosity term $\nu\, \Delta \omega^\eps$, $\nu$ being the reciprocal of the Reynolds number. The new modified equation \eqref{VOR} is coming then from the incompressible Navier-Stokes equations.\\

The goal of this work is now to present and investigate an efficient, uniformly accurate and stable (wrt. $\eps$) numerical scheme for the resolution of the following linear, stiff transport problem
\be \label{ADV_bis}
(V)^\eps\,\,\,
\left\{
\begin{array}{l}
\ds \partial_t f^\eps + {\bfb \over \eps} \cdot \nabla f^\eps =0\,, \qquad \forall  t \in (0,T) \,, \,\,\, \forall \bfx =(x,y) \in \Omega \subset \RR^2\,,\\[2mm]
\ds f^\eps(0,\bfx)=f_{in}(\bfx) \qquad \forall \bfx \in \Omega\,,
\end{array}
\right.
\ee
with given, smooth and time-independent vector-field $\bfb : \Omega \rightarrow \RR^2$, satisfying $\nabla \cdot \bfb =0$. This simplified transport equation contains all the numerical difficulties arising also in the original equation \eqref{ANI_TR}. Given an efficient numerical algorithm for the resolution of \eqref{ADV_bis}, the treatment of the examples mentioned above is straightforward. Indeed, the nonlinear coupling can be treated iteratively, as shall be shown in Section \ref{SEC6} for the Vlasov-Poisson test case, and the discretization of the general not-stiff term ${\mathcal L} f^\eps$ of \eqref{ANI_TR} can be done via standard schemes suited for this particular operator. The scheme we propose in this paper shall be verified and validated in two test cases, corresponding firstly to a simplified version of the gyrokinetic scaling \eqref{Boltz}, containing only the stiff magnetic term ${1 \over \eps}\,(\bfv \times \bfB) \cdot \nabla_{\bfv} f_{i}^{\epsilon} $, and to the long-time asymptotics of the Vlasov-Poisson system \eqref{VP1D1V_bis}.\\

Due to the divergence constraint of $\bfb$, there exists a stream-function $\Psi$ such that $\bfb=^\perp \!\! \nabla \Psi$. Using the Poisson-bracket notation for two functions $\chi,\theta$, namely
$$
\{\chi,\theta\}:=\partial_x \chi\, \partial_y \theta - \partial_y \chi\, \partial_x \theta= \nabla \chi \cdot ^\perp \!\! \nabla \theta\,,
$$
the transport equation \eqref{ADV_bis} can be simply rewritten as
\be \label{ADV_duo}
(V)^\eps\,\,\,\left\{
\begin{array}{l}
\ds \partial_t f^\eps + {1 \over \eps} \{f^\eps, \Psi\} =0\,, \quad t \in \RR^+ \,, \,\,\, \bfx \in \Omega \subset \RR^2\,,\\[2mm]
\ds f^\eps(0,\cdot)=f_{in}\,,
\end{array}
\right.
\ee
and shall be completed with adequate boundary conditions, depending on the shape of the domain $\Omega$ and on the vector-field $\bfb$. In order to recover the examples presented above, we shall investigate two different cases, resumed in the following Hypothesis.\\

{\bf Hypothesis A} : {\it The domain $\Omega$ will be either the whole $\RR^2$ (case  \eqref{Boltz}) or an infinite strip $(L_1,L_2)\times \RR$ (case \eqref{VP1D1V_bis}) of the $(x,y)$-plane. In the second case, we shall assume periodic boundary conditions in $x$ and the field $\bfb$ is supposed to be also periodic in $x$.}\\

Our main goal is to understand in detail the features of the Asymptotic-Preserving scheme we intent to propose for the resolution of \eqref{ADV_bis}. In particular, we aim to:
\begin{itemize}
\item design a simple and robust numerical scheme, working on a Cartesian grid;
\item design a scheme which enjoys the Asymptotic-Preserving properties (AP-scheme), in the sense that it has to be uniformly stable and accurate wrt. $\eps$;
\item give a detailed explanation why the proposed AP-method behaves better than standard methods (explicit, implicit, IMEX);
\item design a scheme which has to be simply ``generalizable'' to more dimensions and various advection fields.  
\end{itemize}
Let us underline at this point one important fact. We were interested in designing a scheme working on a Cartesian grid.  One can imagine that for stiff problems of the type \eqref{ADV_bis} (or more generally \eqref{ANI_TR}), it could be better to adapt the coordinate system, choosing field-aligned variables, and transforming thus the problem into an evolution problem with a strong anisotropy aligned with one coordinate axis, problem which is much simpler to solve (via IMEX schemes for ex., see \cite{FN}). Our aim however was rightly to avoid a coordinate transformation and to design a simple scheme based on a Cartesian grid. The advantage is that the numerical treatment is very simple, the disadvantage will be mentioned in Section \ref{SEC2}, namely the introduction of a second, auxiliary unknown. Our scheme is hence an alternative to the existing schemes for such evolution problems with stiff transport terms, and marries at the same time simplicity and Asymptotic-Preserving property.\\ 


Several AP-schemes were designed in the last years for various types of problems, including anisotropic elliptic \cite{DLNN,DDLNN} or parabolic \cite{MN} equations, Vlasov equation in the hydrodynamic regime \cite{filbet} or drift-diffusion regime \cite{crestetto2,klar}, Vlasov equation in the high-field limit \cite{crou3, jin}, Euler equation in the low-Mach regime \cite{Degond,dimarco}. Briefly, an AP-scheme is a numerical scheme specially designed for singularly-perturbed problems $P^{\eps}$, containing some small parameter $\eps \ll 1$, and which enjoy the following properties (see commutative diagram \ref{diag1}):
\begin{itemize}
\item for fixed $\eps >0$, the AP-scheme, denoted in this diagram $P^{\eps,h}$, is a consistent
  discretization of  the continuous problem $P^{\eps}$, where
  $h=(\Delta t, \Delta \bfx)$;
\item the stability condition is independent of $\eps$;
\item for fixed discretization parameters $h=(\Delta t, \Delta \bfx)$,
  the AP-scheme $P^{\eps,h}$ provides in the limit $\eps \rightarrow
  0$ a consistent discretization of the limit problem $P^{0}$.
\end{itemize}
\vspace{0.3cm}
\begin{figure}[!ht]
  \centering
  \psfrag{T1}[][][1.]{$P^{\varepsilon,h}$}
  \psfrag{T2}[][][1.]{$P^{\varepsilon}$}
  \psfrag{T4}[][][1.]{$P^{0,h}$}
  \psfrag{T3}[][][1.]{$P^{0}$}
  \psfrag{L1}[][][1.]{$\varepsilon\rightarrow 0$}
  \psfrag{H1}[][][1.]{$h\rightarrow 0$}
  \includegraphics[width=0.5\textwidth]{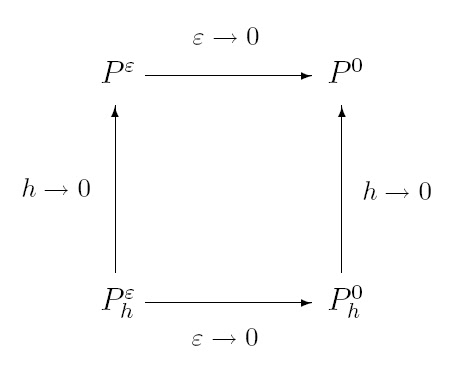}
  \caption{Properties of AP-schemes.}
  \label{diag1}
\end{figure}

One can put these schemes in the category of multi-scale numerical methods. At the end, let us also remark here that standard schemes for the resolution of \eqref{ANI_TR} exist in literature, based on Galerkin methods \cite{gamba}, IMEX-techniques \cite{BFR} or spectral methods \cite{FP}.\\

The outline of this paper is the following. In Section \ref{SEC2}, the asymptotic-preserving reformulation of the singularly-perturbed advection equation \eqref{ADV_bis} is detailed. The reformulation is based on a micro-macro decomposition and a stabilization method. The Section \ref{SEC3} deals with some mathematical aspects of the AP-reformulation, in order to show the well-posedness of this latter. Section \ref{SEC4} presents the numerical discretization of our asymptotic-preserving procedure. Section \ref{SEC5} focuses on a mathematical test case and its numerical resolution by our AP-scheme. In particular, we study deeply the stabilization of the numerical scheme. Finally Section \ref{SEC6} is dedicated to the numerical resolution of the Vlasov-Poisson 1D1V system. We focus notably in this part on the long-time behavior of the two-stream instability, leading to BGK-like equilibria. The last section concludes the paper with some remarks and perspectives.

\section{Asymptotic-Preserving reformulation} \label{SEC2}
We shall present in this section an AP-reformulation of the singularly-perturbed advection problem 
\eqref{ADV_bis} completed with adequate boundary conditions, explicited in {\it Hypothesis A}, scheme which shall behave better (regularly) in the limit $\eps \rightarrow 0$. For this, the well-posed limit-model has firstly to be identified by investigating the asymptotic behaviour of the solutions $f^\eps$, as $\eps \ll 1$. We underline here that $\bfb$ is time-independent in the following, if not explicitly mentioned, as in Section \ref{SEC6}.
\subsection{Identification of the limit model}\label{SUB21}
As mentioned in the introduction, letting formally $\eps \rightarrow 0$ in \eqref{ADV_bis}, leads to an ill-posed problem, which does not permit to compute in a unique manner the limit solution $f^0(t,x,y)$. The only information we get is that $f^0$ is constant along the field-lines of $\bfb$. 

\bigskip

\noindent In order to establish the limit model $(V)^0$ corresponding to \eqref{ADV_bis}, let us suppose that $f^\eps$ admits the following Hilbert expansion
\be \label{HI}
f^{\eps} = f^0 + \eps f^1 + \eps ^2 f^2 + ... \; .
\ee
Injecting this Ansatz in \eqref{ADV_bis} leads to the infinite hierarchy of equations

\begin{align}
\bfb \cdot \nabla f^0 &=0, \label{project1} \\
\partial_t f^0 + \bfb \cdot \nabla f^1 &=0, \label{project2} \\ 
\partial_t f^1 + \bfb \cdot \nabla f^2 &=0, \\
& \hspace{-1cm} \vdots \nonumber
\end{align}

\noindent Equation \eqref{project1} reveals that $f^0$ belongs to the kernel of the dominant operator $\mathcal{T} := \mathbf{b} \cdot \nabla$. However, this information is not enough to determine completely $f^0$. It is necessary to use the next equation \eqref{project2}, to get the missing information. To eliminate $f^1$ from this equation, one projects \eqref{project2} on the kernel of $\mathcal{T}$. This projection is nothing else than the average of a quantity $q$ along the field lines of $\bfb$ and will be denoted by $\langle q \rangle$. Briefly, if $Z(s;\bfx)$ is the characteristic flow associated to the field $\bfb$, {\it i.e.}
$$
\left\{
\begin{array}{l}
\ds {d \over ds} Z(s;\bfx)=\bfb(Z(s;\bfx))\,,\\[2mm]
\ds Z(0;\bfx)=\bfx\,,
\end{array}
\right.
$$
the average of a function $q \in L^2(\Omega)$ over the field lines of $\bfb$ is defined as
$$
\langle q \rangle(\bfx):= \lim_{ S \rightarrow \infty} {1 \over S} \int_0^S q(Z(s;\bfx))\, ds\quad \forall \bfx \in \Omega\,.
$$
One can show (after some hypothesis on the regularity of $\bfb$, see \cite{mb}) that $\langle \cdot \rangle$ is a well-defined application, furthermore that $\langle q \rangle$ is constant along the field lines of $\bfb$ and $\langle \bfb \cdot \nabla q \rangle=0$. The above mentioned procedure permits then to obtain a well-posed limit model for $f^0$. We already know that $f^0$ belongs to the kernel of $\mathcal{T}$, meaning $f^0=\langle f^0 \rangle$, such that the limit model $(V)^0$ writes 
\be \label{ADV_bis_0}
(V)^0\,\,\, \left\{
\begin{array}{l}
\ds \partial_t f^0=0, \qquad \bfb  \cdot \nabla f^0 =0\,, \qquad \forall (t,\bfx) \in (0,T) \times \Omega\,,\\[3mm]
\ds f^0(0,\bfx)=\langle f_{in}(\bfx) \rangle \in \ker \mathcal{T} \,, \qquad \bfx \in \Omega\,.
\end{array}
\right.
\ee

\noindent The following theorem proves rigorously the convergence of the solution $f^\eps$ of \eqref{ADV_bis} towards the solution $f^0$ of the limit model \eqref{ADV_bis_0}, as $\eps \rightarrow 0$.
\begin{theorem} \cite{mb}
Consider a subset $\Omega$ of $\mathbb{R}^2$ satisfying Hypothesis A. Assume $\mathbf{b} \in W_{loc}^{1,\infty}(\RR^2)$ (where in the case $\Omega$ is a strip, we extend $\bfb$ periodically to the whole $\RR^2$) satisfying $\nabla \cdot \bfb =0$ as well as the growth condition
$$
\exists C>0 \,\,\, \textrm{s.t.}\,\,\, |\bfb(\bfx)| \le C \, (1+ |\bfx|)\,, \quad \forall \bfx \in \Omega\,.
$$
Suppose furthermore that $f_{in} \in L^2(\Omega)$. Then \eqref{ADV_bis} resp. \eqref{ADV_bis_0} admit unique weak solutions $f^\eps,f^0 \in L^{\infty}(0,T ; L^2(\Omega))$ and one has $f^\eps \converge{}f^0$, weakly-$\star$ in $L^{\infty}(0,T ; L^2(\Omega))$.\\
If the initial conditions are well prepared in the sense that $f^\eps_{in}$ is smooth enough and satisfies $f^\eps_{in} \convergebis{} f^0_{in} \in \ker {\mathcal T}$ in $L^2(\Omega)$, then one has even  $f^\eps \convergebis{} f^0$ in $L^{\infty}(0,T ; L^2(\Omega))$.
\end{theorem}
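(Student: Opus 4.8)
The plan is to combine a uniform-in-$\eps$ energy estimate with a weak-compactness argument to get the weak-$\star$ limit, and then to upgrade to strong convergence by means of an exact error identity valid under well-prepared data. The divergence-free structure of $\bfb$ (equivalently, the skew-symmetry of $\mathcal{T}=\bfb\cdot\nabla$ on $L^2(\Omega)$) is the engine behind every step.

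First I would establish the a priori bound. Multiplying $(V)^\eps$ by $f^\eps$ and integrating over $\Omega$, the stiff term vanishes since $\int_\Omega(\bfb\cdot\nabla f^\eps)\,f^\eps\,d\bfx=\tfrac12\int_\Omega\bfb\cdot\nabla\big((f^\eps)^2\big)\,d\bfx=-\tfrac12\int_\Omega(\nabla\cdot\bfb)\,(f^\eps)^2\,d\bfx=0$, the boundary contributions being killed by the conditions of \emph{Hypothesis A} and $\nabla\cdot\bfb=0$. This yields the exact conservation $\|f^\eps(t)\|_{L^2(\Omega)}=\|f^\eps_{in}\|_{L^2(\Omega)}$, hence a bound in $L^\infty(0,T;L^2(\Omega))$ independent of $\eps$. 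For well-posedness, the growth condition $|\bfb(\bfx)|\le C(1+|\bfx|)$ together with $\bfb\in W^{1,\infty}_{loc}(\RR^2)$ guarantees via Cauchy--Lipschitz and a Gr\"onwall argument that the characteristic flow $Z(\cdot;\bfx)$ is globally defined and, by $\nabla\cdot\bfb=0$, measure-preserving; the weak solution $f^\eps(t,\bfx)=f_{in}(Z(-t/\eps;\bfx))$ is then an $L^2$-isometry and is unique (alternatively, $\tfrac1\eps\mathcal{T}$ generates a unitary group by Stone's theorem). The limit problem $(V)^0$ is trivially well-posed with $f^0(t)\equiv\langle f_{in}\rangle$.

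Next I would identify the limit. By Banach--Alaoglu a subsequence satisfies $f^\eps\converge{}g$ in $L^\infty(0,T;L^2(\Omega))$. Rewriting $(V)^\eps$ as $\bfb\cdot\nabla f^\eps=-\eps\,\partial_t f^\eps$ and passing to the limit in $\mathcal{D}'$ forces $\bfb\cdot\nabla g=0$, i.e. $g(t)\in\ker\mathcal{T}$ and $g=\langle g\rangle$ for a.e. $t$. To recover the evolution I would test $(V)^\eps$ against a time-dependent $\chi$ with $\chi(t,\cdot)\in\ker\mathcal{T}$ and $\chi(T,\cdot)=0$: the stiff term drops out since $\int_\Omega(\bfb\cdot\nabla f^\eps)\,\chi\,d\bfx=-\int_\Omega f^\eps\,(\bfb\cdot\nabla\chi)\,d\bfx=0$, leaving $\int_0^T\!\!\int_\Omega f^\eps\,\partial_t\chi\,d\bfx\,dt+\int_\Omega f_{in}\,\chi(0,\cdot)\,d\bfx=0$. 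Passing to the limit, using $\int_\Omega f_{in}\chi=\int_\Omega\langle f_{in}\rangle\chi$ for $\chi\in\ker\mathcal{T}$ (self-adjointness of the projection $\langle\cdot\rangle$), and $g\in\ker\mathcal{T}$, identifies $\partial_t g=0$ with $g(0)=\langle f_{in}\rangle$, hence $g=f^0$. Uniqueness of $(V)^0$ then promotes the subsequential limit to convergence of the whole family, which is the claimed weak-$\star$ statement.

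Finally, for well-prepared data I would exploit a Hilbert-space identity. Since $f^0_{in}\in\ker\mathcal{T}$, skew-symmetry gives $\tfrac{d}{dt}\langle f^\eps(t),f^0_{in}\rangle=-\tfrac1\eps\langle\bfb\cdot\nabla f^\eps,f^0_{in}\rangle=\tfrac1\eps\langle f^\eps,\bfb\cdot\nabla f^0_{in}\rangle=0$, so the cross term is constant in time; combined with the norm conservation of $f^\eps$ and of $f^0(t)\equiv f^0_{in}$ this produces the striking identity
\[
\|f^\eps(t)-f^0(t)\|_{L^2(\Omega)}^2=\|f^\eps_{in}-f^0_{in}\|_{L^2(\Omega)}^2\qquad\text{for every }t,
\]
so that $f^\eps\convergebis{}f^0$ in $L^\infty(0,T;L^2(\Omega))$ as soon as $f^\eps_{in}\convergebis{}f^0_{in}$ in $L^2(\Omega)$. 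The main obstacle is the limit identification in the third step: justifying that test functions in $\ker\mathcal{T}$ suffice to pin down both the reduced evolution $\partial_t g=0$ and, crucially, the \emph{corrected} initial datum $\langle f_{in}\rangle$ rather than $f_{in}$. This rests entirely on the properties of the averaging operator $\langle\cdot\rangle$ recalled earlier (that $\langle\bfb\cdot\nabla q\rangle=0$ and that $\langle\cdot\rangle$ is the $L^2$-orthogonal projection onto $\ker\mathcal{T}$), whose well-definedness under the stated regularity of $\bfb$ is the content of the cited reference.
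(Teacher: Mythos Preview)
The paper does not provide its own proof of this theorem: the statement is explicitly attributed to the reference~\cite{mb} (Bostan), and no argument is given in the text beyond the heuristic Hilbert-expansion computation that precedes it. There is therefore nothing to compare against at the level of proof strategy.

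That said, your outline is a sound and standard route to the result, and it is essentially the method of~\cite{mb}: $L^2$-conservation from the skew-symmetry of $\mathcal{T}=\bfb\cdot\nabla$ gives the uniform bound, testing the equation multiplied by $\eps$ against arbitrary $\phi$ forces $\bfb\cdot\nabla g=0$ for any weak-$\star$ cluster point, and testing the original equation against $\chi(t,\cdot)\in\ker\mathcal{T}$ kills the stiff term and pins down $\partial_t g=0$ with the projected datum $\langle f_{in}\rangle$. The only place to be careful is the strong-convergence step: your differential identity $\tfrac{d}{dt}\langle f^\eps,f^0_{in}\rangle=0$ implicitly uses enough regularity on $f^0_{in}$ to integrate by parts, whereas the hypotheses only say $f^\eps_{in}$ is smooth and $f^0_{in}\in\ker\mathcal{T}\cap L^2$. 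A cleaner way to reach the same conclusion, without any smoothness on $f^0_{in}$, is to note that $U^\eps(t):=e^{-t\mathcal{T}/\eps}$ is a unitary group on $L^2(\Omega)$ (this is exactly your $L^2$-conservation) and that $U^\eps(t)f^0_{in}=f^0_{in}$ since $f^0_{in}\in\ker\mathcal{T}$; then
\[
\|f^\eps(t)-f^0(t)\|_{L^2}=\|U^\eps(t)f^\eps_{in}-U^\eps(t)f^0_{in}\|_{L^2}=\|f^\eps_{in}-f^0_{in}\|_{L^2},
\]
which is your identity obtained in one line.
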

\subsection{Micro-Macro reformulation}

The design of a multiscale numerical procedure for the resolution of problem \eqref{ADV_bis} is now inspired by the asymptotic study performed in Section \ref{SUB21}. To recover the missing microscopic information in the reduced model \eqref{Red}, we shall decompose $f^\eps$ into a macroscopic and a microscopic part, as follows
\be \label{DECOMP}
f^\eps=p^\eps + \eps\, q^\eps\,, \quad \textrm{with} \,\,\, \bfb  \cdot \nabla f^\eps= \eps \,{\bfb}\cdot \nabla q^\eps\,.
\ee
This signifies that $p^\eps$ belongs to the kernel of the dominant operator $\mathcal{T} = \bfb  \cdot \nabla$ and is considered as the macroscopic part. This decomposition is not unique as one has still to fix the values of $p^\eps$ or equivalently $q^\eps$ on the field-lines, fact which shall be done in the next subsections.\\
Plugging for the moment \eqref{DECOMP} into \eqref{ADV_bis} leads to the following augmented system for the two unknowns $(f^\eps,q^\eps)$
\be \label{ADV_ref}
\left\{
\begin{array}{l}
\ds \partial_t f^\eps + {\bfb} \cdot \nabla q^\eps =0\,, \qquad \forall (t,\bfx)\in (0,T) \times \Omega\,,\\[3mm]
\ds \bfb  \cdot \nabla f^\eps= \eps\, \bfb \cdot \nabla q^\eps\,, \qquad \forall (t,\bfx)\in (0,T) \times \Omega\,,
\end{array}
\right.
\ee
associated with the initial condition $f^\eps(0,\cdot)=f_{in}$ and adequate boundary conditions ({\it Hypothesis A}). Now several possibilities are conceivable to fix the values of $q^\eps$ on the field-lines, fact which is nothing else than rendering the decomposition \eqref{DECOMP} unique. Let us observe here that the values of $q^\eps$ on these lines are of no importance for the computation of our physical unknown $f^\eps$, as only ${\bfb} \cdot \nabla q^\eps$ is occurring in the system \eqref{ADV_ref}. Thus any arbitrary choice could do the work.

\subsection{Zero mean value} \label{SEC21}
From a purely mathematical point of view, one first idea is to fix the average of $q^\eps$ along the field lines of $\bfb$, by enforcing zero mean, {\it i.e.}
\be \label{mean}
\langle q^\eps \rangle=0\,.
\ee
Imposing \eqref{mean} can be done by slightly changing the system, adding an additional ``subtle'' term, with $\sigma \in \RR$ an arbitrary constant, namely
\be \label{ADV_mean}
\left\{
\begin{array}{l}
\ds \partial_t f^\eps + {\bfb} \cdot \nabla q^\eps =0\,, \\[3mm]
\ds \bfb  \cdot \nabla f^\eps= \eps \, \bfb\cdot \nabla q^\eps - \sigma\, \langle q^\eps \rangle\,.
\end{array}
\right.
\ee
Indeed, one can remark immediately that taking the average of the second equation over the field-lines yields automatically the constraint $\langle q^\eps \rangle=0$. The new introduced term is hence a tricky zero, rendering $q^\eps$ unique by fixing its average values along the $\bfb$-lines to zero. One can show now that \eqref{ADV_mean} is completely equivalent to \eqref {ADV_bis}, for each $\eps >0$. Indeed, two ingredients help to prove this equivalence between both formulations. On one hand, for given $f^\eps$ and $\eps >0$, the  equation
$$
\left\{
\begin{array}{l}
\ds {\bfb} \cdot \nabla q^\eps = {1 \over \eps } {\bfb} \cdot \nabla f^\eps,\\[3mm]
\ds \langle q^\eps \rangle =0\,,
\end{array}
\right.
$$
has a unique solution $q^\eps$. On the other hand, the second equation in \eqref{ADV_mean} yields immediately, as mentioned above, the constraint $\langle q^\eps \rangle=0$.\\

This idea is very nice from a mathematical point of view, however, if one is thinking at the numerical implementation, one has to average over the field lines of $\bfb$, in order to discretize the new term $\sigma\, \langle q^\eps \rangle$ in the second equation of \eqref{ADV_mean}. This procedure is rather hard (we are working on Cartesian grids with not-aligned fields $\bfb$) and can introduce moreover $\eps$-dependent error terms in the results. Thus we shall leave this idea behind, and search for a more practical one.
\subsection{Regularization} \label{SEC22}
In order to render $q^{\epsilon}$ unique in \eqref{ADV_ref}, one can imagine to use a regularization technique. Regularization is a very broad field in mathematics, and is devoted to the design and analysis of methods for obtaining stable solutions of ill-posed problems. In particular, the usual regularization technique consists in replacing the ill-posed problem by a nearby (slightly-perturbed) well-posed problem, whose resolution poses no difficulties (uniqueness, stability of the solution). The original solution is recovered only in the limit of vanishing regularization/perturbation parameter. The choice of the perturbation term as well as the strength of the perturbation parameter is essential and constitutes the key point of the method. There is a rich literature on regularization techniques, we refer the interested reader to the references \cite{bochev,brezin,calvetti,engl}.\\

Coming now to our problem, one can regularize \eqref{ADV_ref} either by adding on the left of the second equation a term of the type ``$\alpha\, \partial_t q^\eps$'' or of the form ``$\sigma \, q^\eps$''. These two regularizations permit to get a unique $q^\eps$, however the corresponding solutions behave very differently in the desired asymptotic limit $\eps \rightarrow 0$. To see this difference, let us simplify by putting formally $\eps=0$ in \eqref{ADV_ref} and take a look at both regularizations, namely
\be \label {REG}
(W)\,\,\, \left\{
\begin{array}{l}
\ds \partial_t f^0 + \partial_x q^0 =0\,, \\[3mm]
\ds \alpha\, \partial_t q^0 + \partial_x f^0 =0\,,
\end{array}
\right.
\quad \quad \qquad 
(P)\,\,\,\left\{
\begin{array}{l}
\ds \partial_t f^0 + \partial_x q^0 =0\,, \\[3mm]
\ds \partial_x f^0 + \sigma q^0 =0\,,
\end{array}
\right.
\ee
where the constants $\alpha >0$ resp. $\sigma >0$ have to be sufficiently small in order not to perturb too much the original problem. Now, one can eliminate in both systems the auxiliary unknown $q^0$ and get an equation involving only $f^0$, which reads
$$
(W)\,\,\, \partial_{tt} f^0 - {1 \over \alpha} \partial_{xx} f^0=0\,; \qquad \quad (P)\,\,\, \partial_{t} f^0 - {1 \over \sigma} \partial_{xx} f^0=0\,.
$$
As one can observe,  regularizing \eqref{ADV_ref} by adding a term of the form ``$\alpha \partial_t q^\eps$'' leads in the limit $\eps \rightarrow 0$ to a wave-equation, whereas the regularization by adding ``$\sigma q^\eps$'' leads to a parabolic equation.\\

Which one is better suited for our anisotropic transport problem can be understood by remembering the asymptotic behaviour of the unique solution $f^\eps$ of the original advection equation \eqref{ADV_bis} as $\eps$ becomes smaller and smaller. As shown in Section \ref{SUB21}, one gets in the (weak) limit $\eps \rightarrow 0$ a function $f^0$ which is constant along the field lines of $\bfb$. This gives us a hint that the regularization $(P)$ is better suited for our problem, as the corresponding limit problem is a diffusion problem, with very strong diffusivity along the field lines. Hence one is expecting to get a better approximation of $f^0$ via $(P)$ than via a wave-equation of the form $(W)$, which describes rather a very rapid wave-motion.
\subsection{The stabilized AP-reformulation} \label{SEC23}
To summarize, our Asymptotic-Preserving scheme for an efficient resolution of the anisotropic transport equation \eqref{ADV_bis} is based on the resolution of the following reformulated system
\be \label{ADV_AP}
(MM)_\eps^{\sigma}\,\,\, \left\{
\begin{array}{l}
\ds \partial_t f^{\eps,\sigma} + {\bfb} \cdot \nabla q^{\eps,\sigma} =0\,, \qquad \forall (t,\bfx) \in (0,T) \times \Omega\,, \\[3mm]
\ds \bfb  \cdot \nabla f^{\eps,\sigma}= \eps \, \bfb \cdot \nabla q^{\eps,\sigma} - \sigma\, q^{\eps,\sigma}\,, \qquad \forall (t,\bfx) \in (0,T) \times \Omega\,,
\end{array}
\right.
\ee
with $\sigma >0$ a small parameter to be fixed later on.
This system is completed by an initial condition $f^{\eps,\sigma}(0,\cdot)=f_{in}(\cdot)$ and adequate boundary conditions ({\it Hypothesis A}). Let us underline here the difference between \eqref{ADV_mean} and \eqref{ADV_AP}. Both procedures are fixing the value of the auxiliary variable $q$ on the field lines of $\bfb$ by imposing zero mean $\langle q \rangle =0$. To see this in \eqref{ADV_AP}, it is enough to take the average $\langle \cdot \rangle$ of the second equation.
However, while \eqref{ADV_mean} is completely equivalent to the starting model \eqref{ADV_bis}, the system \eqref{ADV_AP} introduces an error, as the supplementary term we introduced, $\sigma q^{\eps,\sigma}$, is no more zero but contains also the non-zero fluctuation part of $q^{\eps,\sigma}$. The big advantage of \eqref{ADV_AP} with respect to \eqref{ADV_mean} is that this time we have no more to discretize the average procedure $\langle \cdot \rangle$.
\\ 

The $\eps$-regularity of the system \eqref{ADV_AP} allows now to pass directly to the $\eps \rightarrow 0$ limit in \eqref{ADV_AP} to get the corresponding limit model, {\it i.e.}

\be \label{ADV_AP_LIM}
(MM)_0^{\sigma}\,\,\, \left\{
\begin{array}{l}
\ds \partial_t f^{0,\sigma} + {\bfb} \cdot \nabla q^{0,\sigma} =0\,, \\[3mm]
\ds \bfb  \cdot \nabla f^{0,\sigma}+ \sigma\, q^{0,\sigma}=0\,.
\end{array}
\right.
\ee
Eliminating $q^{0,\sigma}$ from this system, yields the degenerate diffusion equation
\be \label{Diff}
\begin{array}{l}
\ds \partial_t f^{0,\sigma}- { 1 \over \sigma}\, \nabla \cdot \left[ (\bfb \otimes \bfb) \, \nabla f^{0,\sigma} \right]=0\,,
\end{array}
\ee
which shows clearly what the regularization term is doing in the limit $\eps \rightarrow 0$. For future numerical discretizations, it will be more convenient to rewrite these systems by using the Poisson bracket. Introducing the stream function $\Psi$ such that  $\bfb = (\partial_y \Psi, -\partial_x \Psi)=^\perp\! \nabla \Psi$, the previous Micro-Macro system \eqref{ADV_AP}  reads 

\be \label{ADV_AP_poisson}
(MM)_\eps^{\sigma}\,\,\, \left\{
\begin{array}{l}
\ds \partial_t f^{\eps,\sigma} +\{q^{\eps,\sigma},\Psi \} =0\,, \\[3mm]
\ds \{ f^{\eps,\sigma},\Psi \} = \eps\,\{q^{\eps,\sigma},\Psi \}  - \sigma\, q^{\eps,\sigma}\,.
\end{array}
\right.
\ee

Before introducing the numerical discretization of the AP-reformulation \eqref{ADV_AP_poisson}, let us first analyze the existence and uniqueness of a solution $(f^{\eps,\sigma},q^{\eps,\sigma})$ as well as the asymptotic properties of this solution, when $\epsilon \to 0$, $\sigma \rightarrow 0$, {\it etc}.
\section{Some mathematical observations} \label{SEC3}
The rigorous mathematical study of the existence and uniqueness of a solution to the AP-reformulation \eqref{ADV_AP} along with the rigorous study of its limit towards \eqref{ADV_AP_LIM} is a delicate question and shall be treated in a supplementary, more mathematical work. To give however some ideas about the well-posedness of this model, and underline the difficulties of the mathematical study, we shall concentrate in this paper only on the study of the implicit time semi-discretization of \eqref{ADV_AP}, namely
\be \label{ADV_AP_h}
(MM)_{\eps}^{\sigma,n}\,\,\, \left\{
\begin{array}{l}
\ds f^{\eps,\sigma,n+1} + \Delta t \;  {\bfb} \cdot \nabla q^{\eps,\sigma,n+1} =f^{\eps,\sigma,n}\,, \\[3mm]
\ds  \bfb  \cdot \nabla f^{\eps,\sigma,n+1}= \eps \; \bfb \cdot \nabla q^{\eps,\sigma,n+1} - \sigma \, q^{\eps,\sigma,n+1}.\,
\end{array}
\right.
\ee
Here, we discretized the time interval $[0,T]$ with $T>0$, as follows
$$
t^n := n \, \Delta t\,, \quad \quad \Delta t := T/N_t\,, \quad \quad n \in \llbracket0, N_t\rrbracket\,, \quad N_t \in \mathbb{N}\,,
$$
and denoted by $f^{\eps,\sigma,n}$ an approximation of $f^{\eps,\sigma}(t^n,\cdot)$.
This system is associated with boundary conditions for $f^{\eps,\sigma,n+1}$ and $q^{\eps,\sigma,n+1}$, following Hypothesis A. To be more precise, for a normed space $X$, we shall denote by $X_{bc}$ the space of all the functions of $X$, satisfying the boundary conditions precised in Hypothesis A.\\

Let us now specify the mathematical framework of problem \eqref{ADV_AP_h}. For this, choose firstly the Hilbert spaces $V = L^2(\Omega)$ and $Q = \left \{ v \in L^2_{bc}(\Omega), \; \; \bfb \cdot \nabla v \in L^2(\Omega)  \right \}$, associated with the standard $L^2$ scalar-product for $V$ and $(u,v)_{Q}:=(u,v)_{L^2}+( \mathbf{b} \cdot \nabla u,\mathbf{b} \cdot \nabla v)_{L^2}$ for $Q$. We introduce now the following bi-linear forms $\mathcal{A}$, $\mathcal{B}$, $\mathcal{C}_{\eps,\sigma}$:
\begin{equation} \label{Bil}
\begin{array}{llll}
\ds \mathcal{A}(u,v) &:=& \ds \int_{\Omega} uv d \bfx, &\qquad \ds \mathcal{A}: V \times V \rightarrow \RR\,,  \\[3mm]
\ds \mathcal{B}(v,r) &:=&\ds  \int_{\Omega} v\, (\bfb \cdot{\nabla} r) d \bfx, &\qquad\ds \mathcal{B}: V \times Q \rightarrow \RR\,,  \\[3mm]
\ds \mathcal{C}_{\eps,\sigma}(r,s) &:=&\ds - \eps \;  \mathcal{B}(s,r) + \; \sigma\, (r,s)_{L^2}, &\qquad\ds \mathcal{C}_{\eps,\sigma}: Q \times Q \rightarrow \RR\,,
\end{array}
\end{equation}
and their associated linear operators $A$, $B$, $C_{\eps,\sigma}$:
$$
A : V \longrightarrow V^{\star}\,, \quad B : V \longrightarrow Q^{\star}\,, \quad C_{\eps,\sigma} : Q \longrightarrow Q^{\star}\,, 
$$
$$
\langle Au,v \rangle_{V^{\star},V}:=\mathcal{A}(u,v)\,, \quad \langle Bv,r \rangle_{Q^{\star},Q}:=\mathcal{B}(v,r)\,, \quad \langle C_{\eps,\sigma}r,s \rangle_{Q^{\star},Q}:=\mathcal{C}_{\eps,\sigma}(r,s) \,.
$$
\begin{remark}
The bi-linear form $\mathcal{B}$ defines also the adjoint linear operator $B^{\star} : Q \longrightarrow V^{\star}$ via $\mathcal{B}(v,r)=\langle Bv,r \rangle_{Q^{\star},Q}=\langle v,B^{\star}r \rangle_{V,V^{\star}}$ for all $(v,r) \in V \times Q$. Observe also that 
$B^{\star} r=  \bfb \cdot \nabla r$ for all $r \in Q$, whereas in the distributional sense $B v= - \bfb \cdot \nabla v$ for all $v \in V$.
\end{remark}
With these definitions, the variational formulation of the previous problem \eqref{ADV_AP_h} writes now : for fixed $\eps \ge 0,\, \sigma >0$, $\Delta t >0$ and $f^{\eps,\sigma,n} \in V^{\star}$, find $(f^{\eps,\sigma,n+1},q^{\eps,\sigma,n+1}) \in V \times Q$, such that :

\begin{equation}\label{VAR_FORM}
\left\{
\begin{array}{ll}
\ds \mathcal{A}(f^{\eps,\sigma,n+1},\theta) + \Delta t \, \mathcal{B}(\theta,q^{\eps,\sigma,n+1}) = \langle f^{\eps,\sigma,n}, \theta \rangle_{V^{\star} \times V}, \quad \forall \theta \in V, \\[3mm]
\ds \mathcal{B}(f^{\eps,\sigma,n+1},\chi) - \mathcal{C}_{\eps,\sigma}(q^{\eps,\sigma,n+1},\chi) = 0, \quad \forall \chi \in Q.
\end{array}
\right.
\end{equation}

To prove the existence and uniqueness of a weak solution to \eqref{VAR_FORM}, we shall need some properties of these operators.
\begin{lemme} Let Hypothesis A be satisfied. Then, $\mathcal{A}(\cdot,\cdot)$, $\mathcal{B}(\cdot,\cdot)$ resp. $\mathcal{C}_{\eps,\sigma}(\cdot,\cdot)$ are continuous bi-linear forms on $V\times V$, $V \times Q$ resp. $Q \times Q$. Furthermore, $\mathcal{A}(\cdot, \cdot)$ is coercive on $V \times V$ and $\mathcal{C}_{\eps,\sigma}(\cdot, \cdot)$ is positive semi-definite on $Q \times Q$.
\end{lemme}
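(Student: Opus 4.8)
The plan is to verify each claim directly from the definitions, the only genuinely non-trivial point being the skew-symmetry of the transport operator $\bfb\cdot\nabla$, which furnishes the positive semi-definiteness of $\mathcal{C}_{\eps,\sigma}$.

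First I would dispose of $\mathcal{A}$. By Cauchy--Schwarz, $|\mathcal{A}(u,v)|=\big|\int_\Omega uv\,d\bfx\big|\le\|u\|_{L^2}\|v\|_{L^2}=\|u\|_V\|v\|_V$, so $\mathcal{A}$ is continuous with constant $1$, and $\mathcal{A}(u,u)=\|u\|_{L^2}^2=\|u\|_V^2$ yields coercivity, again with constant $1$. For $\mathcal{B}$, the key observation is that for $r\in Q$ one has $\|\bfb\cdot\nabla r\|_{L^2}\le\|r\|_Q$ directly from the defining identity $\|r\|_Q^2=\|r\|_{L^2}^2+\|\bfb\cdot\nabla r\|_{L^2}^2$; hence $|\mathcal{B}(v,r)|\le\|v\|_{L^2}\,\|\bfb\cdot\nabla r\|_{L^2}\le\|v\|_V\|r\|_Q$, so $\mathcal{B}$ is continuous on $V\times Q$. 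Continuity of $\mathcal{C}_{\eps,\sigma}$ then follows by combining these bounds: since $\|s\|_{L^2}\le\|s\|_Q$ one has $|\mathcal{B}(s,r)|\le\|s\|_Q\|r\|_Q$, and $|(r,s)_{L^2}|\le\|r\|_Q\|s\|_Q$, whence $|\mathcal{C}_{\eps,\sigma}(r,s)|\le(\eps+\sigma)\|r\|_Q\|s\|_Q$.

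The main step is positive semi-definiteness. Here I would compute $\mathcal{C}_{\eps,\sigma}(r,r)=-\eps\,\mathcal{B}(r,r)+\sigma\|r\|_{L^2}^2=-\eps\int_\Omega r\,(\bfb\cdot\nabla r)\,d\bfx+\sigma\|r\|_{L^2}^2$ and show that the diagonal term of $\mathcal{B}$ vanishes. Since $\nabla\cdot\bfb=0$, one has $r\,(\bfb\cdot\nabla r)=\tfrac12\,\bfb\cdot\nabla(r^2)=\tfrac12\,\nabla\cdot(\bfb\,r^2)$, so by the divergence theorem $\int_\Omega r\,(\bfb\cdot\nabla r)\,d\bfx=\tfrac12\int_{\partial\Omega}r^2\,\bfb\cdot\mathbf{n}\,d\gamma$. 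Under the boundary conditions of Hypothesis A this boundary integral vanishes (on the strip, periodicity in $x$ makes the two vertical contributions cancel and one uses decay as $|y|\to\infty$; on the whole plane one uses decay as $|\bfx|\to\infty$), so that $\mathcal{B}(r,r)=0$ and consequently $\mathcal{C}_{\eps,\sigma}(r,r)=\sigma\|r\|_{L^2}^2\ge0$ for every $r\in Q$ and every $\eps\ge0$, $\sigma>0$. Note that this gives only semi-definiteness, not coercivity, on $Q$, since $\sigma\|r\|_{L^2}^2$ does not control the component $\|\bfb\cdot\nabla r\|_{L^2}^2$ of $\|r\|_Q^2$ — which is consistent with the statement.

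The hard part will be rigorously justifying the vanishing of the boundary term on the unbounded domain $\Omega$, where the integration by parts above is formal for a generic $r\in Q$. I would make this precise by first establishing the skew-symmetry identity $\mathcal{B}(r,r)=0$ for smooth, compactly supported functions (respecting $x$-periodicity in the strip case), for which integration by parts is classical, and then extending to all of $Q$ by density, invoking the continuity of $\mathcal{B}$ already proved. The delicate structural point is that $Q$ endowed with its graph norm admits a dense subspace of such smooth functions compatible with the boundary conditions; this is precisely where the regularity and growth hypotheses on $\bfb$ and the geometry of $\Omega$ fixed in Hypothesis A are used.
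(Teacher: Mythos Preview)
The paper does not supply a proof of this lemma; it is stated as a routine preliminary and only the identity $\mathcal{C}_{\eps,\sigma}(q,q)=\sigma\|q\|_{L^2}^2$ is invoked later, inside the proof of the subsequent theorem. Your verification is correct and is precisely the standard argument one would expect: Cauchy--Schwarz for the continuity bounds, and skew-symmetry of $\bfb\cdot\nabla$ (via $\nabla\cdot\bfb=0$ and integration by parts, justified by density) for the vanishing of $\mathcal{B}(r,r)$, which yields $\mathcal{C}_{\eps,\sigma}(r,r)=\sigma\|r\|_{L^2}^2\ge 0$.
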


\begin{remark}
Let us remark here that $\mathcal{C}_{\eps,\sigma}(\cdot, \cdot)$ is not coercive on $Q \times Q$. However, as we will see later, this hypothesis is not crucial for both existence and uniqueness of a solution of the variational formulation \eqref{VAR_FORM}. 
\end{remark}

Without any other hypothesis on $\mathcal{B}(\cdot,\cdot)$ (as for example an inf-sup condition) we are now able to present the following existence/uniqueness  result of a solution to  \eqref{VAR_FORM}, and this due to the presence of the regularization term $\sigma q$.
\begin{theorem} 
Let Hypothesis A be satisfied and let $\mathcal{A}(\cdot,\cdot)$, $\mathcal{B}(\cdot,\cdot)$ and $\mathcal{C}_{\eps,\sigma}(\cdot,\cdot)$ be the continuous bi-linear forms defined in \eqref{Bil}. Then, for every $f^{\eps,\sigma,n} \in V^{\star}$ the problem \eqref{VAR_FORM} has for each fixed $\eps \ge 0$, $\sigma >0$, $\Delta t >0$ and $n\in \llbracket 0,N_t \rrbracket$, a unique weak solution $(f^{\eps,\sigma,n+1}, q^{\eps,\sigma,n+1})$ in $V \times Q$.

\noindent  This solution satisfies the following estimates, for all $ \eps \geqslant 0,\,\, \sigma >0,\,\, , \Delta t >0,\,\, n\in \llbracket 0,N_t \rrbracket$:
\be \label{thm_f}
||f^{\eps,\sigma,n+1}||_V  \le   ||f^{\eps,\sigma,n}||_{V^{\star}}\,,
\ee
\be \label{thm_q}
||q^{\eps,\sigma,n+1}||_V^2 \le \frac{1}{\sigma\, \Delta t}\,||f^{\eps,\sigma,n}||^2_{V^{\star}}\,, \quad ||\bfb \cdot \nabla q^{\eps,\sigma,n+1}||_{V}\le {2 \over \Delta t}\,||f^{\eps,\sigma,n}||_{V^{\star}}\,.
\ee
\end{theorem}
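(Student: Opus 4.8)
The plan is to avoid the full saddle-point (Brezzi) machinery — which is unavailable here since $\mathcal{C}_{\eps,\sigma}$ is only semi-definite and no inf-sup condition on $\mathcal{B}$ is assumed — and instead to eliminate $f^{\eps,\sigma,n+1}$ explicitly. Writing $g:=f^{\eps,\sigma,n}$ and using that $\mathcal{A}(\cdot,\cdot)=(\cdot,\cdot)_{L^2}$ is exactly the inner product of $V=L^2(\Omega)$, the operator $A$ is the Riesz isomorphism $V\to V^{\star}$; hence the first equation of \eqref{VAR_FORM} determines $f^{\eps,\sigma,n+1}=g-\Delta t\,\bfb\cdot\nabla q^{\eps,\sigma,n+1}$ in $L^2(\Omega)$ (identifying $g\in V^{\star}$ with its $L^2$-representative). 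Substituting this into the second equation reduces \eqref{VAR_FORM} to a single problem on $Q$: find $q:=q^{\eps,\sigma,n+1}\in Q$ with $a(q,\chi)=L(\chi)$ for all $\chi\in Q$, where $a(q,\chi):=\Delta t\,(\bfb\cdot\nabla q,\bfb\cdot\nabla\chi)_{L^2}-\eps\,(\chi,\bfb\cdot\nabla q)_{L^2}+\sigma\,(q,\chi)_{L^2}$ and $L(\chi):=\langle g,\bfb\cdot\nabla\chi\rangle_{V^{\star},V}$.

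\textbf{Well-posedness via Lax--Milgram.} I would then apply Lax--Milgram to this reduced problem. Continuity of $a$ on $Q\times Q$ and of $L$ on $Q$ follows directly from Cauchy--Schwarz together with $\|\bfb\cdot\nabla\chi\|_{L^2}\le\|\chi\|_Q$. The key point — where the regularization earns its keep — is coercivity: testing with $\chi=q$, the antisymmetric contribution $(q,\bfb\cdot\nabla q)_{L^2}=\mathcal{B}(q,q)$ vanishes because $\nabla\cdot\bfb=0$ and the boundary conditions of Hypothesis A annihilate the boundary flux (this is precisely the computation behind the positive semi-definiteness of $\mathcal{C}_{\eps,\sigma}$ asserted in the Lemma). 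Consequently the indefinite $\eps$-term drops out and $a(q,q)=\Delta t\,\|\bfb\cdot\nabla q\|_{L^2}^2+\sigma\,\|q\|_{L^2}^2\ge\min(\Delta t,\sigma)\,\|q\|_Q^2$, so $a$ is coercive for fixed $\Delta t,\sigma>0$. Lax--Milgram yields a unique $q\in Q$; setting $f:=g-\Delta t\,\bfb\cdot\nabla q\in V$ recovers the unique solution pair (uniqueness follows since homogeneous data forces $q=0$ and then $f=0$). I expect this coercivity step — recognizing that eliminating $f$ turns the merely semi-definite penalty $\sigma q$ into genuine $L^2$-control while $\mathcal{B}(q,q)=0$ kills the sign-indefinite $\eps$-term — to be the only real subtlety.

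\textbf{A priori estimates.} Finally I would derive \eqref{thm_f}--\eqref{thm_q} by energy testing on \eqref{VAR_FORM} directly. Taking $\chi=q^{\eps,\sigma,n+1}$ in the second equation and using $\mathcal{B}(q,q)=0$ gives $\mathcal{B}(f^{\eps,\sigma,n+1},q^{\eps,\sigma,n+1})=\mathcal{C}_{\eps,\sigma}(q^{\eps,\sigma,n+1},q^{\eps,\sigma,n+1})=\sigma\,\|q^{\eps,\sigma,n+1}\|_V^2$. Inserting this into the first equation with $\theta=f^{\eps,\sigma,n+1}$ produces the energy identity $\|f^{\eps,\sigma,n+1}\|_V^2+\Delta t\,\sigma\,\|q^{\eps,\sigma,n+1}\|_V^2=\langle f^{\eps,\sigma,n},f^{\eps,\sigma,n+1}\rangle_{V^{\star},V}\le\|f^{\eps,\sigma,n}\|_{V^{\star}}\,\|f^{\eps,\sigma,n+1}\|_V$. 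Dropping the nonnegative $q$-term gives \eqref{thm_f}; reinserting \eqref{thm_f} bounds $\Delta t\,\sigma\,\|q^{\eps,\sigma,n+1}\|_V^2\le\|f^{\eps,\sigma,n}\|_{V^{\star}}^2$, which is the first inequality of \eqref{thm_q}. The second inequality follows immediately from the strong form $f^{\eps,\sigma,n+1}+\Delta t\,\bfb\cdot\nabla q^{\eps,\sigma,n+1}=f^{\eps,\sigma,n}$: one has $\Delta t\,\|\bfb\cdot\nabla q^{\eps,\sigma,n+1}\|_V=\|f^{\eps,\sigma,n}-f^{\eps,\sigma,n+1}\|_V\le\|f^{\eps,\sigma,n}\|_{V^{\star}}+\|f^{\eps,\sigma,n+1}\|_V\le 2\,\|f^{\eps,\sigma,n}\|_{V^{\star}}$, using \eqref{thm_f} once more.
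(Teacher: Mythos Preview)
Your argument is correct, and it is genuinely different from the paper's proof. The paper does \emph{not} eliminate $f$; instead it further regularizes \eqref{VAR_FORM} by adding a term $\alpha(q,\chi)_Q$ to the second equation, which renders $\mathcal{C}_{\eps,\sigma}+\alpha(\cdot,\cdot)_Q$ coercive on $Q\times Q$, applies Lax--Milgram to the full $2\times 2$ regularized saddle-point system, derives the same energy identity (now carrying an extra $\alpha\,\Delta t\,\|q_\alpha\|_Q^2$ term) to obtain $\alpha$-uniform bounds, and finally passes to the limit $\alpha\to 0$ by weak compactness. Your approach shortcuts this: by substituting $f=g-\Delta t\,\bfb\cdot\nabla q$ you observe that the reduced bilinear form on $Q$ is \emph{already} coercive, because the elimination produces the good term $\Delta t\,\|\bfb\cdot\nabla q\|_{L^2}^2$ while $\mathcal{B}(q,q)=0$ kills the sign-indefinite $\eps$-contribution. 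This makes the auxiliary regularization and the compactness/limit step unnecessary. The paper's route is more in the spirit of perturbed saddle-point theory and would generalize more readily if $\mathcal{A}$ were not exactly the Riesz map of $V$; your route is more elementary and gives uniqueness directly without a separate argument. The a~priori estimates are obtained identically in both proofs (same test functions, same energy identity), except that the paper carries them through the $\alpha$-regularized solution first.
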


\begin{proof} 
Due to the lack of coercivity of $\mathcal{C}_{\eps,\sigma}$, we shall start by considering the regularized problem : for $\alpha>0$, find $f^{\eps,\sigma,n+1}_{\alpha} \in V$ and $q^{\eps,\sigma,n+1}_{\alpha} \in Q$, such that :

\begin{equation}\label{VAR_FORM_R}
\left\{
\begin{array}{ll}
\ds \mathcal{A}(f^{\eps,\sigma,n+1}_\alpha,\theta) + \Delta t\,\mathcal{B}(\theta,q^{\eps,\sigma,n+1}_\alpha) = \langle f^{\eps,\sigma,n}, \theta \rangle_{V^{\star} \times V}, \quad \forall \theta \in V, \\[3mm]
\ds \mathcal{B}(f^{\eps,\sigma,n+1}_\alpha,\chi) - \alpha(q^{\eps,\sigma,n+1}_\alpha,\chi)_{Q} - \mathcal{C}_{\eps,\sigma}(q^{\eps,\sigma,n+1}_\alpha,\chi) = 0, \quad \forall \chi \in Q.
\end{array}
\right.
\end{equation}
Multiplying the second equation by $\Delta t$ and subtracting both equations, one can show via Lax-Milgram theorem that \eqref{VAR_FORM_R} admits, for each fixed $\alpha>0$, $\eps \ge 0$, $\sigma >0$, $\Delta t >0$ and $n\in \llbracket 0,N_t \rrbracket$, a unique weak solution. Indeed, thanks to the term $\alpha(q^{\eps,\sigma,n+1}_\alpha,\chi)_{Q}$ in the second equation of \eqref{VAR_FORM_R}, the regularized operator $\mathcal{C}_{\eps,\sigma}(\cdot, \cdot) + \alpha(\cdot,\cdot)_{Q}$ is now coercive on $Q \times Q$.\\ 

Our aim is now to bound $f^{\eps,\sigma,n+1}_\alpha$ and $q^{\eps,\sigma,n+1}_\alpha$ uniformly in $\alpha$. Then, passing to the limit $\alpha \to 0$ in \eqref{VAR_FORM_R} would permit to conclude about both existence and uniqueness of a weak solution of \eqref{VAR_FORM}.\\

First, let us choose $\theta = f_{\alpha}^{\eps,\sigma,n+1}$ and $\chi= q_{\alpha}^{\eps,\sigma,n+1}$ in \eqref{VAR_FORM_R}, multiply the second equation with $\Delta t$ and subtract both equations to get
$$
 \mathcal{A}(f_{\alpha}^{\eps,\sigma,n+1},f_{\alpha}^{\eps,\sigma,n+1}) +\alpha\, \Delta t\, ||q_{\alpha}^{\eps,\sigma,n+1}||^2_Q + \Delta t\,\mathcal{C}_{\eps,\sigma}(q_{\alpha}^{\eps,\sigma,n+1},q_{\alpha}^{\eps,\sigma,n+1}) = \langle f^{\eps,\sigma,n}, f_{\alpha}^{\eps,\sigma,n+1} \rangle_{V^{\star} \times V}.
$$
By using the coercivity of $\mathcal{A}(\cdot,\cdot)$ and the fact that $\mathcal{C}_{\eps,\sigma}(q,q)= \sigma ||q||^2_{L^2}$, we get
$$
 ||f_{\alpha}^{\eps,\sigma,n+1}||^2_V + \alpha\, \Delta t\, ||q_{\alpha}^{\eps,\sigma,n+1}||^2_Q  + \Delta t\, \sigma \, ||q_{\alpha}^{\eps,\sigma,n+1}||^2_V   \le || f^{\eps,\sigma,n}||_{V^*}\,|| f_{\alpha}^{\eps,\sigma,n+1}||_V\,,
$$
leading to
\be \label{est_f}
||f_{\alpha}^{\eps,\sigma,n+1}||_V  \le ||f^{\eps,\sigma,n}||_{V^{\star}}\,, \qquad ||q_{\alpha}^{\eps,\sigma,n+1}||^2_V \le {1 \over \Delta t\, \sigma} \, ||f^{\eps,\sigma,n}||_{V^{\star}}^2\,.
\ee
Now, thanks to the first equation of \eqref{VAR_FORM_R}, {\it i.e.}
$$
\Delta t\, \mathcal{B}(\theta, q^{\eps,\sigma,n+1}_{\alpha}) = - \mathcal{A}(f^{\eps,\sigma,n+1}_\alpha,\theta) + \langle f^{\eps,\sigma,n}, \theta \rangle_{V^* \times V}, \quad \forall \theta \in V\,,
$$
we have
\be \label{est_q}
||\bfb \cdot \nabla q^{\eps,\sigma,n+1}_{\alpha}||_{L^2(\Omega)}= \sup_{\theta \in L^2(\Omega)} {(\bfb \cdot \nabla q^{\eps,\sigma,n+1}_{\alpha},\theta)_{L^2(\Omega)} \over ||\theta||_{L^2(\Omega)}}\le {2 \over\Delta t}\,||f^{\eps,\sigma,n}||_{V^{\star}}\,.
\ee

The estimates \eqref{est_f} as well as \eqref{est_q} are independent on $\alpha>0$ and thus one can extract weakly convergent subsequences and pass to the limit $\alpha \rightarrow 0$ in the variational formulation \eqref{VAR_FORM_R} to conclude the proof.
\end{proof}

\begin{remark}
One can also observe from the $\eps$-independent estimates \eqref{thm_f}-\eqref{thm_q} that up to extracting a subsequence of $\{f^{\eps,\sigma,n+1},q^{\eps,\sigma,n+1} \}_{\eps >0}$ we have the $\eps$-convergences
$$
 f^{\eps,\sigma,n+1} \convergess{} f^{0,\sigma,n+1} \quad \text{in} \;  V\,, \qquad q^{\eps,\sigma,n+1} \convergess{} q^{0,\sigma,n+1} \quad \text{in} \;  Q\,,
$$
which underlines the fact that the AP-reformulation \eqref{ADV_AP} is regular and tends towards the limit-model  \eqref{ADV_AP_LIM} as $\eps$ goes to zero.\\
The $\sigma$-convergences are more delicate. One has only
$$
 f^{\eps,\sigma,n+1} \converges{} f^{\epsilon,0,n+1} \quad \text{in} \;  V\,,
$$
however there is no convergence for $q^{\eps,\sigma,n+1}$. This again was to be expected as in the limit $\sigma \rightarrow 0$ one looses the uniqueness of $q$, the term $\sigma q$ being a regularization term.
\end{remark}

\section{Numerical discretization} \label{SEC4}

Let us come now to the numerical part of our work and introduce here a numerical discretization of our reformulated system:

\be \label{NR}
(MM)_\eps^{\sigma}\,\,\, \left\{
\begin{array}{l}
\ds \partial_t f^{\eps,\sigma} + \{ q^{\eps,\sigma}, \Psi^{\eps,\sigma} \} =0\,, \\[3mm]
\ds \{ f^{\eps,\sigma}, \Psi^{\eps,\sigma} \}= \eps \, \{ q^{\eps,\sigma}, \Psi^{\eps,\sigma} \} - \sigma\, q^{\eps,\sigma}\,,
\end{array}
\right.
\ee

\noindent where the stream function $\Psi^{\eps,\sigma}$ is supposed to be given in this section, linked to the given vector field ${\bfb}$ through ${\bfb}:=\nabla^{\bot} \Psi^{\eps,\sigma}$.

\subsection{Discretization parameters} \label{SEC41}

In what follows, one assumes a bounded simulation domain $\Omega_S := [-L_x, L_x] \times [-L_y, L_y]$\,. Concerning the time interval $[0,T]$\,, $T>0$\,, we employ the discretization:

\begin{equation*}
t^n := n \, \Delta t\,, \quad \quad \Delta t := T/N_t\,, \quad \quad n \in \llbracket0, N_t\rrbracket\,, \quad N_t \in \mathbb{N}\,.
\end{equation*}

\noindent Similarly, let us supply the domain $\Omega_S$ with a uniform spatial discretization:
\begin{equation*}
x_i := (i-1) \Delta x -L_x\,, \quad  y_j := (j-1) \Delta y -L_y\,, \quad  \Delta x := 2\, L_x/N_x\,,  \quad \Delta y := 2\, L_y/N_y\,,
\end{equation*}
\noindent with $i \in \llbracket1, N_x+1\rrbracket$\,, and $j \in \llbracket1,N_y+1\rrbracket$\,. For any function $f : [0, T] \times \Omega_S \to \mathbb{R}$\,, $f_{i,j}^n$ refers to the numerical approximation of $f(t^n, x_i,y_j)$, and $f_h^n$ shall simply denote the discrete grid-function $(f^n_{i,j})_{i,j}$\,.\\

The domain $\Omega_S$ is a truncation of the real physical domain $\Omega=\RR^2$ or $\Omega=(L_1,L_2) \times \RR$. To be close to the physical reality, we took in our simulations a sufficiently large bounded domain $\Omega_S$ and supposed that the distribution function $f^{\eps,\sigma}$ is vanishing on the truncated infinite boundary, whereas on the other boundary, periodic boundary conditions are imposed. To be more precise, we imposed for the truncation of the physical domain $\Omega=\RR^2$ that
$$
f_{ij}^n=0\,\, \textrm{for}\,\, i=1;\, j=1; \, i=N_x+1;\, j=N_y+1\,,
$$
whereas for the truncation of the physical domain $\Omega=(L_1,L_2) \times \RR$ we imposed
$$
f_{ij}^n=0\,\,\, \forall i\,\, \textrm{and}\,\, j=1; \, j=N_y+1\,\, \quad \textrm{as well as} \quad f_{1,j}^n=f_{N_x+1,j}^n\,\,\, \forall j\,.
$$

\subsection{Space semi-discretization} \label{SEC42}
For the Poisson brackets appearing in the Micro-Macro reformulation \eqref{NR}, let us adopt the second order Arakawa discretization \cite{ara}. For two functions $u, v : \Omega_S \rightarrow \mathbb{R}$, the discrete version of the Poisson bracket $\{u,v\}$ calculated at the point $(x_i,y_j)$ is expressed by:

\begin{align*}
[u_h,v_h]_{i,j}:= \frac{1}{12 \Delta x \Delta y} \Big(u_{i+1,j}\,\mathcal{A}_{i,j} +u_{i-1,j}\,\mathcal{B}_{i,j}+u_{i,j+1}\,\mathcal{C}_{i,j}+u_{i,j-1}\,\mathcal{D}_{i,j} \\[3mm] +u_{i+1,j+1}\,\mathcal{E}_{i,j}+u_{i-1,j-1}\,\mathcal{F}_{i,j}+u_{i-1,j+1}\,\mathcal{G}_{i,j} + u_{i+1,j-1}\,\mathcal{H}_{i,j}\Big)\,.
\end{align*}

\noindent where the coefficients write

\begin{align*}
&\mathcal{A}_{i,j} := v_{i,j+1} - v_{i,j-1} + v_{i+1,j+1} - v_{i+1,j-1}\,, \quad \quad \mathcal{E}_{i,j} := v_{i,j+1} - v_{i+1,j}\,, \\[3mm]
&\mathcal{B}_{i,j} := v_{i,j-1} -v_{i,j+1} - v_{i-1,j+1} + v_{i-1,j-1}\,, \quad \quad \mathcal{F}_{i,j} := v_{i,j-1} -v_{i-1,j}\,, \\[3mm]
&\mathcal{C}_{i,j} := v_{i-1,j} -v_{i+1,j} - v_{i+1,j+1} + v_{i-1,j+1}\,, \quad \quad \mathcal{G}_{i,j} := v_{i-1,j} -v_{i,j+1}\,, \\[3mm]
&\mathcal{D}_{i,j} := v_{i+1,j} - v_{i-1,j} + v_{i+1,j-1} - v_{i-1,j-1}\,, \quad \quad \mathcal{H}_{i,j} := v_{i+1,j} - v_{i,j-1}\,.
\end{align*}

Thus, the semi-discretization in space of the Micro-Macro problem \eqref{NR} reads:

\begin{equation} \label{MM_scheme_space} 
(MM)_{\eps,h}^{\sigma}\,\,\, \left\{
\begin{array}{ll}
\ds \partial_t f_{i,j}^{\eps,\sigma} +[q_h^{\eps,\sigma},\Psi_h^{\eps,\sigma} ]_{i,j} =0\,, \\[3mm]
\ds [f_{h}^{\eps,\sigma},\Psi_{h}^{\eps,\sigma}]_{i,j}-\eps \,  [q_{h}^{\eps,\sigma},\Psi_{h}^{\eps,\sigma}]_{i,j} +\sigma \, q_{i,j}^{\eps,\sigma}\,=0\,.
\end{array}
\right.
\end{equation}

\subsection{Time discretization} \label{SEC43}
We shall use a DIRK (diagonally-implicit Runge-Kutta) approach in order to achieve second-order accuracy in time for the problem \eqref{NR}.  The general form of a RK-method is recalled here for the following equation
$$
\partial_t u  = Lu\,,
$$

\noindent  where $L$ refers to some differential operator. An r-stage Runge-Kutta approach is determined by its Butcher table

\begin{center}
\begin{tabular}{c|ccc}
$c_1$ & $a_{11}$ & $\ldots$ & $a_{1r}$\\
$\vdots$ & $\vdots$ & & $\vdots$ \\
$c_r$ & $a_{r1}$ & $\ldots$ & $a_{rr}$\\
\hline
&$b_1$ & $\ldots$ & $b_r$
\end{tabular}
\end{center}

\noindent For a given $u^{n}$, the subsequent $u^{n+1}$ is defined by the formula
$$
u^{n+1}  = u^n + \Delta t \sum_{j=1}^r b_j u_j\,,
$$
where each $u_i$ is defined by
$$
u_i = u^n + \Delta t \sum_{j=1}^r a_{ij} L u_j\,.
$$
\noindent In the case where $b_j = a_{rj}$ for $j=1,\,...\,,r$\,, then $u^{n+1}$ is equal to the last stage of the method, namely $u_r$. For the Micro-Macro problem \eqref{NR}, we consider the following 2-stage Butcher table

\begin{center}
\begin{tabular}{c|ccc}
$\lambda$ & $\lambda$  & $0$\\
$1$ & $1-\lambda$ & $\lambda$\\
\hline
&$1-\lambda$  & $\lambda$\,
\end{tabular}
\end{center}

\noindent For $\lambda := 1- 1/\sqrt{2}$, the method is $L-stable$. For all $n \in \llbracket0, N_t\rrbracket$, the full discretization of the Micro-Macro problem \eqref{NR} writes now

\begin{equation} \label{MM_scheme_full} 
(MM)_{\eps,h}^{\sigma,n}\,\,\, \left\{
\begin{array}{ll||}
\text{{Stage 1 :}} \\[3mm]
\ds f_{1,i,j}^{\eps,\sigma} +\lambda\,\Delta t\,[q_{1,h}^{\eps,\sigma},\Psi_{h}^{\eps,\sigma} ]_{i,j} =f^{\eps,\sigma,n}_{i,j}\,, \\[3mm]
\ds [f_{1,h}^{\eps,\sigma},\Psi_{h}^{\eps,\sigma}]_{i,j}-\eps \,[q_{1,h}^{\eps,\sigma},\Psi_{h}^{\eps,\sigma}]_{i,j} +\sigma\, q_{1,i,j}^{\eps,\sigma}\,=0\,.
\\[3mm]
\text{{Stage 2 :} } \\[3mm]
\ds f_{2,i,j}^{\eps,\sigma} +\lambda\,\Delta t\,[q_{2,h}^{\eps,\sigma},\Psi_{h}^{\eps,\sigma}]_{i,j}=f^{\eps,\sigma,n}_{i,j}\,+ \frac{1-\lambda}{\lambda}\,( f_{1,i,j}^{\eps,\sigma} -  f_{i,j}^{\eps,\sigma,n})\,, \\[3mm]
\ds [f_{2,h}^{\eps,\sigma},\Psi_{h}^{\eps,\sigma}]_{i,j} -\eps \,[q_{2,h}^{\eps,\sigma},\Psi_{h}^{\eps,\sigma}]_{i,j}  +\sigma \, q_{2,i,j}^{\eps,\sigma}\,=0\, . \\[3mm]  
\text{{Final Stage :} } \\[3mm]
(f_{i,j}^{\eps,\sigma,n+1}, q_{i,j}^{\eps,\sigma,n+1}) = (f_{2,i,j}^{\eps,\sigma}, q_{2,i,j}^{\eps,\sigma})\,.
\end{array}
\right.
\end{equation}

\begin{remark}
In the following, we shall simply call our Micro-Macro scheme \eqref{MM_scheme_full}, obtained with Arakawa space discretization and DIRK time discretization, the (DAMM)-scheme.
\end{remark}

\section{Verification of the AP-scheme in a mathematical test case} \label{SEC5}

In this section we investigate the numerical properties of our asymptotic-preserving (DAMM)-scheme \eqref{MM_scheme_full} for the resolution of \eqref{NR} with given field $\mathbf{b}$. The section is devoted to the investigation of a linear case where the stream function $\Psi$ does not depend on $f^{\epsilon}$ and is static. In mind, we have as an application the Vlasov/Fokker-Planck equation with strong given magnetic field \eqref{Boltz}. Convergence results regarding the discretization parameters and numerical study of the asymptotic limit $\epsilon \to 0$ are presented. Moreover, the influence of the stabilization parameter $\sigma$ on the numerical results is discussed.

\subsection{Analytical solution for both $\eps$-regimes} \label{SEC51}

Let us choose in this section the stationary stream function $\ds \Psi(x,y) = \frac{1}{2}(x^2+y^2)$, corresponding to $\bfb=(y,-x)^T$. In this case, we can compute explicitly the characteristics corresponding to the transport equation \eqref{ADV_bis}.
Indeed, the characteristic curve $\mathcal{C}_{\epsilon}^{t,x,y}(s) := \Big(X(s), Y(s)\Big)$ passing at instant $t$ through $(x,y)$, solves the ODE 
$$
\left\lbrace
\begin{array}{ll}
\displaystyle \dot{X}(s) = \frac{Y(s)}{\epsilon}\,, \\ \\
\displaystyle \dot{Y}(s) = - \frac{X(s)}{\epsilon}\,,
\end{array}\right.
\label{charac}
\quad \quad  (X(t),Y(t)) = (x,y)\,,
$$
and has the explicit form 

$$
\mathcal{C}_{\epsilon}^{t,x,y}(s) := \ds \left(\begin{array}{c}X \\Y\end{array}\right)(s;t,x,y)  =  \mathcal{R}\Big(\frac{s-t}{\epsilon}\Big) \left(\begin{array}{c}x \\y\end{array}\right) \,.
$$
with the rotation matrix given by
$$
\mathcal{R}(\theta) = 
\left(\begin{array}{cc}\ds \cos(\theta) & \ds \sin(\theta) \\ \\\ds -\sin(\theta)& \ds  \cos(\theta)\end{array}\right) \,.
$$
\noindent These characteristic curves are nothing else than spirals in the $(t,x,y)$-phase-space. All characteristics are $2\, \pi\, \epsilon$-periodic in $s$. The solution $f^{\epsilon}$ of the advection equation \eqref{ADV_bis} is now constant along these curves, such that
$$
f^{\epsilon}(t,x,y) = f_{in}(X(0;t,x,y),Y(0;t,x,y))\,, \quad \forall(t,x,y) \in [0,T] \times \Omega\,, 
$$
which leads to the following analytic expression of the unique solution to \eqref{ADV_bis}
\be \label{exact_sol}
f^{\epsilon}(t,x,y) = f_{in} \Big(\cos \Big( \frac{t}{\epsilon} \Big)x - \sin \Big(\frac{t}{\epsilon}\Big)y,\sin \Big(\frac{t}{\epsilon}\Big)x + \cos \Big( \frac{t}{\epsilon} \Big)y \Big)\,.
\ee

\noindent Finally, the $\epsilon \to 0$ limit solution $f^0$ solves the problem \eqref{ADV_bis_0}, leading to

\be \label{limit_sol}
 f^0 = \langle f_{in} \rangle = \frac{1}{2\pi} \; \int_0^{2\pi} f_{in}(R\cos(s),R\sin(s))\,\mathrm{d}s\,,
\ee

\noindent where $R = \sqrt{x^2+y^2}$. Let us choose now as initial data a Gaussian peak not centered in the origin, i.e.
$$
f_{in}(x,y) = \exp  \Bigg(- \frac{(x-0.5)^2 + (y-0.5)^2}{2 \; \eta^2} \Bigg)\,, \; \; \eta = 0.05 \,, \quad L_x=L_y=1\,,
$$

\noindent and investigate how the numerical scheme is rendering its movement. 

In the following, we shall denote by $\Pi_h(f^{\epsilon})$ resp. $f^{\epsilon,\sigma}_h$ the exact solution \eqref{exact_sol} calculated on the grid mesh resp. the numerical solution of \eqref{NR} obtained with our (DAMM)-scheme, and $\Pi_h(f^{0})$ refers to the exact limit solution \eqref{limit_sol} calculated on the grid mesh. Let $\mathcal{Q} := (0,T) \times \Omega_S$. We introduce also the numerical errors
\be \label{truncation_errors}
\begin{array}{l}
\ds \mathcal{L}^{p}_{\eps, \mathcal{X}} := ||\Pi_h(f^{\epsilon}) - f^{\epsilon,\sigma}_h||_{L^{p}_h(\mathcal{X})}\,, \quad \;  \; \mathcal{L}^{p}_{0,\mathcal{X}} := ||\Pi_h(f^{0}) - f^{\epsilon,\sigma}_h||_{L^{p}_h(\mathcal{X})}\,,
\end{array}
\ee

\noindent where $p \in \{1,2,...,\infty\}$, $\mathcal{X}$ stands for $\mathcal{Q}$ or $\Omega_S$, and $L^p_h$ denotes the discrete $L^p$-norm.\\

Figure \ref{circle_non_limit} shows now the numerical (DAMM)-scheme solution and the corresponding exact solution, in the non-limit ($\eps=1$) regime. The solutions related to the limit regime ($\eps$=0) are plotted in Figure \ref{circle_limit}. One observes that the numerical scheme we propose in this paper approximates well the exact solutions in both extreme regimes. In the next paragraphs we will try to prove more rigorously this visible correspondence and justify the choice of the stabilization parameter $\sigma$.

\begin{figure}[H]
\centering
	
            \centering
            \includegraphics[width=0.95\textwidth]{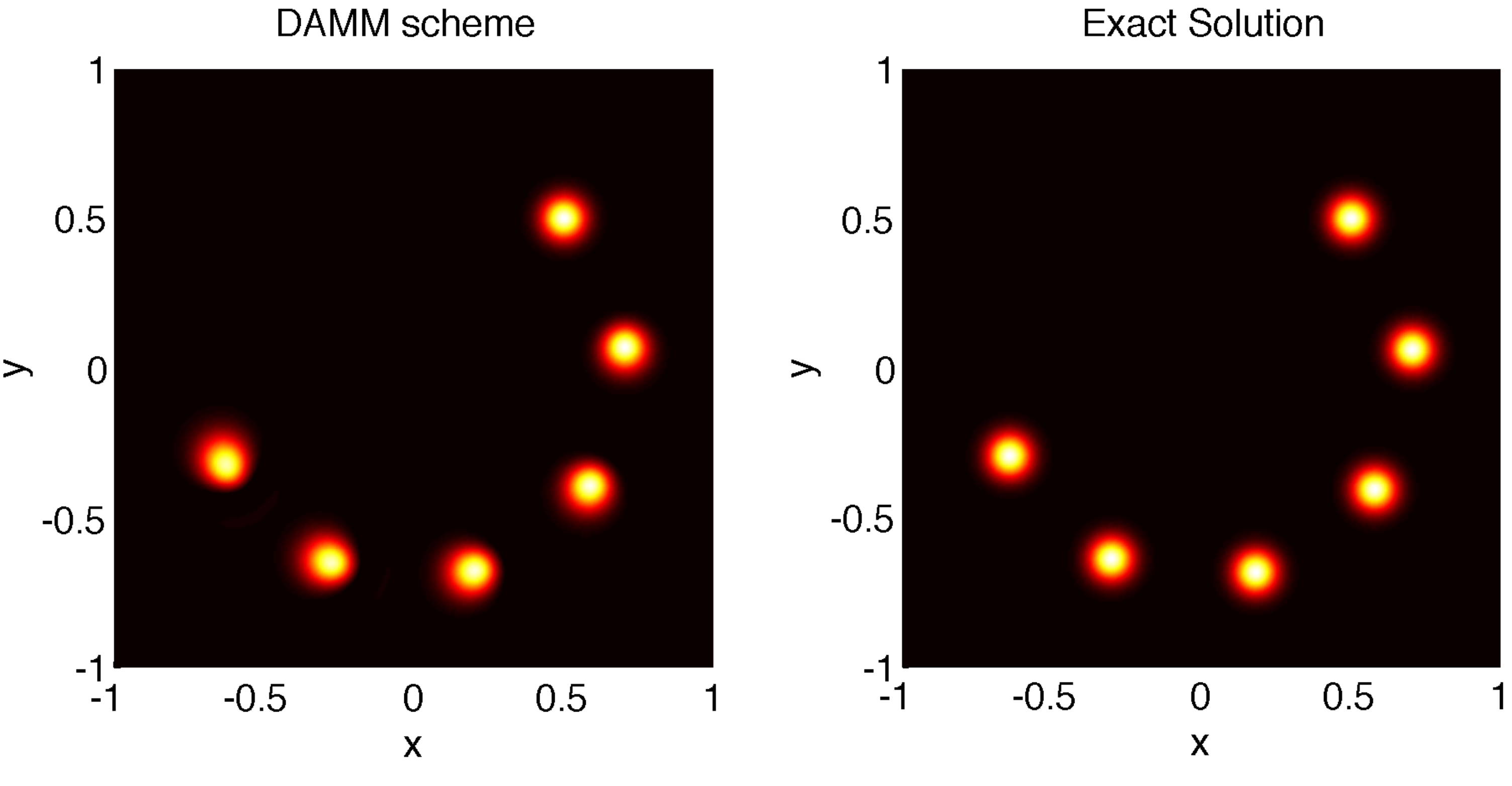}
            \caption{\small{(Non-limit case $\eps=1$). Superposition of the distribution function $f_h^{\eps,\sigma}$ at several time steps $t^n$ (left-panel) compared to the exact solution $f^{\eps}$ (right-panel), with $N_t=2000$, $T=3.5$, $N_x=N_y=200$, $\sigma = \Delta x^2$.                      } 					}
            \label{circle_non_limit}
    \end{figure}
    
\vspace{1cm}

\begin{figure}[H]
           \centering
           \includegraphics[width=0.95\textwidth]{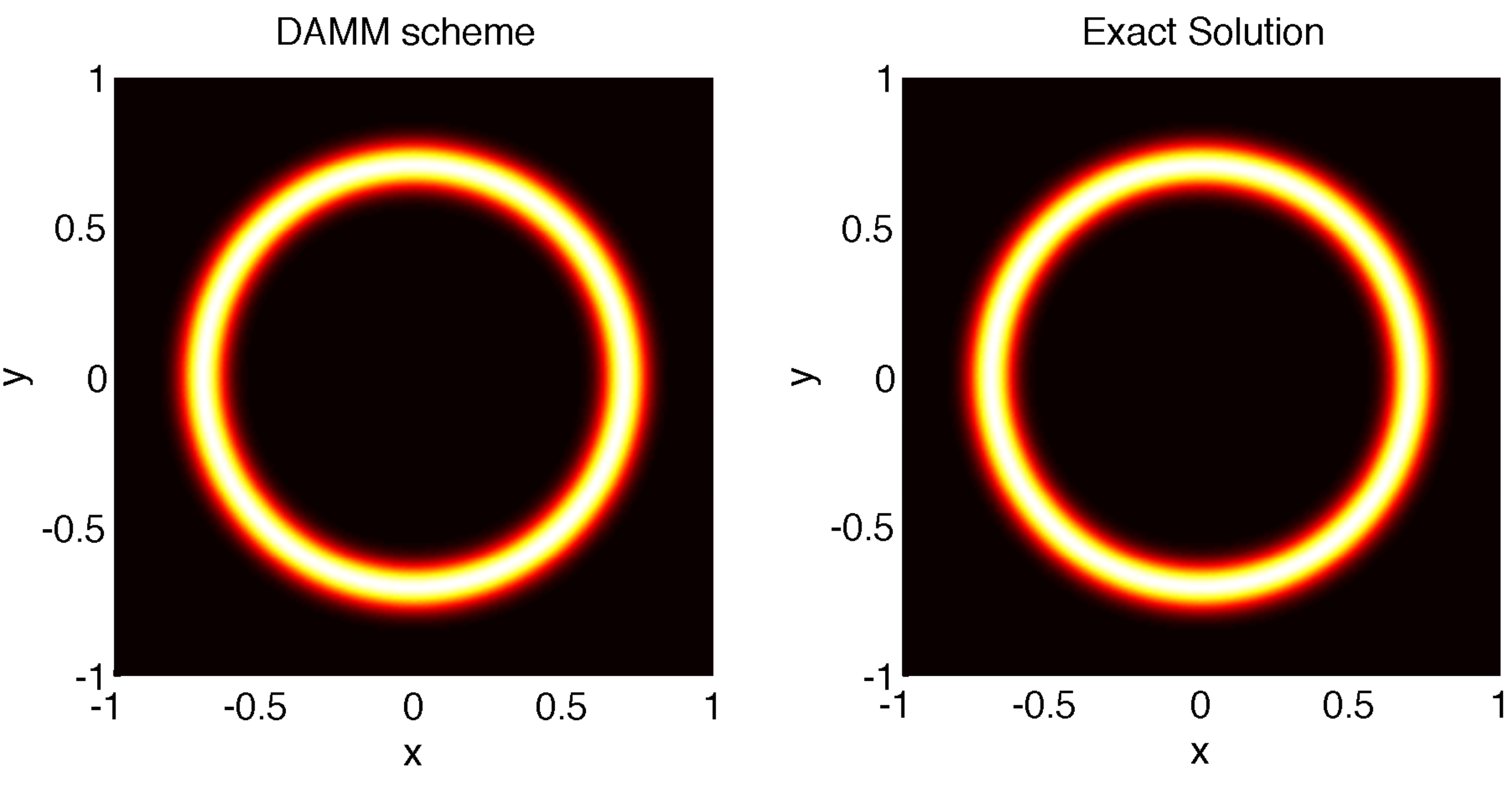}
            \caption{\small{(Limit case $\eps = 0$). Plot of $f^{0,\sigma}_h$ at final time $T=1$ with $N_t=200$ and $\sigma = \Delta x$ (left-panel), compared to the limit exact solution $f^{0}$ (right-panel). Mesh size : $N_x=N_y=200$.}}
            \label{circle_limit}
            
            \normalsize


\end{figure}

\bigskip 

\bigskip

\bigskip 

\bigskip

\subsection{AP-property} Let us start by discussing the AP property of our scheme. As a reference scheme, we decided to take a fully implicit DIRK-scheme with Arakawa space discretization, solving \eqref{ADV_bis}. In Figure \ref{figure3} (A) we plot the condition number of the system matrix of the (DAMM)-scheme and of the implicit reference scheme as a function of $\epsilon$. One observes that the condition number of the (DAMM)-scheme is bounded uniformly in $\epsilon$, whereas the implicit scheme is ill-conditioned in the limit $\epsilon \to 0$. This underlines one important advantage of our (DAMM)-scheme when compared with standard schemes, namely the fact that the (DAMM)-scheme does not degenerate in the limit $\epsilon \to 0$. From the right panel (B), however, it is evident that the condition of the (DAMM)-scheme depends on the stabilization parameter $\sigma$. This reflects the fact that in the limit $\sigma \to 0$, the solution $(f^{\eps,0,n}, q^{\eps,0,n})$ of \eqref{ADV_AP_h} is not unique, and therefore the problem becomes ill-posed.  As mentioned later on, $\sigma$ has to be chosen not too small, such to have a reasonable condition number, and not too large, in order not to modify too much the problem. 

\begin{figure}[ht]
\centering
	\begin{subfigure}[b]{0,49\textwidth}
           \centering
           \includegraphics[width=\textwidth]{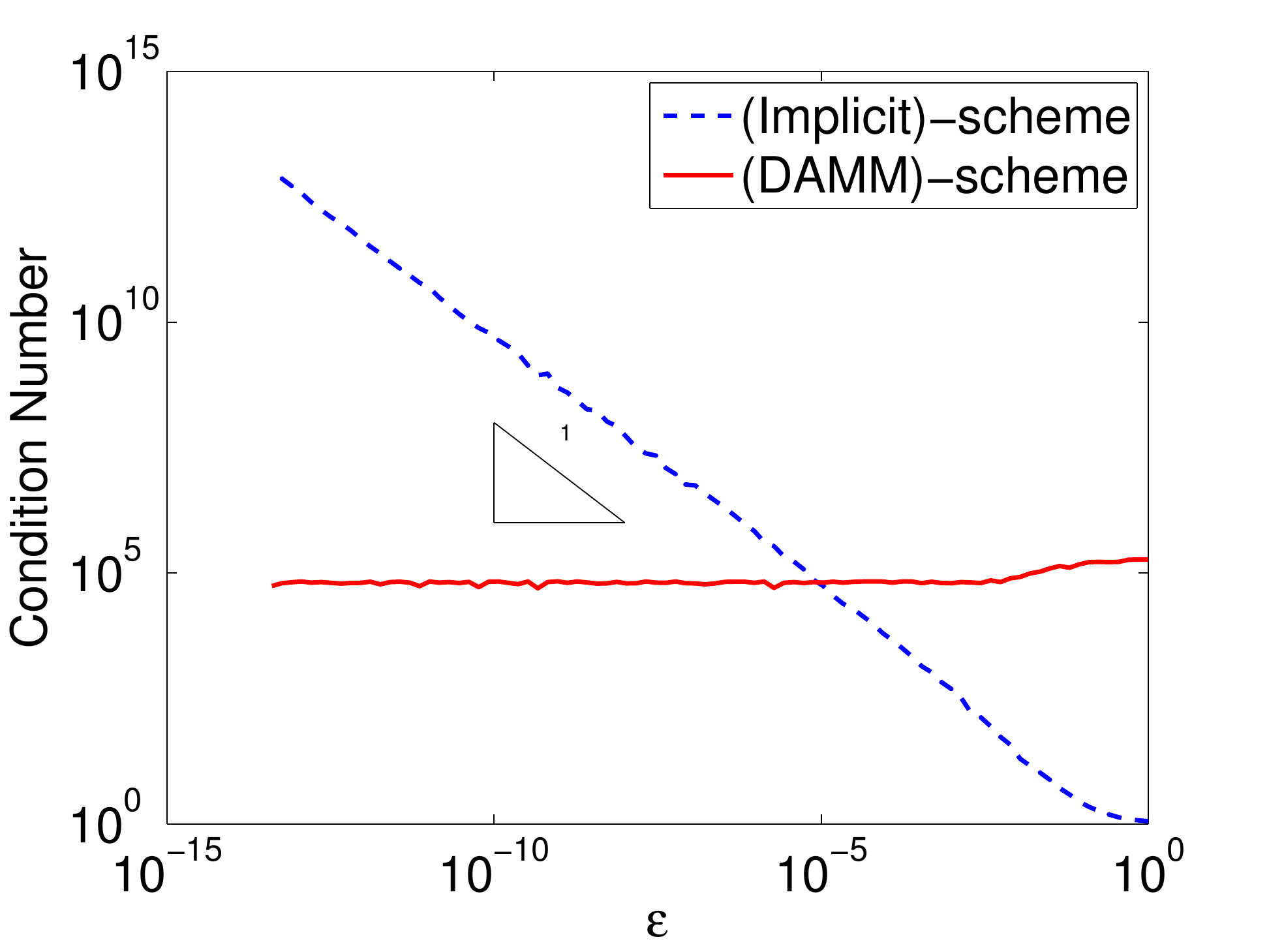}
            \caption{}
            \label{fig:a}
    \end{subfigure}
	\begin{subfigure}[b]{0.49\textwidth}
            \centering
            \includegraphics[width=\textwidth]{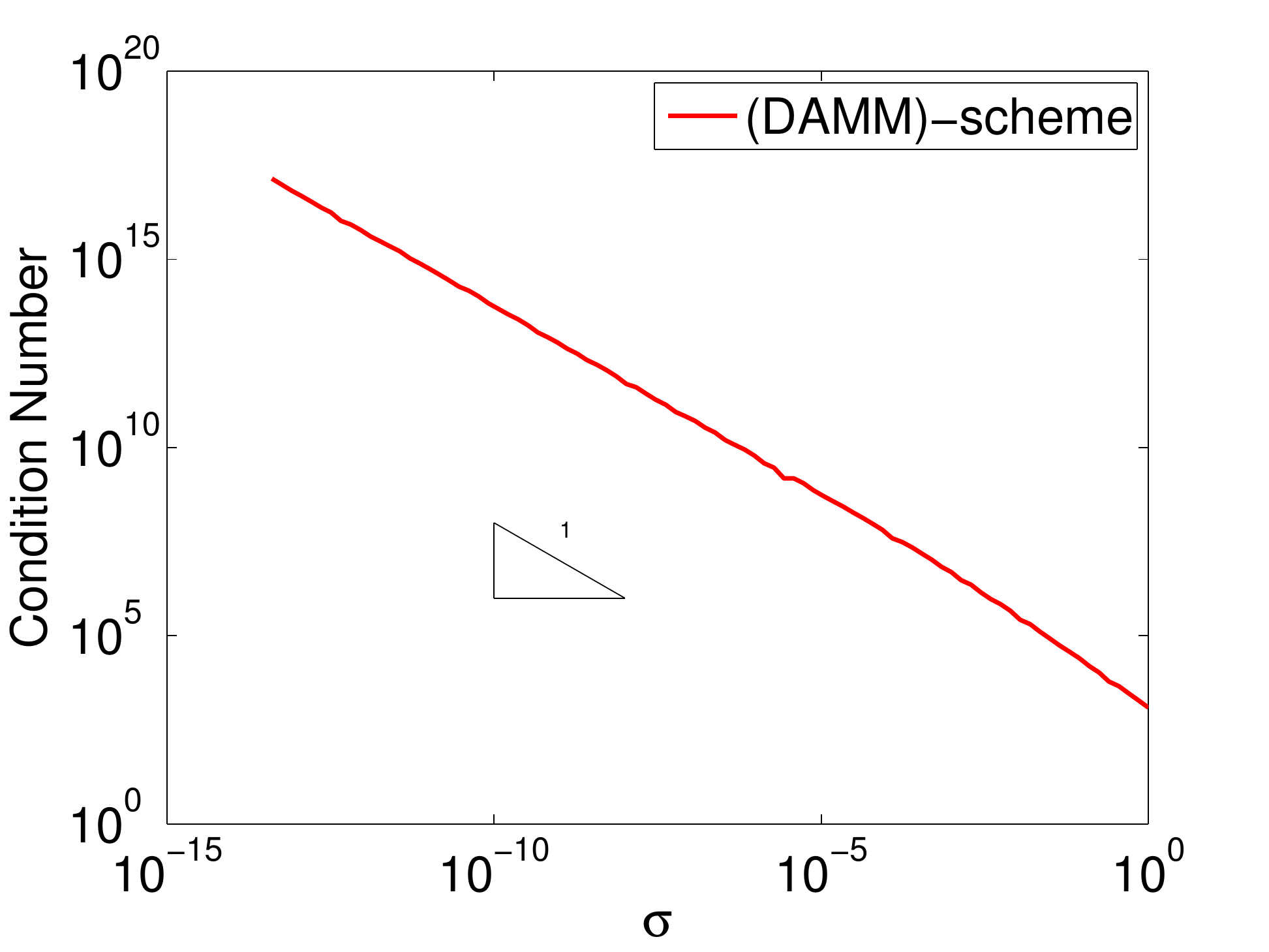}
            \caption{}
            \label{fig:b}
    \end{subfigure}
\caption{\small{Condition numbers as a function of $\epsilon$ with $\sigma=\Delta x^2$ for both (DAMM) and fully implicit schemes (A), and as a function of $\sigma$ with $\epsilon=1$ for the (DAMM)-scheme (B). Here $T=1$, $N_t = 200$, $N_x = N_y = 50$. }}
\label{figure3}
\end{figure}

\subsection{Convergence property} Next we study the convergence properties of the (DAMM)- scheme. Figure \ref{figure4} displays the convergence rates in $\Delta t$ and in $\Delta x = \Delta y$, obtained by comparison with the exact solutions \eqref{exact_sol} for $\epsilon=1$ and $\sigma=\Delta x^2$. In panel (A), one observes the expected second-order convergence rates with respect to time. In panel (B), the second-order convergence in space due to the Arakawa discretization of the Poisson brackets is evident. We observe from Table \ref{table1} that the convergence rate in space is even better for $\eps=0$. 

\begin{figure}[ht]
\centering
	\begin{subfigure}[b]{0,49\textwidth}
           \centering
           \includegraphics[width=\textwidth]{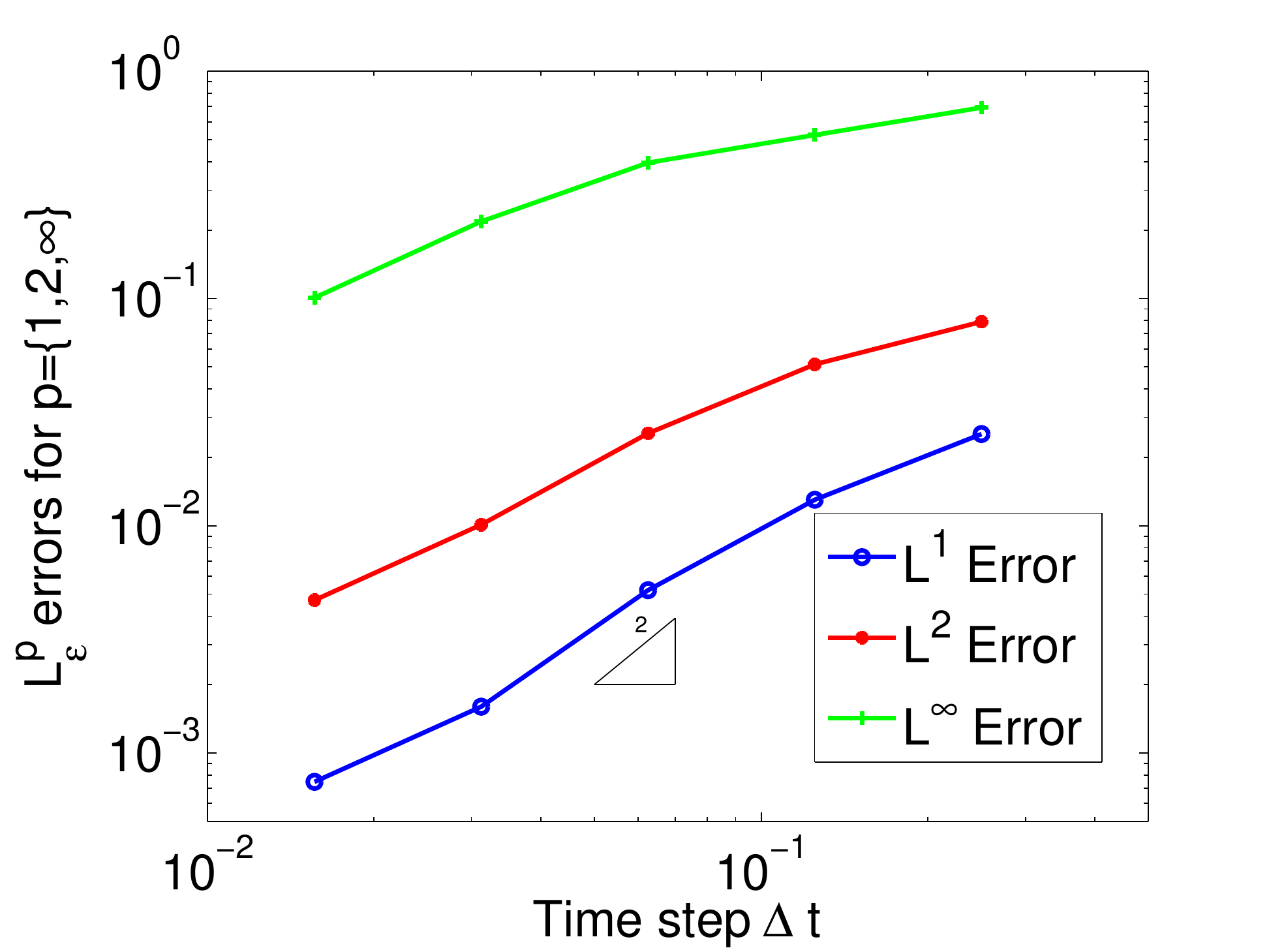}
            \caption{Errors in time. Fixed mesh size: $N_x=N_y=350$.}
            \label{fig:a}
    \end{subfigure}
	\begin{subfigure}[b]{0.49\textwidth}
            \centering
            \includegraphics[width=\textwidth]{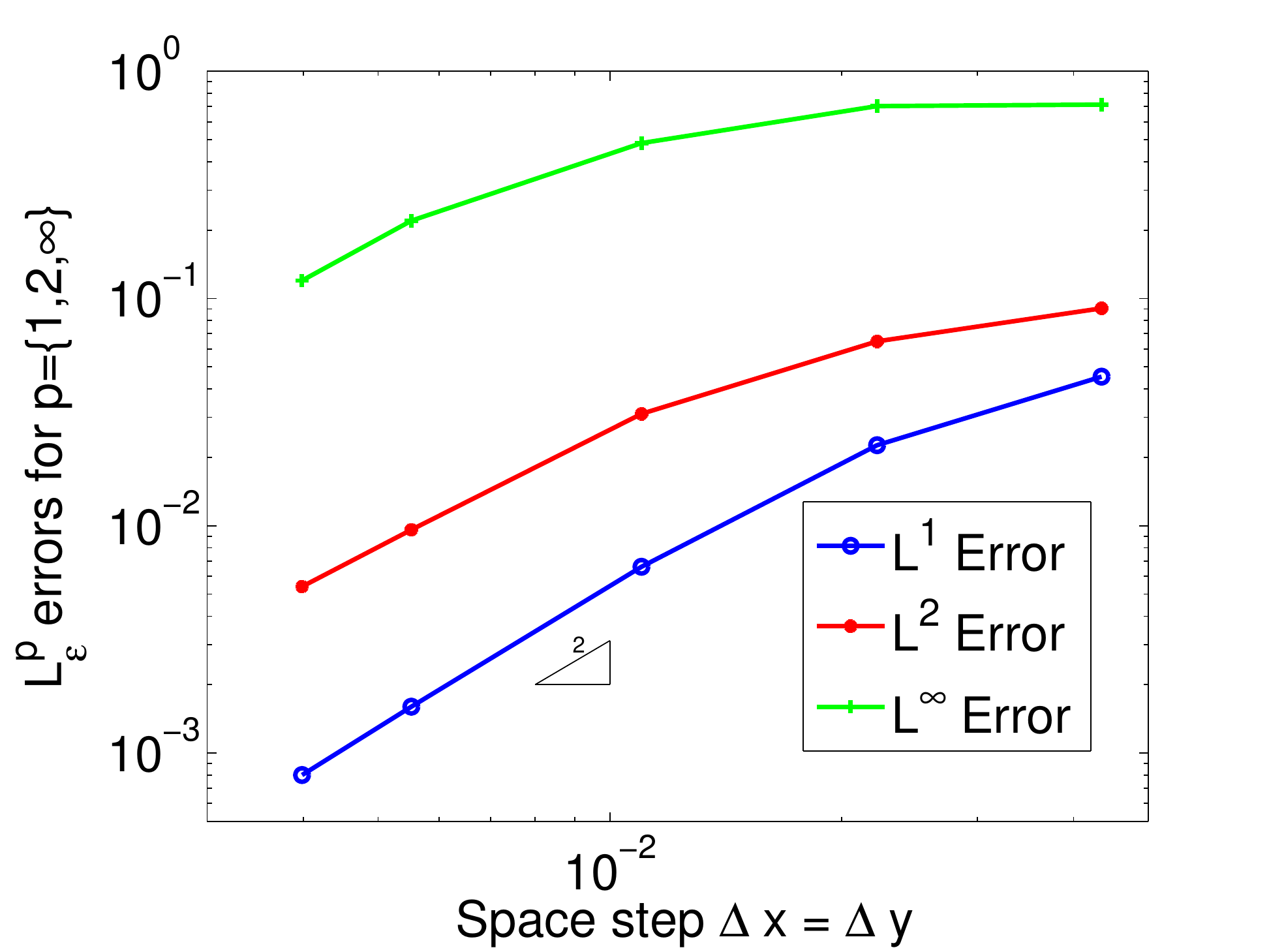}
            \caption{Errors in space. Fixed number of iterations: $N_t=700$.}
            \label{fig:b}
    \end{subfigure}
\caption{\small{Convergence studies for the (DAMM)-scheme: errors $\mathcal{L}^1_{\eps,\mathcal{Q}}$,  $\mathcal{L}^2_{\eps,\mathcal{Q}}$, and $\mathcal{L}^{\infty}_{\eps,\mathcal{Q}}$ as a function of $\Delta t$ (A) and $\Delta x = \Delta y$ (B), respectively. Parameters were $T=1$, $\eps=1$ and $\sigma = \Delta x^2$.  }}
\label{figure4}
\end{figure}

\subsection{Influence of the stabilization} Our next objective is to investigate the influence of the stabilization parameter $\sigma$ on the quality of the numerical solution in the different $\eps$-regimes. Let us start the discussion with the limit regime $\eps=0$. In Figure \ref{figure5} (A), we plot the $\mathcal{L}^1_{0,\Omega_S}$ error (wrt the exact sol.) over time $t$\,, for two different choices of $\sigma$, namely $\sigma = \Delta x$ and $\sigma = \Delta x^2$, and for several choices of $\Delta x$. One observes two phases: at first the numerical solution relaxes towards the (weak) analytic limit solution, until a plateau is reached and the error remains constant over time. This relaxation is faster for smaller values of $\sigma$, fact which can be explained by taking a look at the degenerate diffusion equation \eqref{Diff} we are effectively solving in the limit $\eps \to 0$. Smaller $\sigma$ means stronger diffusion along the field lines of $\mathbf{b}$, which means that the number of iterations $n_{eq}$ to reach the equilibrium plateau decreases with $\Delta x$, see Table \ref{table1}.  Observe also that the error in the equilibrium phase is the same for each $\sigma = \Delta x^{r}$ with $r \geq 1$, only the relaxation rate strongly depends on the choice of $r$. 

Let us mention briefly the computational time (CPU time) one needs for reaching the equilibrium plateau for the different cases studied in Figure \ref{figure5} (A). The problem is that the condition number of the system matrix is inversely proportional to $\sigma$, as already demonstrated in Figure \ref{figure3} (B). This bad conditioning would lead necessarily to an increase in CPU time for very small $\sigma$ which has to be evaluated. For example, regarding the case $\Delta x = 2/80$ (third curve in Figure \ref{figure5} (A) and third line in Table \ref{table1}), one obtains $t_{CPU}(\sigma =\Delta x) = 53 \; \textnormal{s}$ in contrast to $t_{CPU}(\sigma =\Delta x^2) = 104 \; \textnormal{s}$. Thus, even if Figure \ref{figure5} (A) suggests that a higher $r$ would be more suitable to attain quickly the equilibrium plateau, this previous study about the CPU time advices us to be more careful and choose $\sigma$ not too small.\\

In the regime $\epsilon=1$, displayed in Figure \ref{figure5} (B), the $\mathcal{L}_{\epsilon, \Omega_S}^1$ error increases linearly with time for all choices of $\Delta x$. There is a very weak $\sigma$ dependence in this regime as shown by the quasi superposition of the curves. The linear increase of the error is due to the dispersive character of the Arakawa discretization which leads to errors in the phase velocities. The not-dependence on $\sigma$ is due to the fact that the term $\eps \, \bfb \cdot \nabla \, q^{\eps}$ in the second equation of \eqref{ADV_AP} is larger for $\epsilon=1$ than the term $\sigma \, q^{\epsilon}$.\\

\begin{figure}[ht]
\centering
	\begin{subfigure}[b]{0,49\textwidth}
           \centering
           \includegraphics[width=\textwidth]{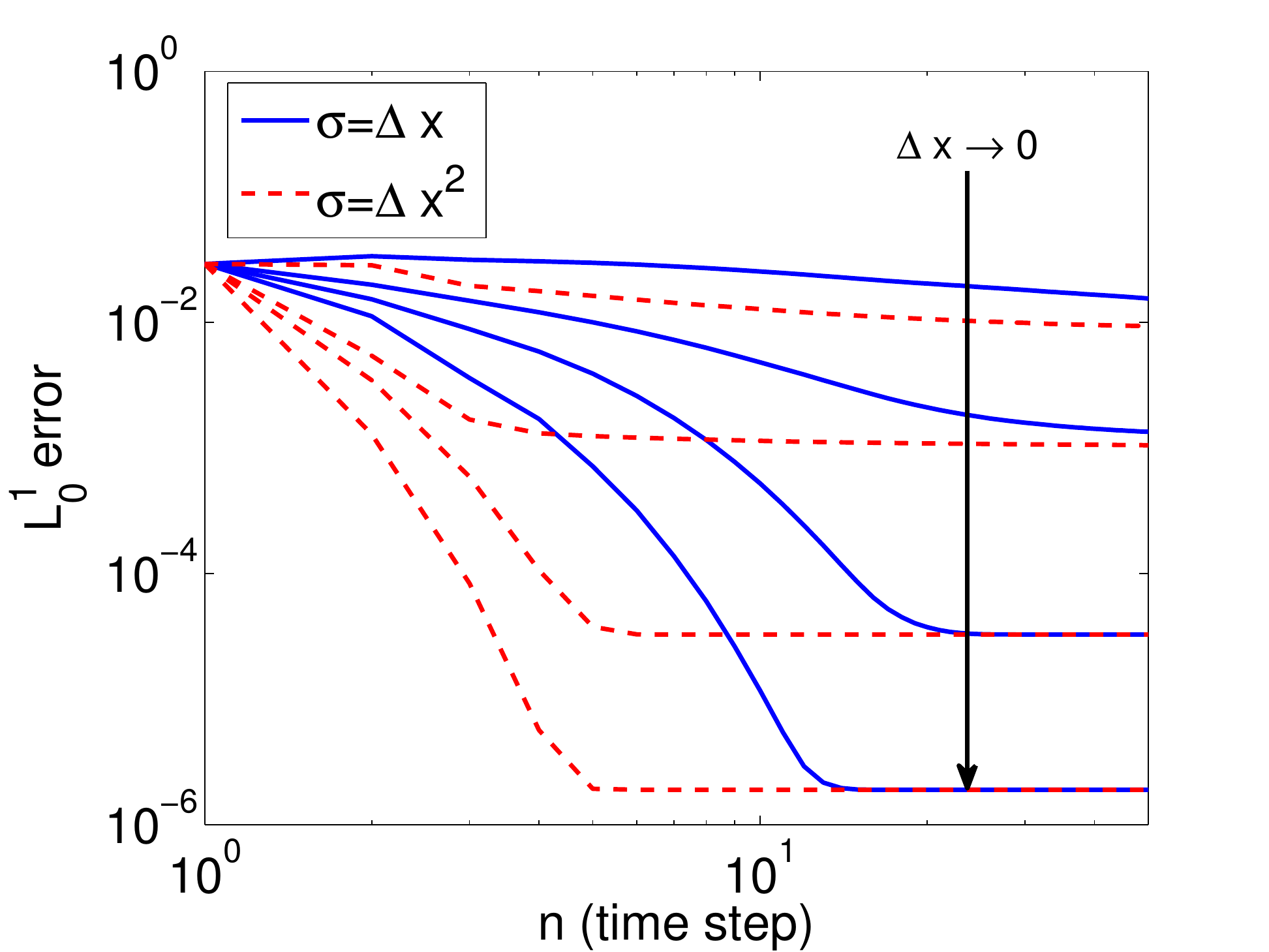}
             \caption{ $\eps = 0$ and two choices of $\sigma$.}
            \label{fig:a}
    \end{subfigure}
	\begin{subfigure}[b]{0.49\textwidth}
            \centering
            \includegraphics[width=\textwidth]{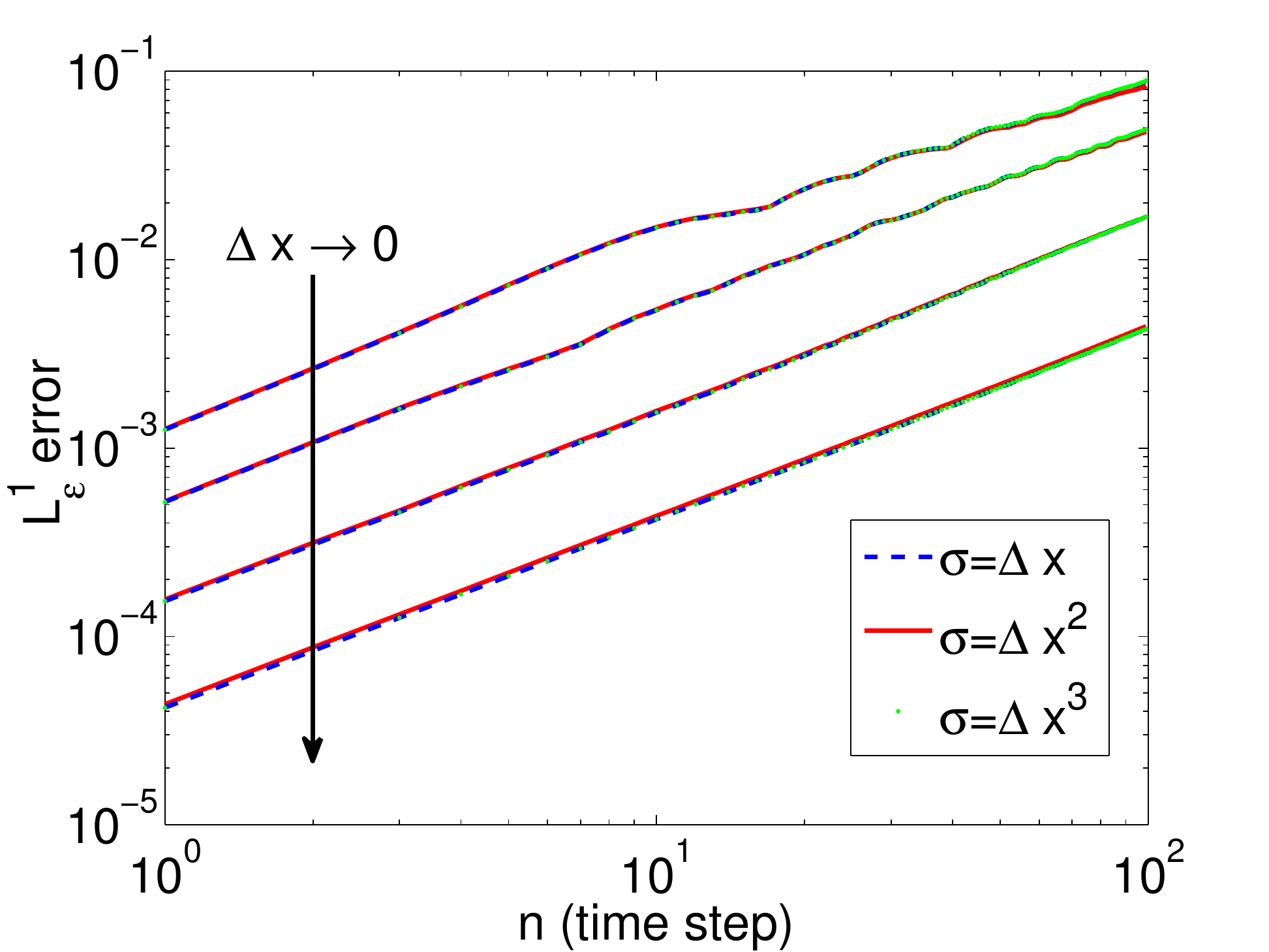}
              \caption{ $\eps = 1$  and three choices of $\sigma$.}
            \label{fig:b}
    \end{subfigure}
\caption{ $\mathcal{L}^1_{0,\Omega_S}$ (panel (A)) and $\mathcal{L}^1_{\eps,\Omega_S}$ (panel (B)) errors over time for the (DAMM)-scheme for two different $\epsilon$-regimes and several $\Delta x$.}
\label{figure5}
\end{figure}

\begin{table}[htbp]
\caption{Corresponding to Figure \ref{figure5} (A). $\mathcal{L}^1_{0,\Omega_S}$-error at final time $T=200\,\Delta t$, $\Delta t = 0.01$; number of iterations $n_{eq}$ and CPU-time $t_{CPU}$ for reaching the equilibrium plateau; condition nbr. CN of the linear system.}
\begin{center}
\begin{tabular}{l|l|c|c|c|c||c|c|c|c} 
\cmidrule[1pt]{3-10}                  
\multicolumn{2}{c|}{}&\multicolumn{4}{c||}{$\sigma=\Delta x$}&\multicolumn{4}{c}{$\sigma=\Delta x^2$} \\\cmidrule{3-10}
\multicolumn{2}{c|}{} & $\mathcal{L}^1_{0,\Omega_S}$ at time $T$ & $n_{eq}$ & $t_{CPU}$ & CN & $\mathcal{L}^1_{0,\Omega_S}$ at time $T$ & $n_{eq}$ & $t_{CPU}$ & CN \\\midrule\midrule
\multirow{4}{3mm}{\begin{sideways}\parbox{15mm}{\quad $\Delta x$}\end{sideways}}
& $2/20$ & $0.0106$ & $> 200$ & $3.1$s & $8.4e3$ & $0.0082$ & $175$ & $2.8s$ & $1.3e5$ \\
& $2/40$ & $0.0011$ & $169$ & $12$s & $2.6e4$ & $0.0010$ & $66$ & $4.9$s & $8.5e5$ \\
& $2/80$ & $3.2737e-5$ & $123$ & $53$s & $9.1e4$ & $3.2735e-5$ & $10$ & $104$s & $3.9e6$ \\
& $2/160$ & $1.8992e-6$ & $19$ & $133$s & $2.9e5$ & $1.8992e-6$ & $5$ & $1302$s & $3.5e7$ \\\midrule
\end{tabular}
\end{center}
\label{table1} 
\end{table}

 \subsection{Choice of the stabilization parameter $\sigma$} After having given some qualitative observations about the influence of the discretization parameter $\sigma$ in different $\epsilon$-regimes, let us present some ideas to optimize the choice of $\sigma$. In Figure \ref{figure7} (A), we plot the $\mathcal{L}^1_{\epsilon,\Omega_S}$ error at the final time $T$ for the non-limit regime $\eps=1$ as a function of $\sigma$ for several values of $\Delta x$.  In order to minimize the error $\mathcal{L}^1_{\epsilon,\Omega_S}$\,, the curves suggest to choose a $\sigma$-value depending on $\Delta x$. To investigate more precisely this dependence, we propose to take as "optimal" $\sigma$, for each fixed $\Delta x$, a value $\sigma_h^{\epsilon}$ such that
 
\be \label{sigma_critique}
\sigma_h^{\epsilon} := \max \setc*{\sigma \in [\sigma_{min},1]}{\frac{||\Pi_h(f^{\epsilon}) - f^{\epsilon,\sigma}_h||_{L^{1}_h(\Omega_S)}-||\Pi_h(f^{\epsilon}) - f^{\epsilon,\sigma_{min}}_h||_{L^{1}_h(\Omega_S)}}{||\Pi_h(f^{\epsilon}) - f^{\epsilon,\sigma_{min}}_h||_{L^{1}_h(\Omega_S)}}<\eta}\,,
 \ee
 
 \noindent (where $\eta$ is an arbitrary precision) and evaluate how $\sigma^{\eps}_h$ varies with $\Delta x$. In Figure \ref{figure7} (B), we display $\ln(\sigma_h^{\epsilon})$ as a function of $\ln(\Delta x)$ for $\eta=0.01$ and $\sigma_{min}=7e-6$. The data approach a polynomial line of slope $p=1.917$, suggesting a relation between $\sigma_h^{\epsilon}$ and $\Delta x$ of the form $\sigma_h^{\epsilon}=C\, \Delta x^p$ (with $C>0$), very close to the relation chosen in the last sections. To end this paragraph, let us briefly analyze the influence of $\eta$, the precision criterion, appearing in \eqref{sigma_critique}. For that, Table \ref{table2} presents for several values of $\eta$, the slope of the polynomial fitting of order $1$ of the data $\ln(\sigma_h^{\epsilon})=f(\ln(\Delta x))$, as well as its correlation coefficient $r^2$. One notes that the slope is around $2$ in each case, validating the choice we have made before, for the stabilization parameter $\sigma$ in the large $\epsilon$-regime, namely $\sigma_h^{\eps} = (\Delta x)^2$.
 
 \begin{figure}[ht]
\centering
	\begin{subfigure}[b]{0,49\textwidth}
           \centering
           \includegraphics[width=\textwidth]{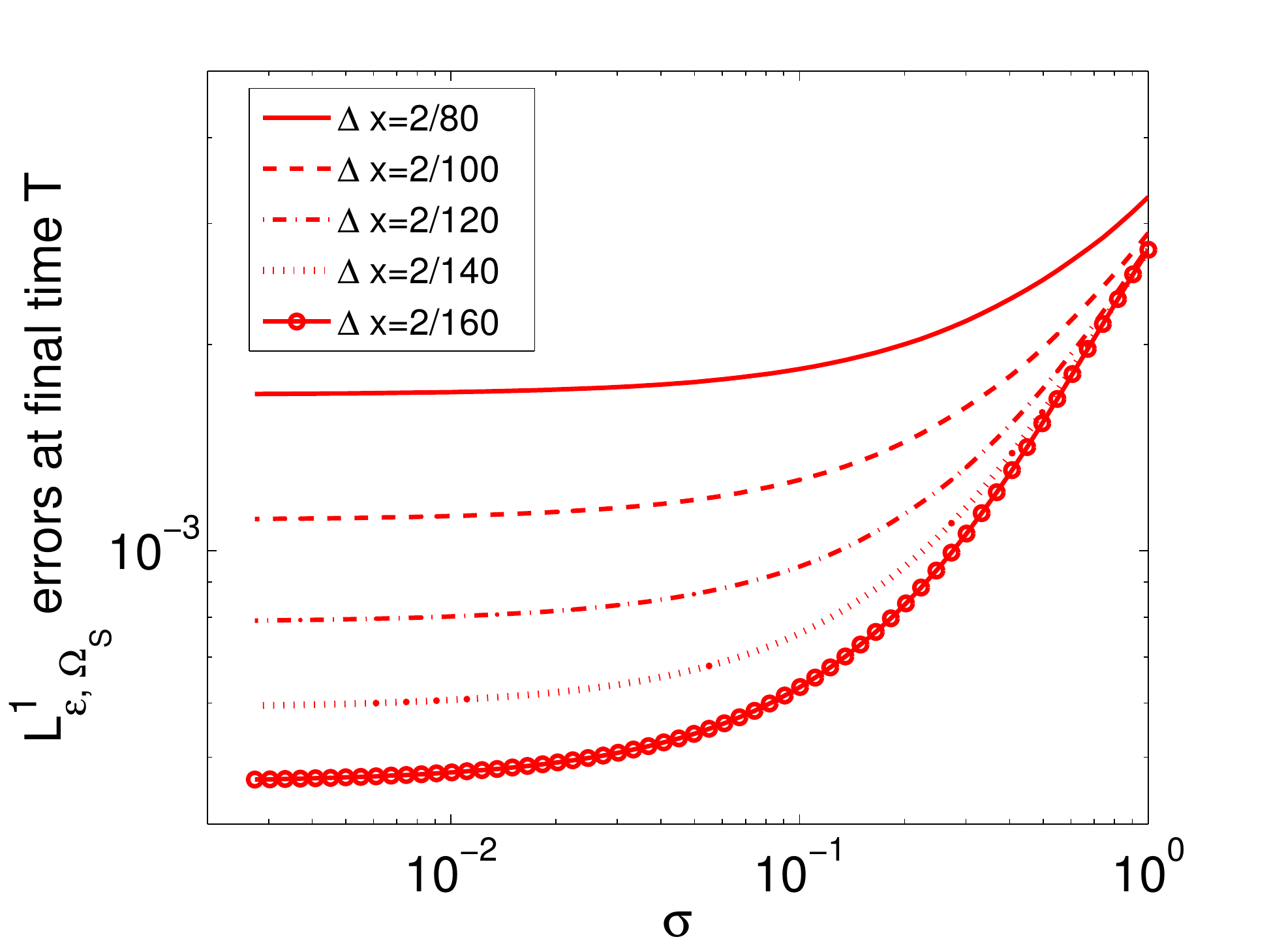}
            \caption{ $\mathcal{L}^1_{\eps,\Omega_S}$ error at final time $T$ as a function of $\sigma$.}
            \label{fig:a}
    \end{subfigure}
	\begin{subfigure}[b]{0.49\textwidth}
            \centering
            \includegraphics[width=\textwidth]{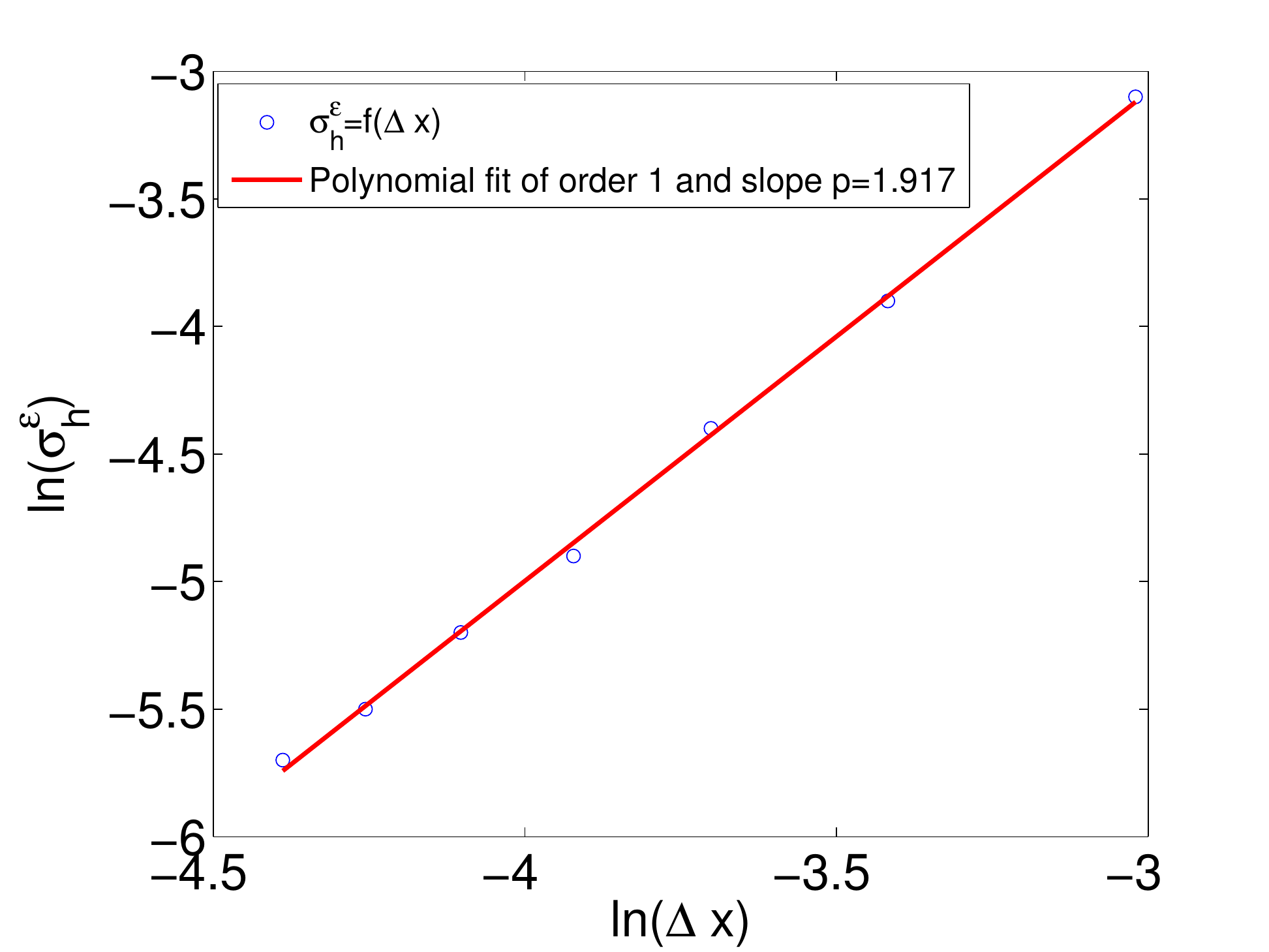}
            \caption{ Relation between $\sigma_h^{\eps}$ and $\Delta x$. Parameters were $\eta=0.01$, $\sigma_{min}=7e-6$.}
            \label{fig:b}
    \end{subfigure}
\caption{ (Non-limit case $\eps=1$). Study of the influence of $\sigma$ in the $\mathcal{L}^1_{\eps,\Omega_S}$  error (A) and of the relation between $\sigma_h^{\eps}$ and $\Delta x$ (B). Time discretization : $T=0.1$ and $\Delta t = 0.01$. }
\label{figure7}
\end{figure}

\begin{table} 
\caption{Polynomial fitting of order $1$ for the data $\ln (\sigma_h^{\epsilon})=f(\ln(\Delta x))$ for several values of the precision $\eta$. In each case, the slope $p$ of the line and the correlation coefficient $r^2$ are written.}
\begin{center} 
\begin{tabular}{|c|c|c|}
\hline 
$\eta$ & $p$  & $r^2$\\
\hline 
$1e-1$&$1.974$&$0.9975$\\
\hline
$5e-2$&$1.974$&$0.9975$\\
\hline
$1e-2$&$1.9175$&$0.9988$\\
\hline
$5e-3$&$1.8675$&$0.9988$\\
\hline
$1e-3$&$1.8597$&$0.9989$\\
\hline 
\end{tabular}
\end{center} 
\label{table2} 
\end{table}

We perform a similar analysis in the limit regime $\epsilon = 0$. For that, we display in Figure \ref{figure8} (A) the $\mathcal{L}^1_{0,\Omega_S}$ error at the final time $T$ as a function of $\sigma$\,, for several values of $\Delta x$. As in the non-limit $\epsilon$-regime, the curves suggest us to choose a $\sigma$ depending on $\Delta x$. Let us define the application $\mathcal{Z} : \sigma \mapsto ||\Pi_h(f^{0}) - f^{0,\sigma}_h||_{L^{1}_h(\Omega_S)}$. To study the dependence between $\sigma$ and $\Delta x$, we choose for each $\Delta x$ a $\sigma_h^0$ defined by:

\be
\sigma_h^0 := \arg \max_{\sigma \in [\sigma_{min} ,1]} \left|\frac{d\mathcal{Z}}{d\sigma}\right|(\sigma)\,,
\ee
\noindent and we plot in Figure \ref{figure8} (B) the evolution of $\ln(\sigma_h^0)$ as a function of $\ln(\Delta x)$. The corresponding data follow a linear relation, with a slope of $p=0.857$, meaning that $\sigma$ can be chosen as $\sigma=\Delta x$ in the limit regime $\eps=0$ with the aim to reduce the $\mathcal{L}^1_{0,\Omega_S}$ error and avoid a bad condition number.

\bigskip

To conclude this first numerical part, one can say that this simple test case permits to make a deep analysis of the (DAMM)-scheme. In particular, the AP behavior of the scheme was confirmed, the orders of convergence in both space and time were confirmed, and the influence of $\sigma$ as well as its delicate choice have been intensively investigated. This study was enabled by the existence of analytic solutions of the problem, rigorously compared to solutions obtained by the (DAMM)-scheme for several $\epsilon$-regimes. Thanks to this verification, the (DAMM)-scheme can be used to resolve more complicated models where no analytic solutions are at hand. This is the topic of the next part.
 \begin{figure}[H]
\centering
	\begin{subfigure}[b]{0,49\textwidth}
           \centering
           \includegraphics[width=\textwidth]{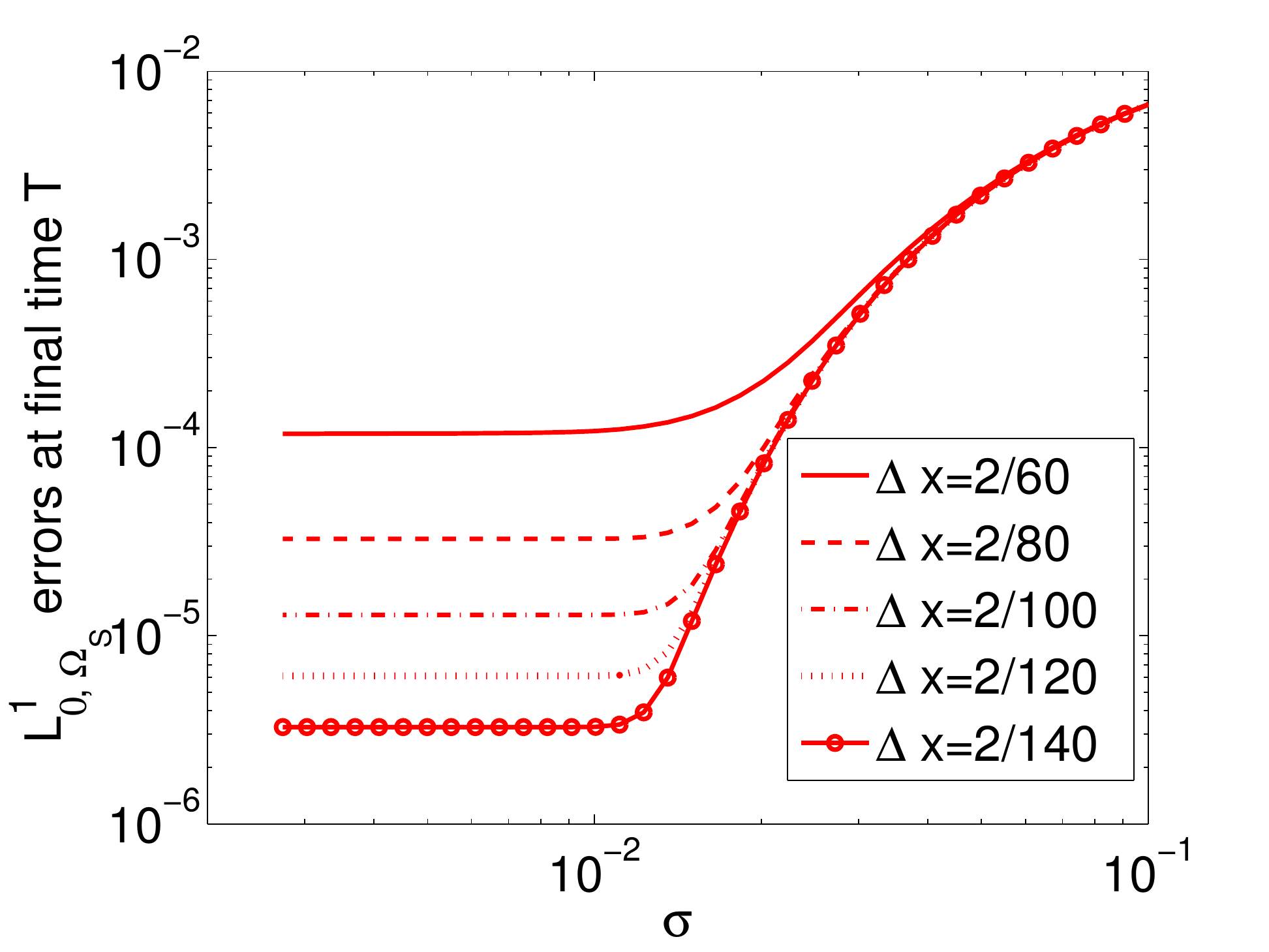}
            \caption{$\mathcal{L}^1_{0,\Omega_S}$ error at final time $T$ as a function of $\sigma$.}
            \label{fig:a}
    \end{subfigure}
	\begin{subfigure}[b]{0.49\textwidth}
            \centering
            \includegraphics[width=\textwidth]{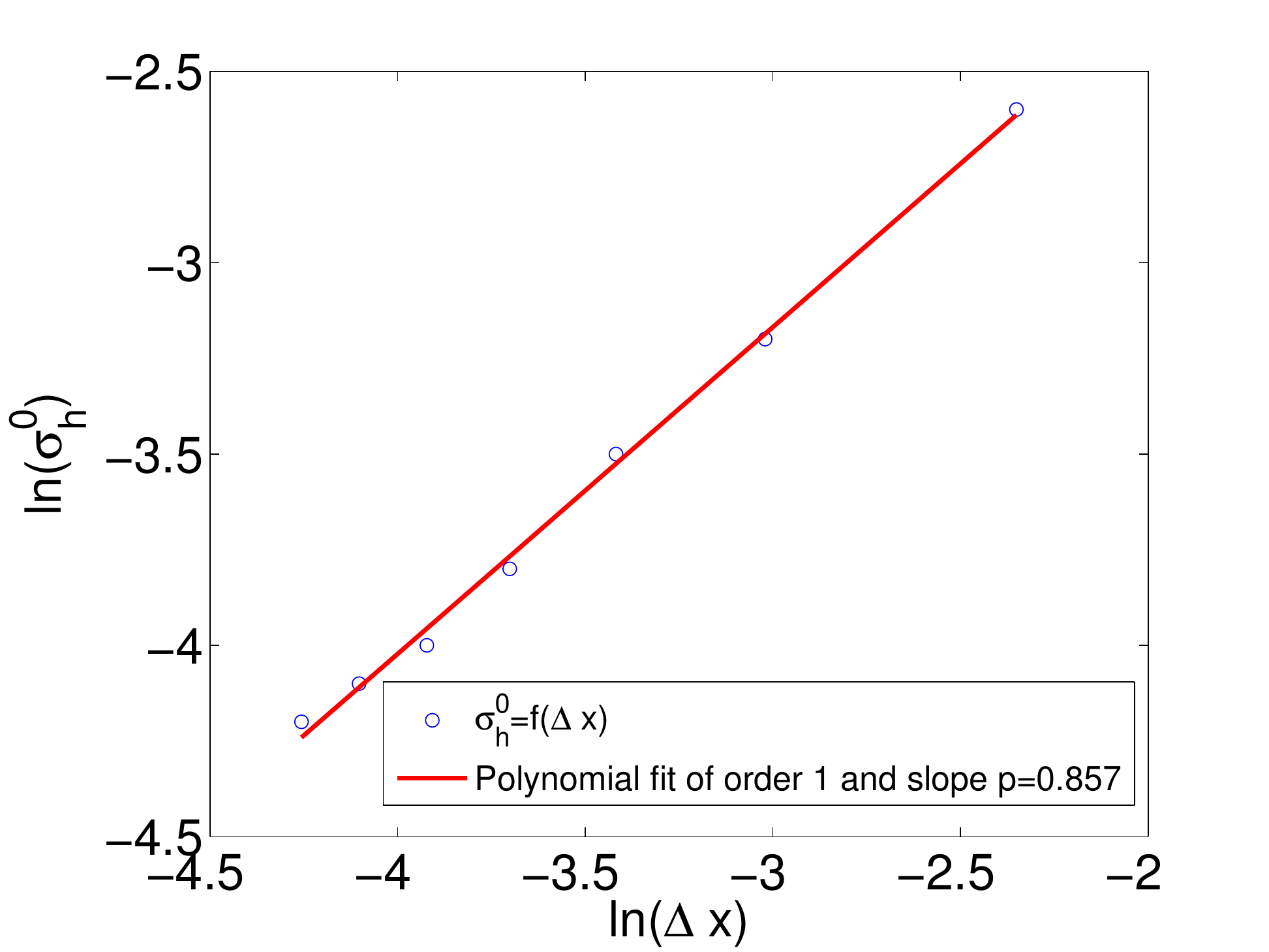}
            \caption{Relation between $\sigma_h^{0}$ and $\Delta x$. One chooses $\sigma_{\min}=7e-6$.}
            \label{fig:b}
    \end{subfigure}
\caption{(Limit case $\eps=0$). Study of the influence of $\sigma$ in the $\mathcal{L}^1_{0,\Omega_S}$error (A) and of the relation between $\sigma_h^{0}$ and $\Delta x$ (B). Time discretization : $T=0.1$ and $\Delta t = 0.01$.}
\label{figure8}
\end{figure}


\section{Numerical simulations for the Vlasov-Poisson test case} \label{SEC6}

The aim of this section is dual: firstly to solve numerically the Vlasov-Poisson system \eqref{VP1D1V} using the (DAMM)-scheme and to simulate some particular physical phenomena (such as the Landau damping or the two-stream instability) ; and secondly to study the long-time asymptotics $\epsilon \to 0$ of the two-stream instability. Note that the literature on the Vlasov-Poisson system is very rich, some theoretical as well as numerical results can be found in the non-exhaustive list \cite{mbflr, crou, frenod1, golse}.

\subsection{The Vlasov-Poisson system and its numerical discretization}

\noindent In this chapter, we set $\mathcal{Q} := (0,T) \times [0,L_x] \times [-L_v,L_v]$. Using the Poisson bracket, the Vlasov-Poisson 1D1V system verified by $f^{\epsilon}:=f^{\epsilon}(t,x,v)$ reads

\begin{equation} \label{Vlasov_Poisson_bracket} 
(VP)^{\eps}\,\,\, \left\{
\begin{array}{ll||}
\ds \partial_t f^{\epsilon} + \frac{1}{\epsilon}\, \{f^{\epsilon}, \Psi^{\epsilon} \}=0\,, \\[3mm]
\ds -\partial_{xx} \varphi^{\eps}(t,x) = 1-n^{\eps}(t,x)\,,
\end{array}
\right.
\end{equation}

\noindent where $\Psi^{\epsilon}(t,x,v) := v^2/2 - \varphi^{\epsilon}(t,x)$ is the stream-function and $\ds n^{\eps}(t,x) := \int_{\mathbb{R}} f^{\eps}(t,x,v) dv$ denotes the electron density. Due to the fact that this problem is non-linear (unlike the previous one), its study is a more delicate task. 

\noindent Following the same reformulation as before, we can construct an Asymptotic-Preserving scheme for the Vlasov-Poisson system by introducing an auxiliary variable $q^{\epsilon}(t,x,v)$. The AP-reformulation of \eqref{Vlasov_Poisson_bracket} is then discretized with the help of the (DAMM)-scheme as before. The determination of the electric field $E^{\epsilon}(t,x)$ is guaranteed by the resolution of the discrete Poisson equation. The fully discretized (first order in time) reformulated Vlasov-Poisson system is summarized here for clarity. For each time step $n$, one is looking for $(f^{\epsilon,\sigma,n+1}_h, q^{\epsilon,\sigma,n+1}_h)$, by iterating in $l \in \mathbb{N}$ like
\begin{equation} \label{Vlasov_Poisson_bracket_AP_full} 
(RVP)^{\epsilon,\sigma,n,l}_{h}\,\,\, \left\{
\begin{array}{ll||}
\ds - \frac{\varphi^{\epsilon,\sigma,n+1,l}_{i+1}-2\, \varphi^{\epsilon,\sigma,n+1,l}_{i} + \varphi^{\epsilon,\sigma,n+1,l}_{i-1}}{ \Delta x^2} =1- \Delta v \,\sum_{j=1}^{N_v} f^{\epsilon,\sigma,n+1,l}_{i,j}\,,  \\[3mm]
\ds \Psi^{\epsilon,\sigma,n+1,l}_{i,j} = \frac{1}{2}\, v_j^2 - \varphi^{\epsilon,\sigma,n+1,l}_{i}\,,  \\[3mm]
\ds f^{\epsilon,\sigma,n+1,l+1}_{i,j} + \Delta t \; [q^{\epsilon,\sigma,n+1,l+1}_{h}, \Psi_{h}^{\epsilon,\sigma,n+1,l}]_{i,j}=f^{\epsilon,\sigma,n}_{i,j}\,, \\[3mm]
\ds [f^{\epsilon,\sigma,n+1,l+1}_{h}, \Psi_{h}^{\epsilon,\sigma,n+1,l}]_{i,j} = \epsilon  \, [q^{\epsilon,\sigma,n+1,l+1}_{h}, \Psi_{h}^{\epsilon,\sigma,n+1,l}]_{i,j} - \sigma  \; q^{\epsilon,\sigma,n+1,l+1}_{i,j}\,,
\end{array}
\right.
\end{equation}
and starting from 
$$
f^{\epsilon,\sigma,n+1,0}_{i,j} := f^{\epsilon,\sigma,n}_{i,j}\,.
$$
This iterative procedure has been done, because of the non-linearity of the previous system. In the following simulations, the stopping criterion for these iterations (at $l=l_f$) is 

$$
\frac{||f^{\epsilon,\sigma,n+1,l+1}_{i,j} - f^{\epsilon,\sigma,n+1,l}_{i,j}||_1}{||f^{\epsilon,\sigma,n+1,l}_{i,j}||_1} + \frac{||\varphi^{\epsilon,\sigma,n+1,l+1}_{i} - \varphi^{\epsilon,\sigma,n+1,l}_{i}||_1}{||\varphi^{\epsilon,\sigma,n+1,l}_{i}||_1} < 10^{-2}\,,
$$
and we finish by posing
$$
f^{\epsilon,\sigma,n+1}_{i,j} := f^{\epsilon,\sigma,n+1,l_f+1}_{i,j}\,.
$$
\begin{remark}
Note that we wrote the previous system \eqref{Vlasov_Poisson_bracket_AP_full} without the DIRK time discretization in order to simplify its writing, however the following simulations had been implemented with the (DAMM)-scheme, including a DIRK time discretization. 
\end{remark}

\subsection{Numerical simulations for weak Landau damping and $\eps=1$}%

In order to validate our numerical procedure, we are interested in the Landau damping, for which analytic results are at hand. The Landau damping represents the exponential decrease of the electric field energy as a function of time (see \cite{mouhot,landau} for more details). For these simulations, the following initial data (see for example \cite{crou,filbet}) is considered:
\be \label{initial_data_vp}
f_{in}(x,v) =\frac{1}{\sqrt{2\pi}}\,(1+\gamma \cos(kx))\,e^{-v^2/2}\,, 
\ee
\noindent where $\gamma$ refers to the amplitude and $k$ to the mode of the perturbation of the homogeneous equilibrium $\ds \mathcal{M}(v) = (2\pi)^{-1/2}\, e^{-v^2/2}$. In the following simulations, we take $L_v=10$, $L_x=2\pi/k$, $\eps=1$ and $\sigma = (\Delta x/L_x)^2$. 
In this section, we investigate the weak Landau damping, choosing a low amplitude of perturbation $\gamma$. According to \cite{oneil}, the weak Landau damping manifests for times $t<1/\sqrt{\gamma}$. Beyond this time, the non-linear effects begin to be significant. Thus, we resolve the Vlasov-Poisson system \eqref{Vlasov_Poisson_bracket_AP_full} with the above initial condition \eqref{initial_data_vp} for $\gamma=0.001$ and $k=0.5$\,. To simplify the notation, we shall denote in the following simply by $f^{\eps}$ our numerical solution obtained by the (DAMM)-scheme.

\begin{figure}[ht]
\centering
	\begin{subfigure}{.49\textwidth}
  	\centering
  	\includegraphics[width=\linewidth,trim={0cm 0cm 0cm 0cm},clip]{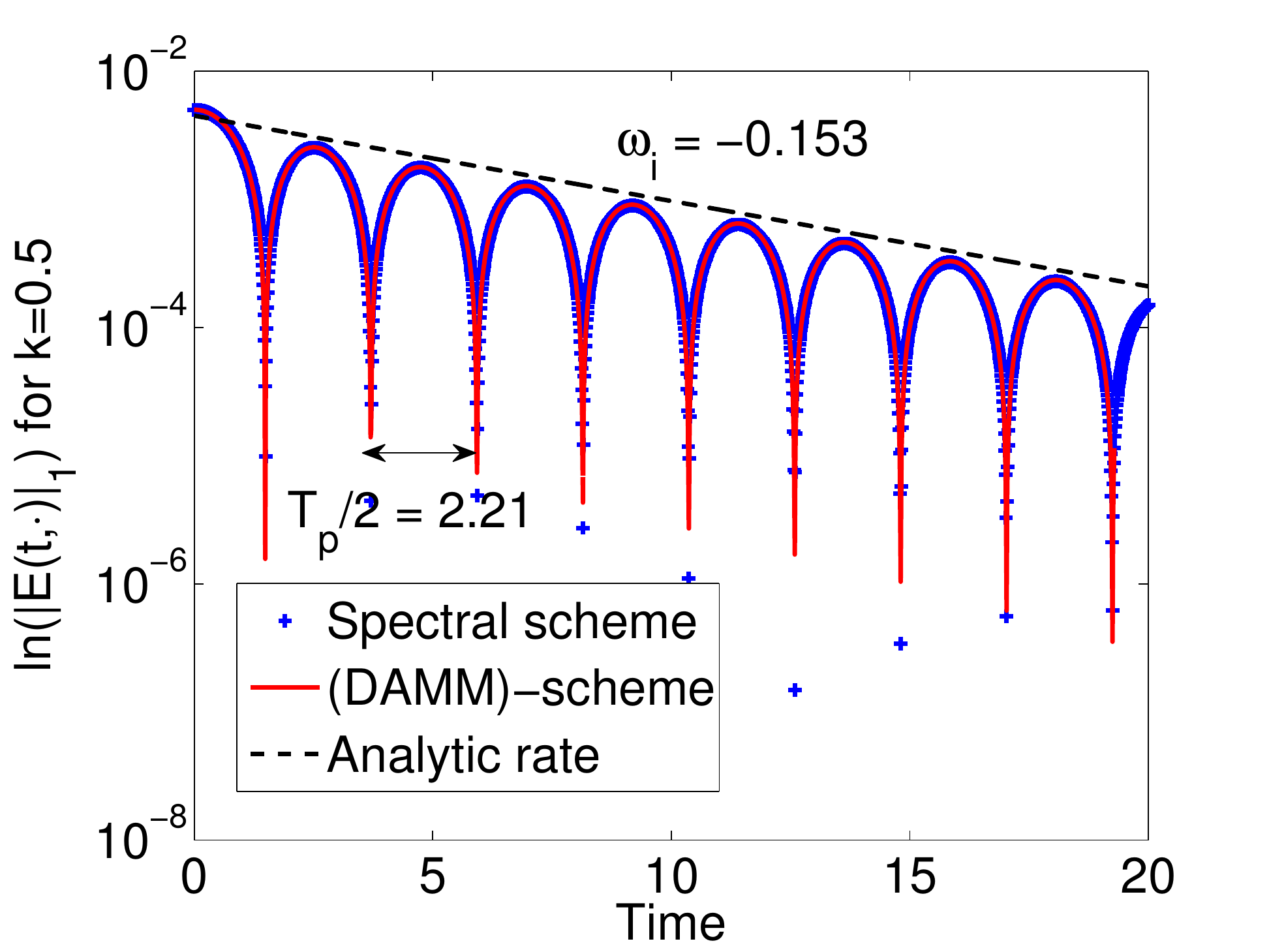}
	\caption{}
	\end{subfigure}
    \begin{subfigure}{.49\textwidth}
  	\centering
  	\includegraphics[width=\linewidth,trim={0cm 0cm 0cm 0cm},clip]{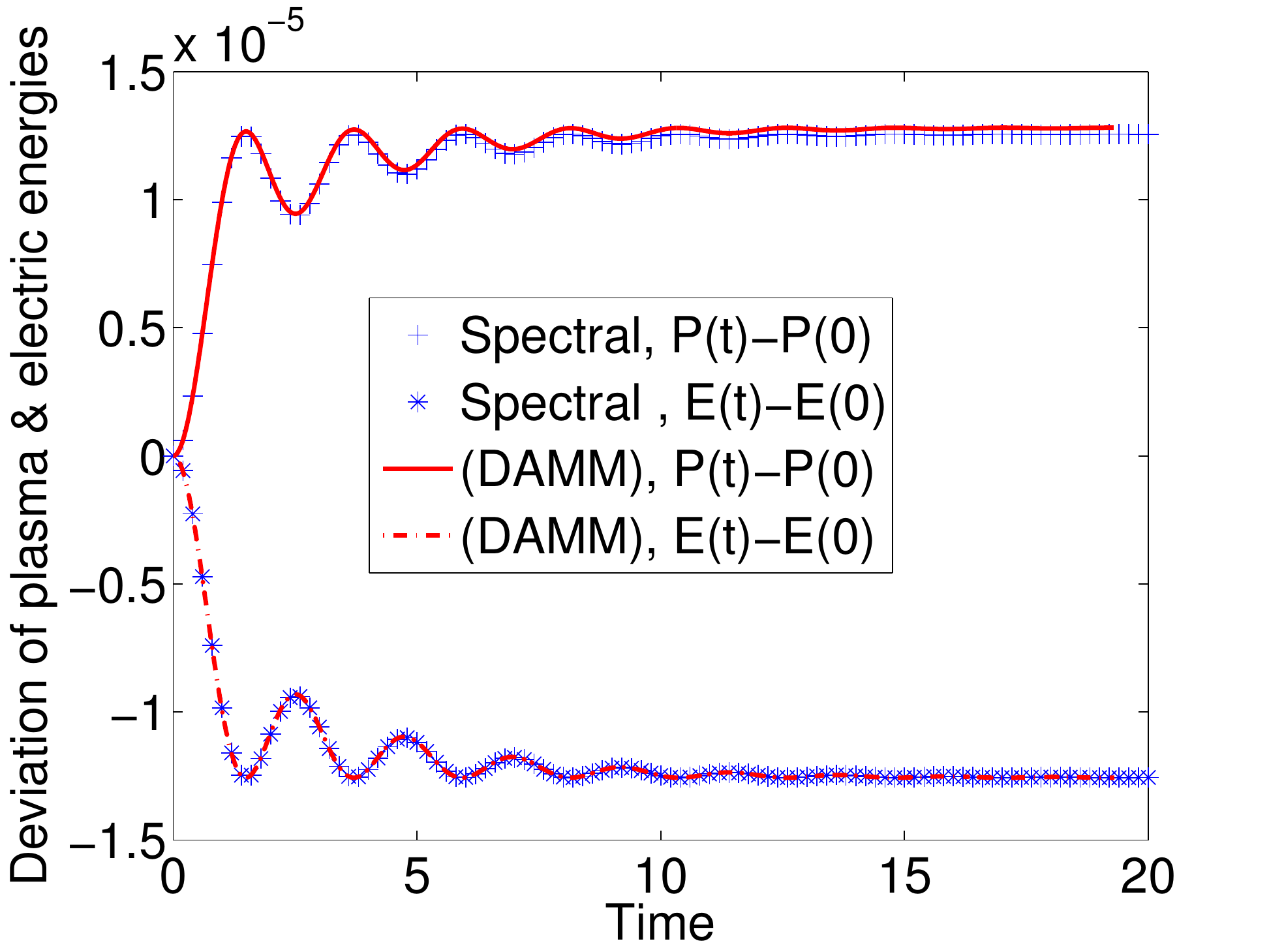}
	\caption{ }
	\end{subfigure} \\ 
	\caption{(Weak Landau damping for $\eps=1$). $L^1$-norm of the electric field (in log-scale) versus time (A) and deviations of plasma and electric energies versus time (B) for both (DAMM) and spectral schemes. $N_x=N_y=256$, $\Delta t =0.01$, $T=20$, $\sigma=(\Delta x /L_x)^2$.}
\label{linear_energy}
\end{figure}

In Figure \ref{linear_energy} (A) we represent the evolution in time of the $L^1$-norm of the electric field $||E^{\eps}(t,\cdot)||_1$ (in $log$-scale) obtained from the (DAMM)-scheme. So as to validate efficiently our (DAMM)-scheme, we plot in the same Figure \ref{linear_energy} (A) the corresponding evolution with a reference spectral scheme which resolves the system \eqref{Vlasov_Poisson_bracket}. The curves obtained from the two numerical schemes coincide perfectly. Moreover, we pay attention to the damping rate $\omega_i$ and the frequency of oscillations $\omega_p$, which depend on the perturbation mode $k$. Under certain approximations, several formulae of these latter can be found (see for example \cite{mak}).  One sees that both schemes approach the analytic values (for $k=0.5$), namely  $\omega_i= -0.153$ and $\omega_p = 2\,\pi/T_p = 1.415$. In Figure \ref{linear_energy} (B), we plot the deviation (from their initial value) of both electric and plasma energies. These latter are defined as $\ds \mathcal{E^{\eps}}(t) := 1/2 \, \int_{-L_x}^{L_x} \,|E^{\eps}(t, x)|^2\,dx$  and $ \ds \mathcal{P}^{\eps}(t) := 1/2 \, \int_{\mathbb{R}}\int_{-L_x}^{L_x} v^2\,f^{\eps}(t,x,v)\,dx\,dv$. The curves indicate clearly that the total energy $\mathcal{T}^{\eps} := \mathcal{E}^{\eps} + \mathcal{P}^{\eps}$ is conserved in compliance with the theory, for both (DAMM) and spectral schemes. Thus, the weak Landau damping is well simulated by the (DAMM)-scheme.

\bigskip

In Figure \ref{cut_linear_landau_damping} we displayed the distribution function in phase-space at time $t=0$ (panel (A)), $t=20$ (panel (C)) and $t=40$ (panel (E)). Note that one has plotted the perturbed part of the distribution function $f^\eps$, meaning $f^\eps-\mathcal{M}(v)$. In the panels (B), (D), and (F), we represented the cross-sections at $x=L_x/2$ of the previous plots, at the same times. These figures show us the continuous filamentation of $f^{\eps}$ over time. 

\begin{figure}[ht]
\centering
	\begin{subfigure}{.49\textwidth}
  	\centering
  	\includegraphics[width=\linewidth,trim={0cm 0cm 0cm 0cm},clip]{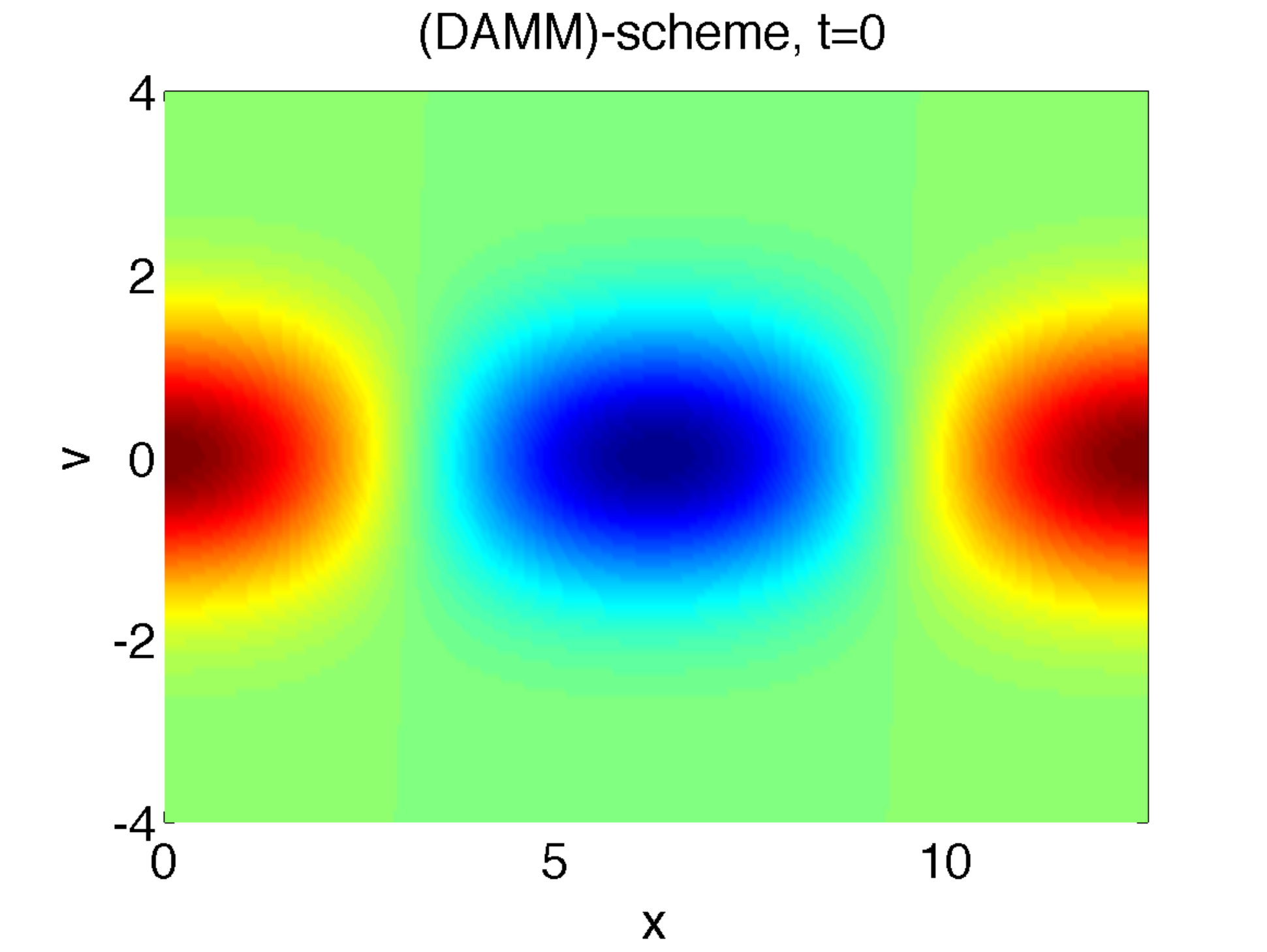}
	\caption{t=0.}
	\end{subfigure}
    \begin{subfigure}{.49\textwidth}
  	\centering
  	\includegraphics[width=\linewidth,trim={0cm 0cm 0cm 0cm},clip]{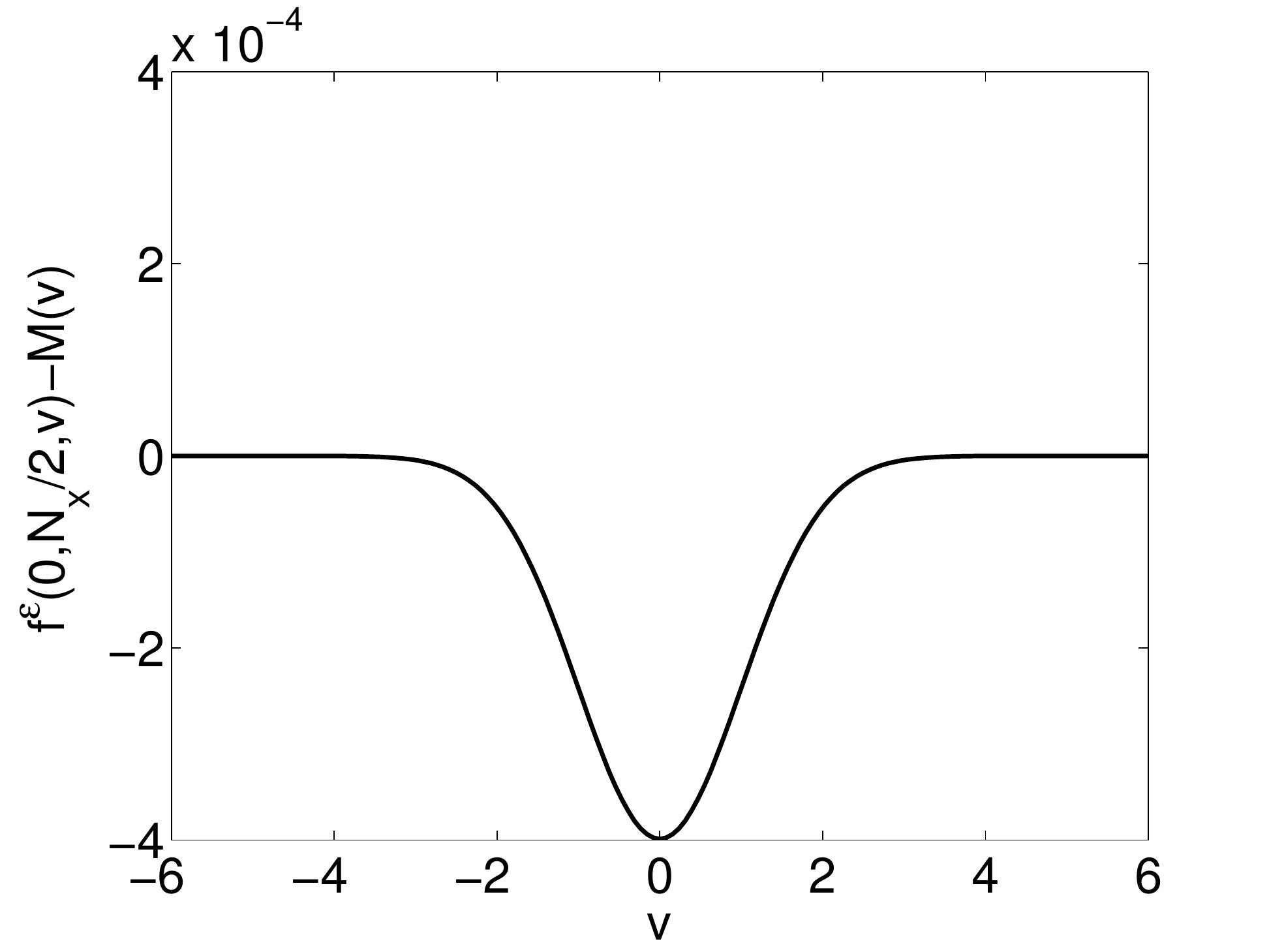}
	\caption{t=0.}
	\end{subfigure} \\ \vspace{0.5cm}
	\begin{subfigure}{.49\textwidth}
  	\centering
  	\includegraphics[width=\linewidth,trim={0cm 0cm 0cm 0cm},clip]{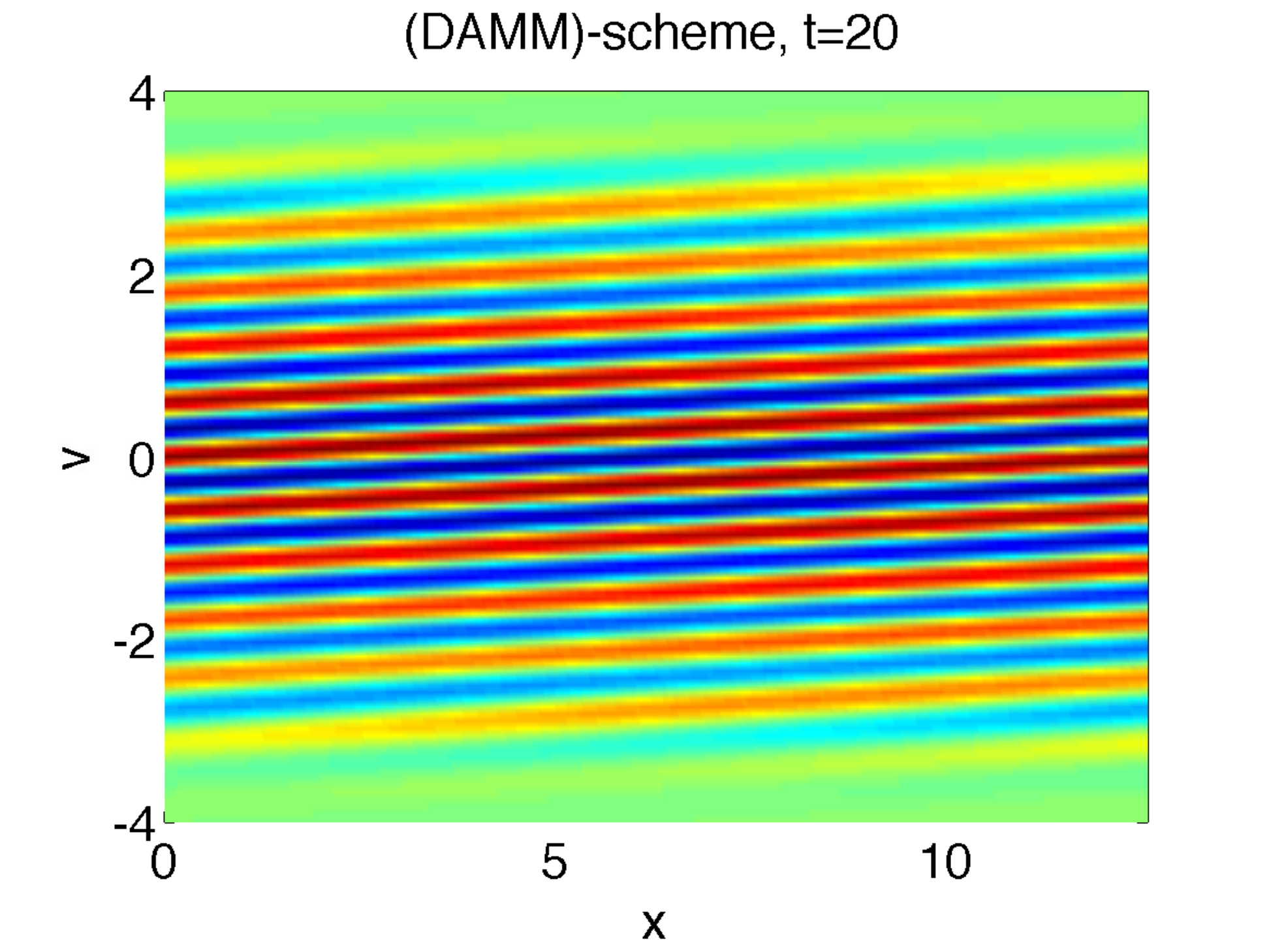}
	\caption{t=20.}
	\end{subfigure}
     \begin{subfigure}{.49\textwidth}
   	\centering
   	\includegraphics[width=\linewidth,trim={0cm 0cm 0cm 0cm},clip]{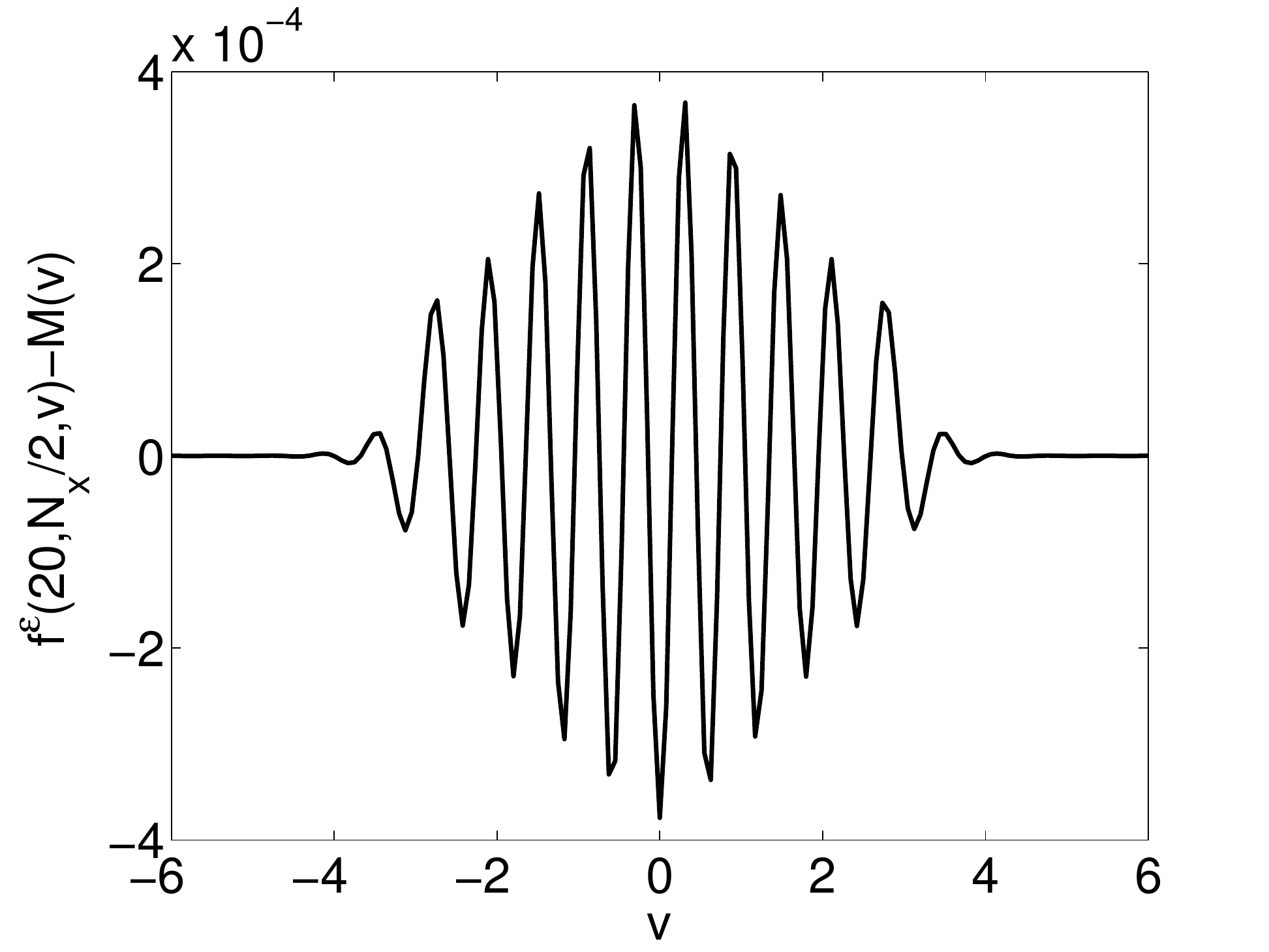}
 	\caption{t=20.}
 	\end{subfigure} \vspace{0.5cm}
 	\begin{subfigure}{.49\textwidth}   	
	\centering
   	\includegraphics[width=\linewidth,trim={0cm 0cm 0cm 0cm},clip]{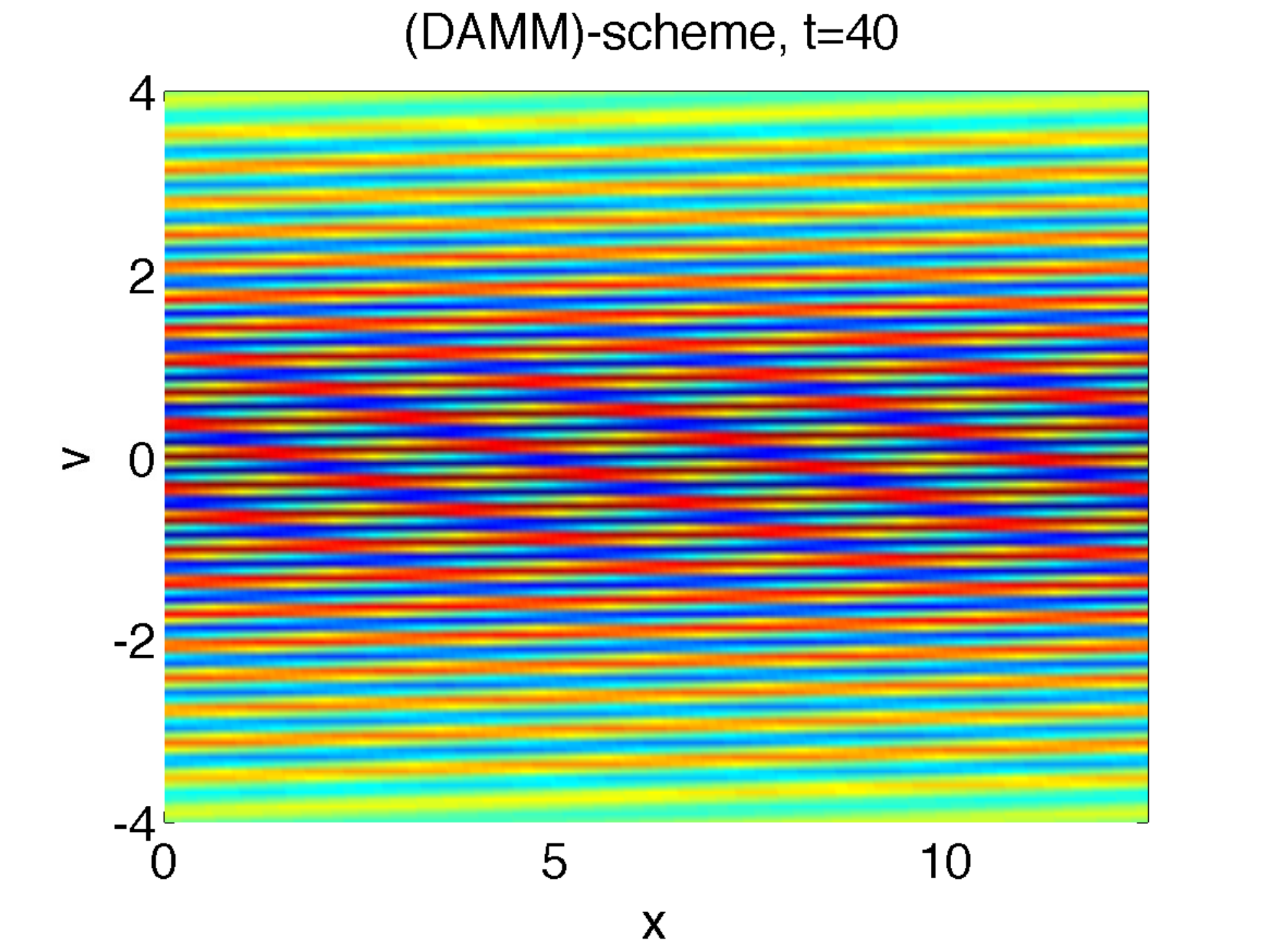}
 	\caption{t=40.}
 	\end{subfigure}
     \begin{subfigure}{.49\textwidth}
   	\centering
   	\includegraphics[width=\linewidth,trim={0cm 0cm 0cm 0cm},clip]{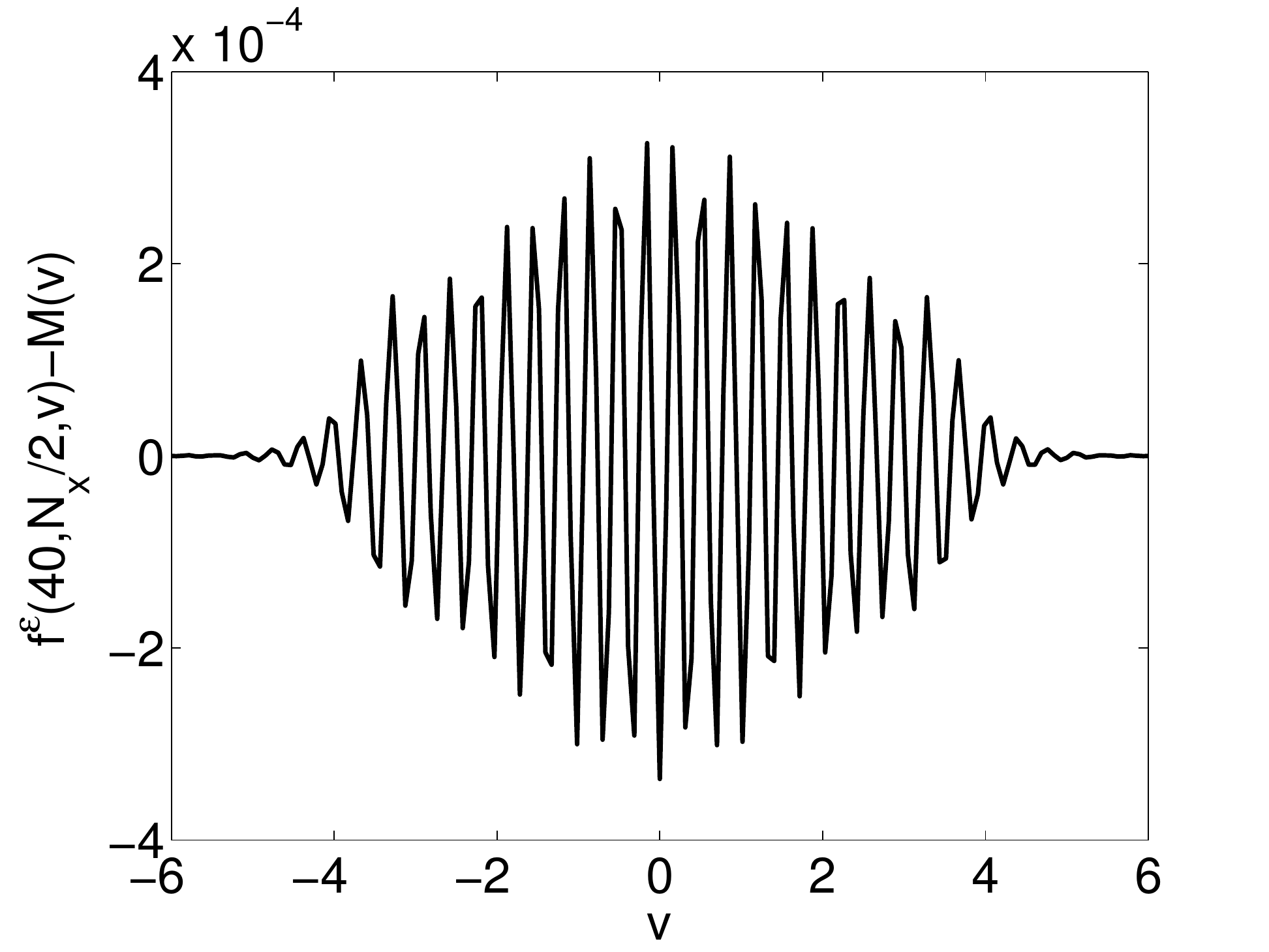}
 	\caption{t=40.}
 	\end{subfigure}
\caption{(Weak Landau damping for $\eps=1$) Zoom of the distribution function $f^{\eps}(t,x,v)-\mathcal{M}(v)$ at different times with $k=0.5$ and $\gamma=0.001$, obtained with the (DAMM)-scheme (A), (C) and (E) and cross-sections at $x=L_x/2$ of this latter (B), (D) and (F).  Mesh size : $N_x=N_y=256$. Other parameters were : $\Delta t = 0.01$, $T=40$, $\eps =1$ and $\sigma = (\Delta x/L_x)^2$.}
\label{cut_linear_landau_damping}
\end{figure}

\subsection{Numerical simulations for strong Landau damping and  $\eps=1$}

We shall perform now the numerical simulations for the non-linear Landau damping by taking a stronger perturbation as in the previous study. Nevertheless, we stay in the non-limit regime $\epsilon=1$. In Figure \ref{non_linear_damping}, we plot the distribution function $f^{\eps}(t,x,v)$ at different times, with $\gamma=k=0.3$ and the initial data \eqref{initial_data_vp}. Three levels can be pointed out. Up to $t=10$ (panel (A)), the linear effects dominate and the behavior of the electric energy is very close to the linear case. Then, starting from $t=20$ (panel (B)), the damping is stopped due to particle trapping, for finally leading to saturation at around $t=40$ (panel (D)). The phase-space trapping holes are clearly visible. In Figure \ref{non_linear_damping_cut}, we plot the space average of the distribution function at several times. The formation of several plateaus is clearly visible at time $t=10$ (panel (B)), indicating the trapping of particles in these areas. Over time, this trapping persists, although the numerical diffusion tends to damp these states. Indeed, due to the numerical dissipation, the filamentation is progressively eliminated when the filamentation scale become smaller than the velocity grid $\Delta v$. 

In Figure \ref{non_linear_energy} (A), we plot the evolution of the electric energy (in log-scale) as a function of time. Contrary to the weak Landau damping, the growth or decay rates of the oscillations are not known. Nevertheless, we can compare the (DAMM)-scheme to the reference spectral scheme. We observe a good correspondence between these two schemes. In order to carry on the investigations of the strong Landau damping, we look at the evolution of some particular quantities.  The Vlasov-Poisson system is well-known to conserve the total particle number (mass), the momentum, the total energy, the $L^p$-norms and the entropy. These quantities are given respectively by ($\Omega_x=(-L_x,L_x)$)

\begin{align}
M^{\eps}(t) &:= \int_{\mathbb{R} \times \Omega_x} f^{\eps}(t,x,v) \, dx \, dv \,, \\
\mathcal{M}_o^{\eps}(t)& := \int_{\mathbb{R} \times \Omega_x} v \, f^{\eps}(t,x,v) \, dx \, dv \,, \\
\mathcal{T}^{\eps}(t) &:= \mathcal{E}^{\eps} + \mathcal{P}^{\eps}\,, \\
C_p^{\eps}(t) &:= \Bigg(\int_{\mathbb{R} \times \Omega_x} |f^{\eps}(t,x,v)|^p \, dx \, dv \,\Bigg)^{1/p} \,, \\
S^{\eps}(t) &:= \int_{\mathbb{R} \times \Omega_x} - f^{\eps}(t,x,v) \, \ln( f^{\eps}(t,x,v)) \, dx \, dv \,.
\end{align}

Due to the presence of the stabilization parameter $\sigma$, the conserved quantities introduced previously are no more constant over time when computed via the (DAMM)-scheme. We investigated this in the panels of Figure \ref{non_linear_energy}. In particular in the panel (B) we see that the total energy is not conserved by the (DAMM)-scheme with $3 \, \%$ of deviation from its initial value. Analogous observations can be done for the mass (panel (C)), the entropy (panel (D)) and the $L^2$-norm (panel (F)). Nevertheless, the (DAMM)-scheme conserves the momentum (which is null for the initial condition \eqref{initial_data_vp}), unlike the spectral scheme. Despite the non-conservation of these quantities, their deviations from their initial value stay weak.\\ 

Having carefully considered the Landau damping through several numerical simulations performed by the (DAMM)-scheme, we are interested now in the study of the two-stream instability. Since the Landau damping does not attain an equilibrium (due to the continuous filamentation), it is not suitable for investigating the limit regime $\eps \to 0$.  As we will see, things are different in the case of the two-stream instability, which permit to investigate the limit $\eps \to 0$. 

\begin{figure}[ht]
\centering
	\begin{subfigure}{.42\textwidth}
  	\centering
  	\includegraphics[width=\linewidth,trim={0cm 0cm 0cm 0cm},clip]{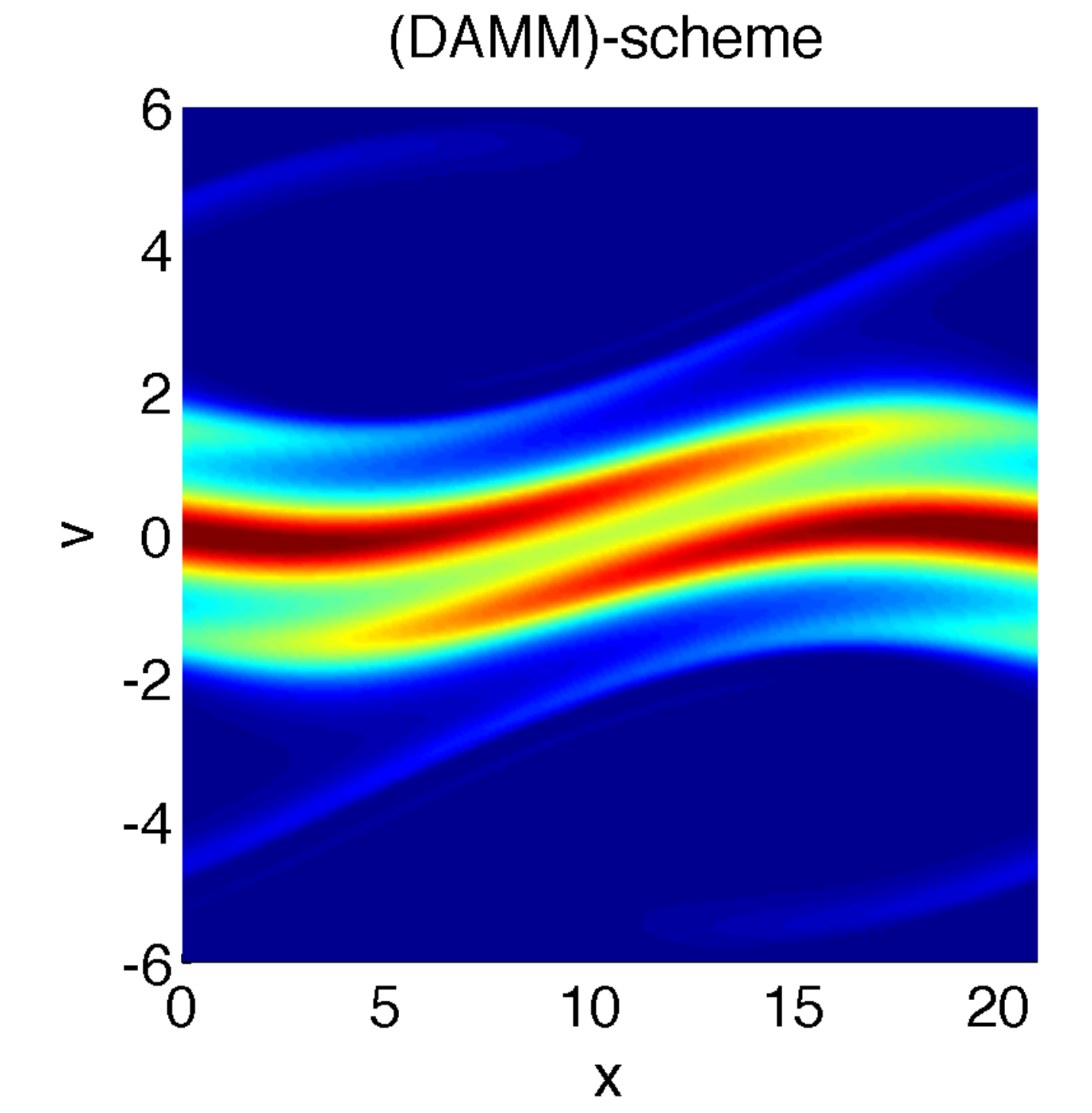}
	\caption{t=10.}
	\end{subfigure}
    \begin{subfigure}{.42\textwidth}
  	\centering
  	\includegraphics[width=\linewidth,trim={0cm 0cm 0cm 0cm},clip]{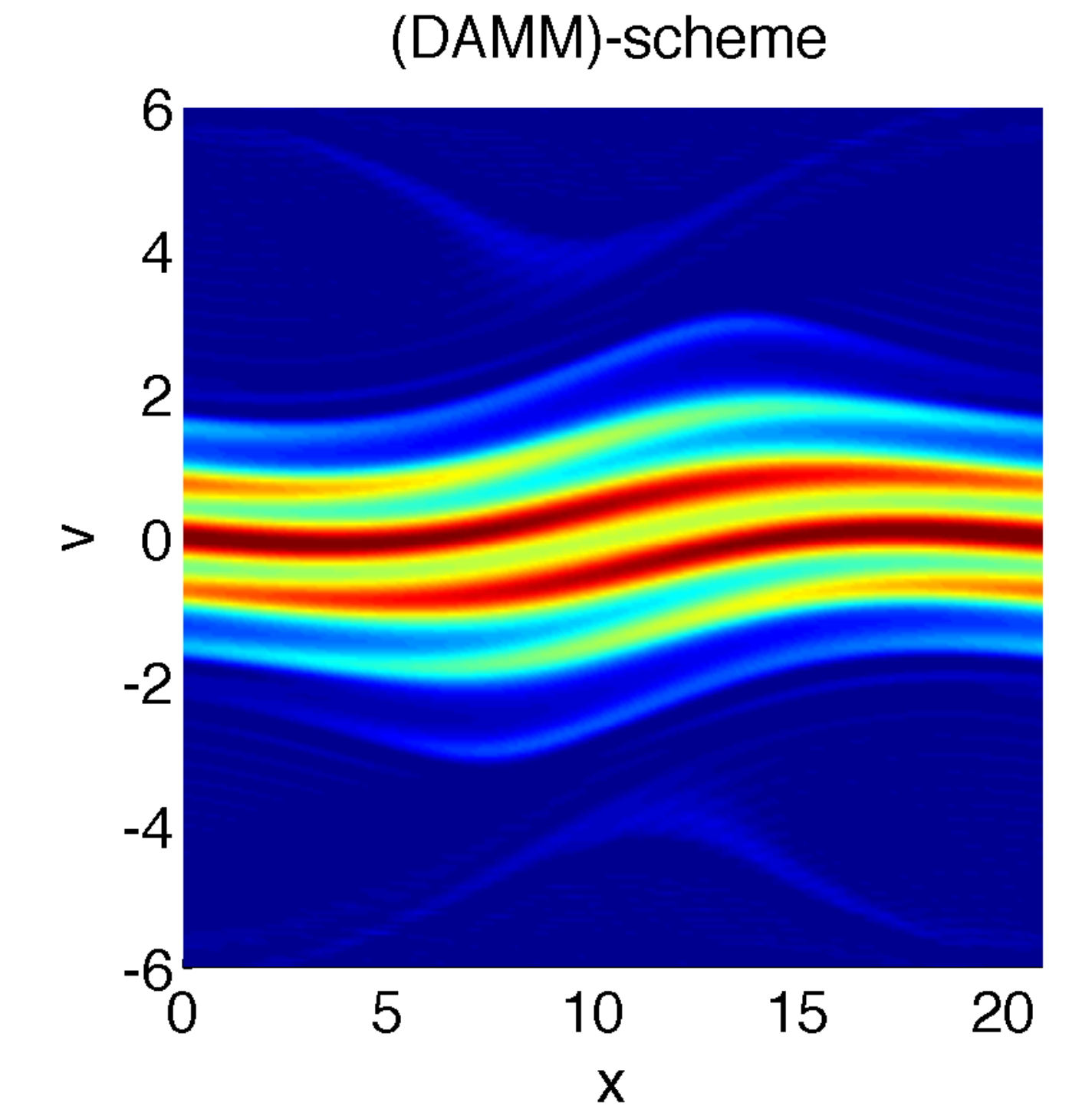}
	\caption{t=20.}
	\end{subfigure} \\ \vspace{0.5cm}
	\begin{subfigure}{.42\textwidth}
  	\centering
  	\includegraphics[width=\linewidth,trim={0cm 0cm 0cm 0cm},clip]{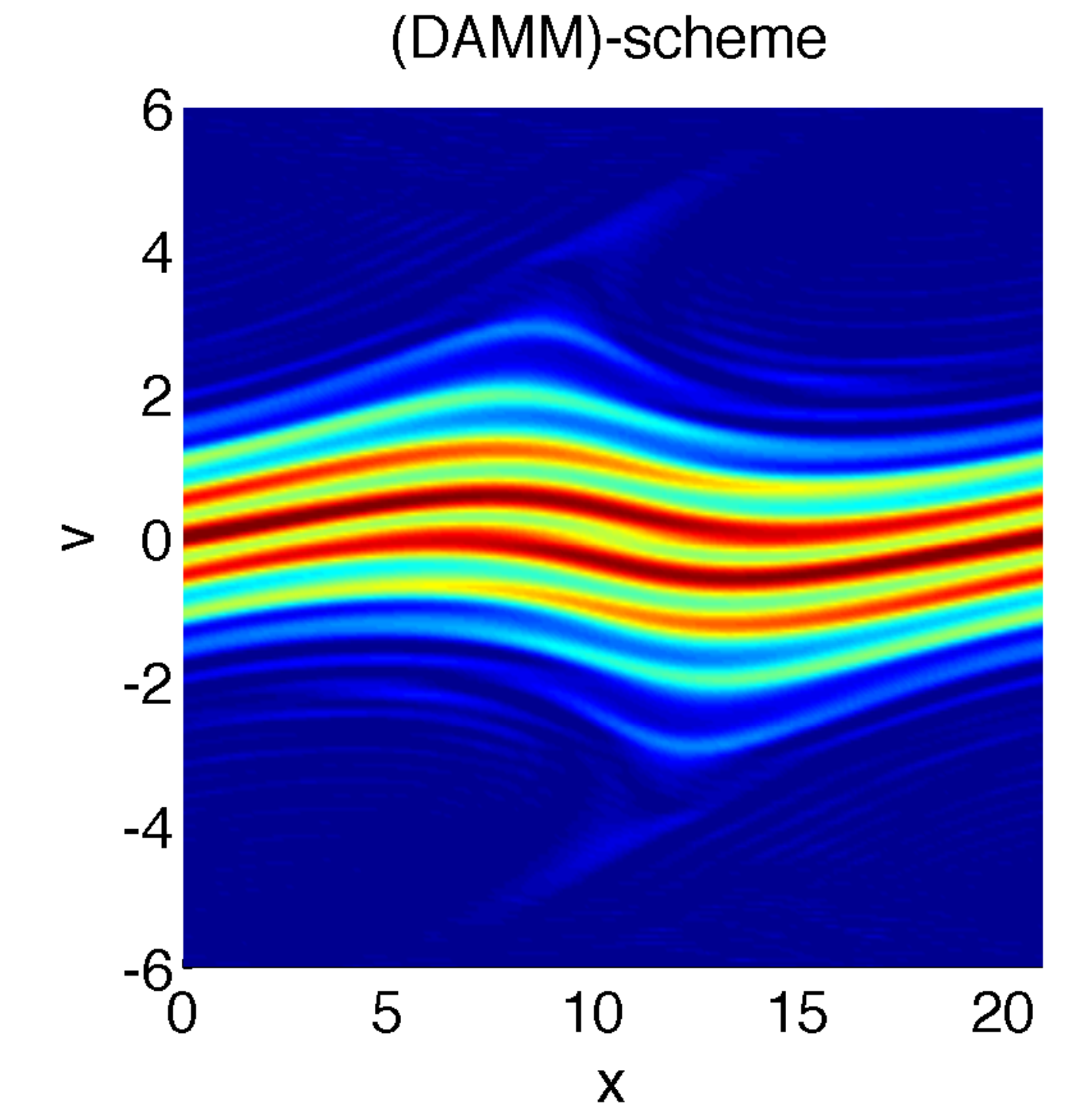}
	\caption{t=30.}
	\end{subfigure}
     \begin{subfigure}{.42\textwidth}
   	\centering
   	\includegraphics[width=\linewidth,trim={0cm 0cm 0cm 0cm},clip]{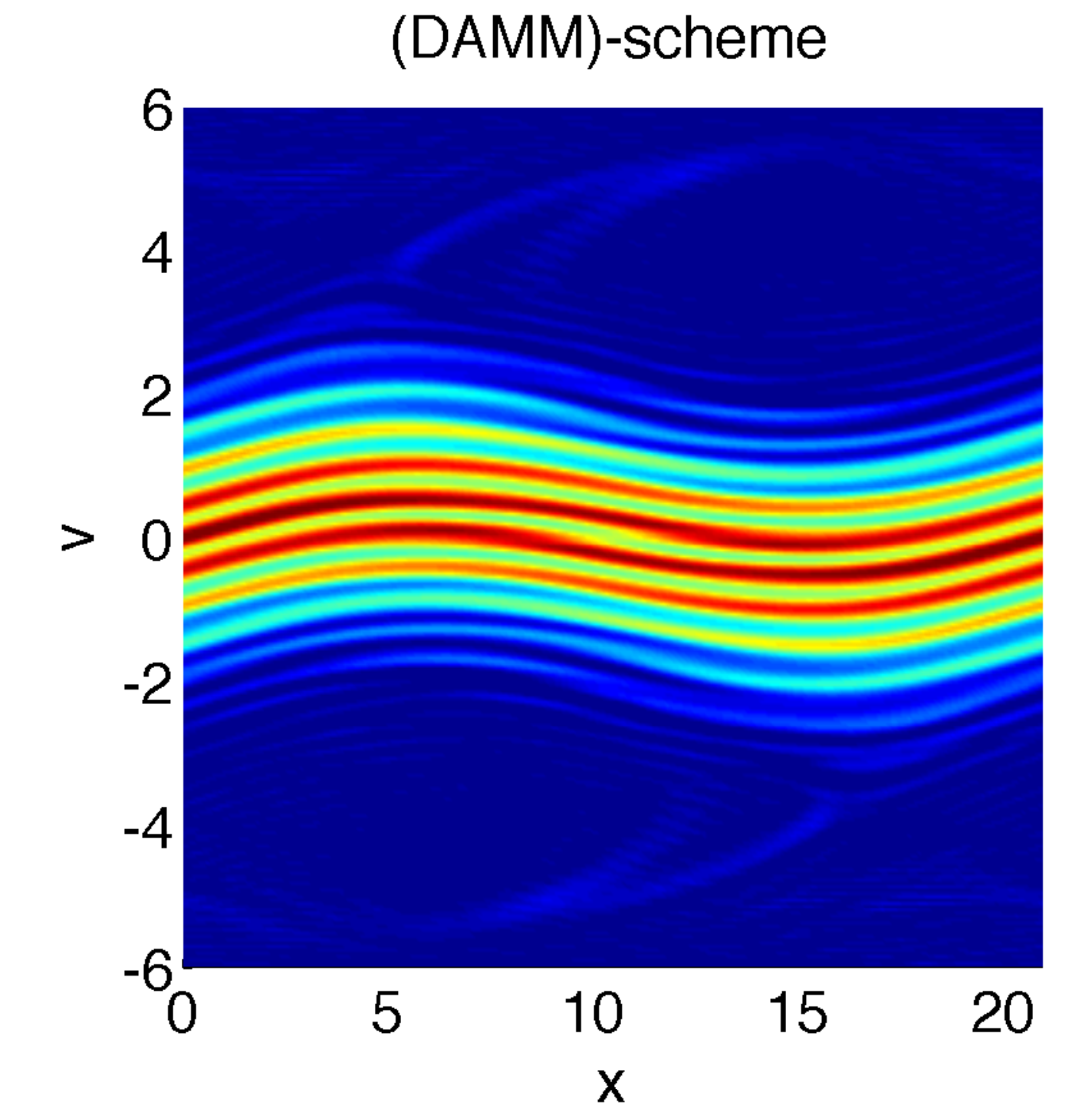}
 	\caption{t=40.}
 	\end{subfigure} \vspace{0.5cm}
 	\begin{subfigure}{.42\textwidth}   	
	\centering
   	\includegraphics[width=\linewidth,trim={0cm 0cm 0cm 0cm},clip]{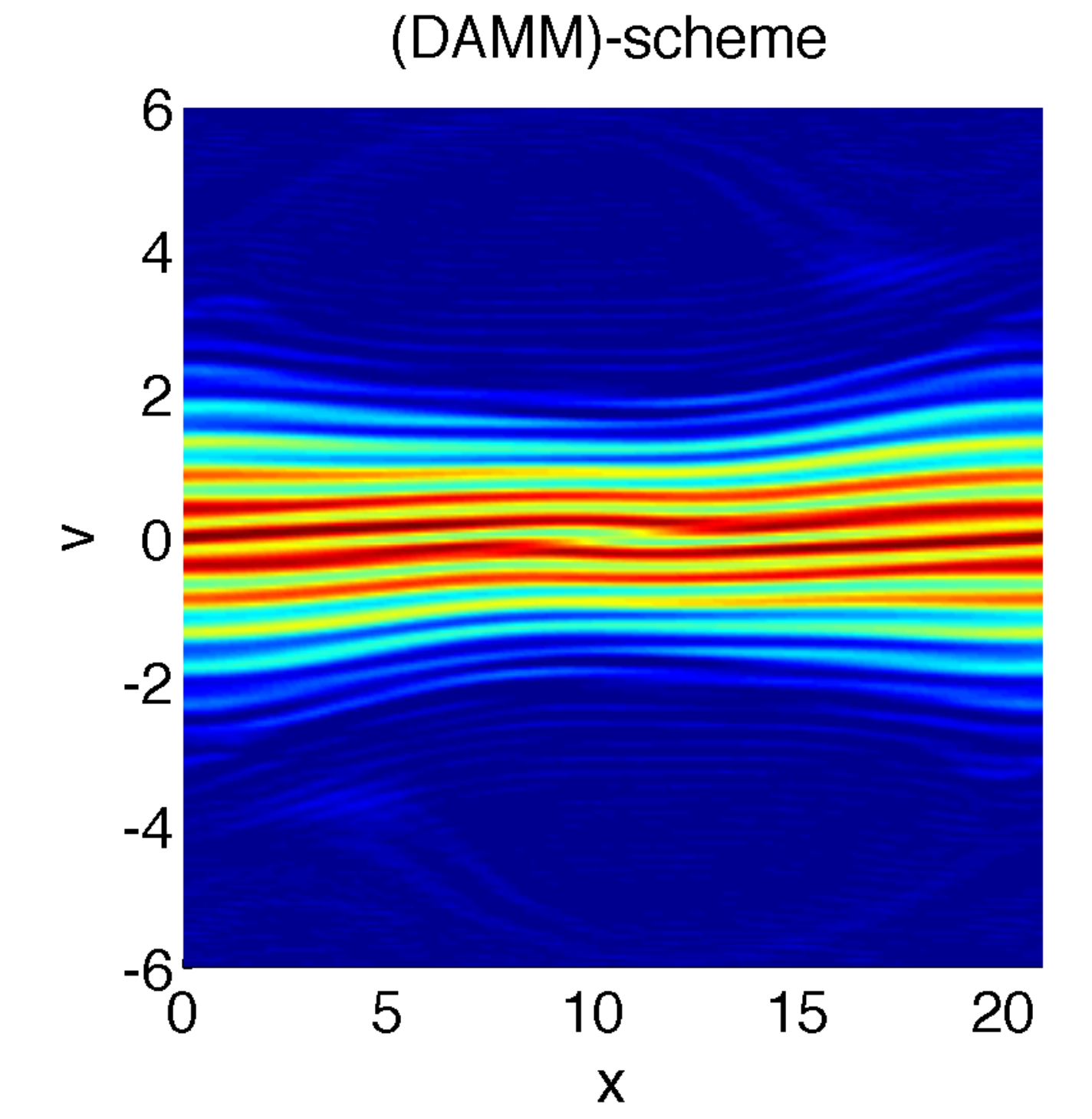}
 	\caption{t=50.}
 	\end{subfigure}
     \begin{subfigure}{.42\textwidth}
   	\centering
   	\includegraphics[width=\linewidth,trim={0cm 0cm 0cm 0cm},clip]{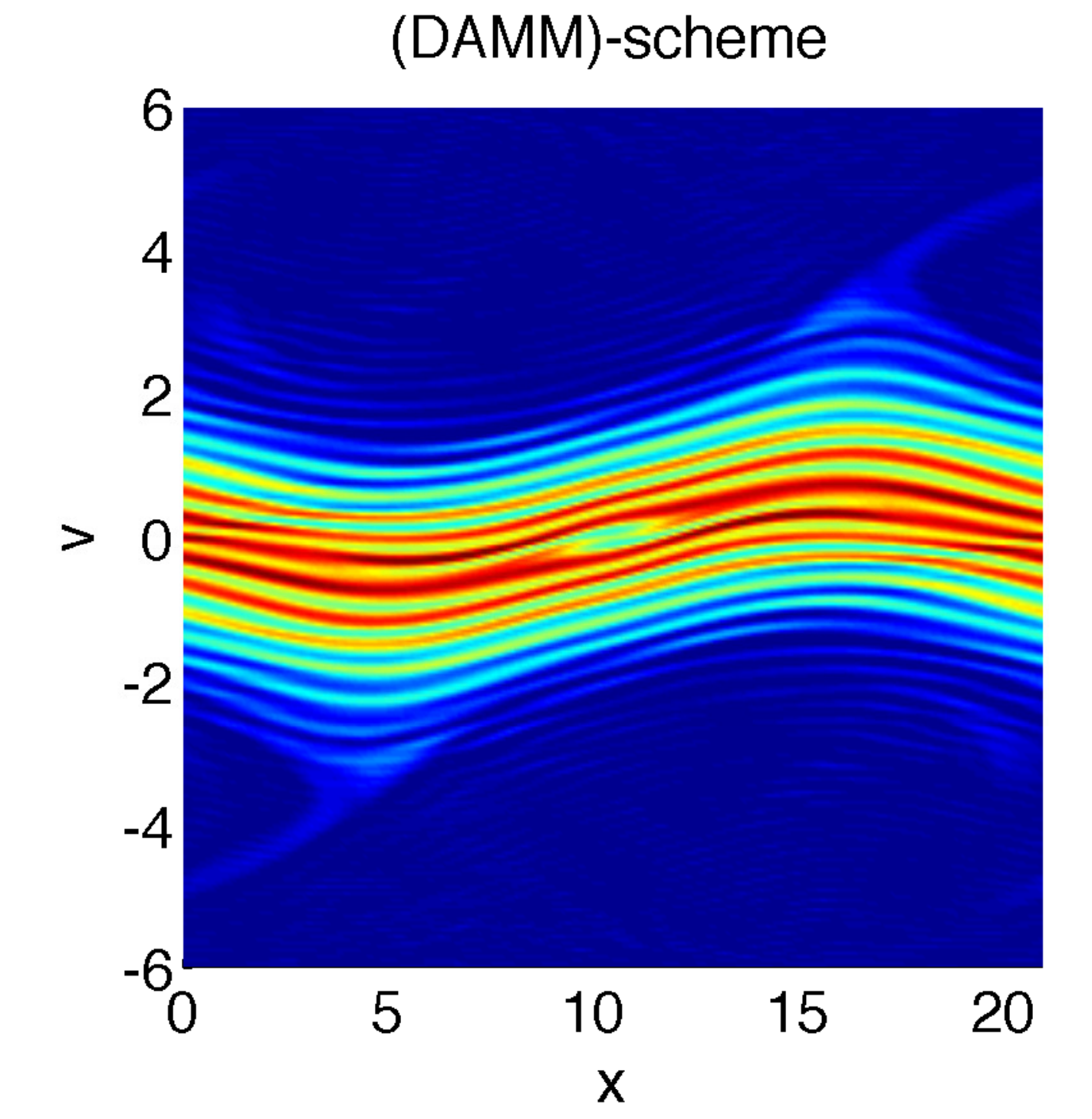}
 	\caption{t=60.}
 	\end{subfigure}
\caption{(Strong Landau damping for $\epsilon=1$) Zoom of the distribution function $f^{\eps}(t,x,v)$ at different times with $k=\gamma=0.3$, obtained with the (DAMM)-scheme. Mesh size : $N_x=N_y=256$.  Parameters were : $\Delta t = 0.01$, $T=60$, $\eps =1$ and $\sigma = (\Delta x/L_x)^2$.}
\label{non_linear_damping}
\end{figure}

\begin{figure}[ht]
\centering
	\begin{subfigure}{.49\textwidth}
  	\centering
  	\includegraphics[width=\linewidth,trim={0cm 0cm 0cm 0cm},clip]{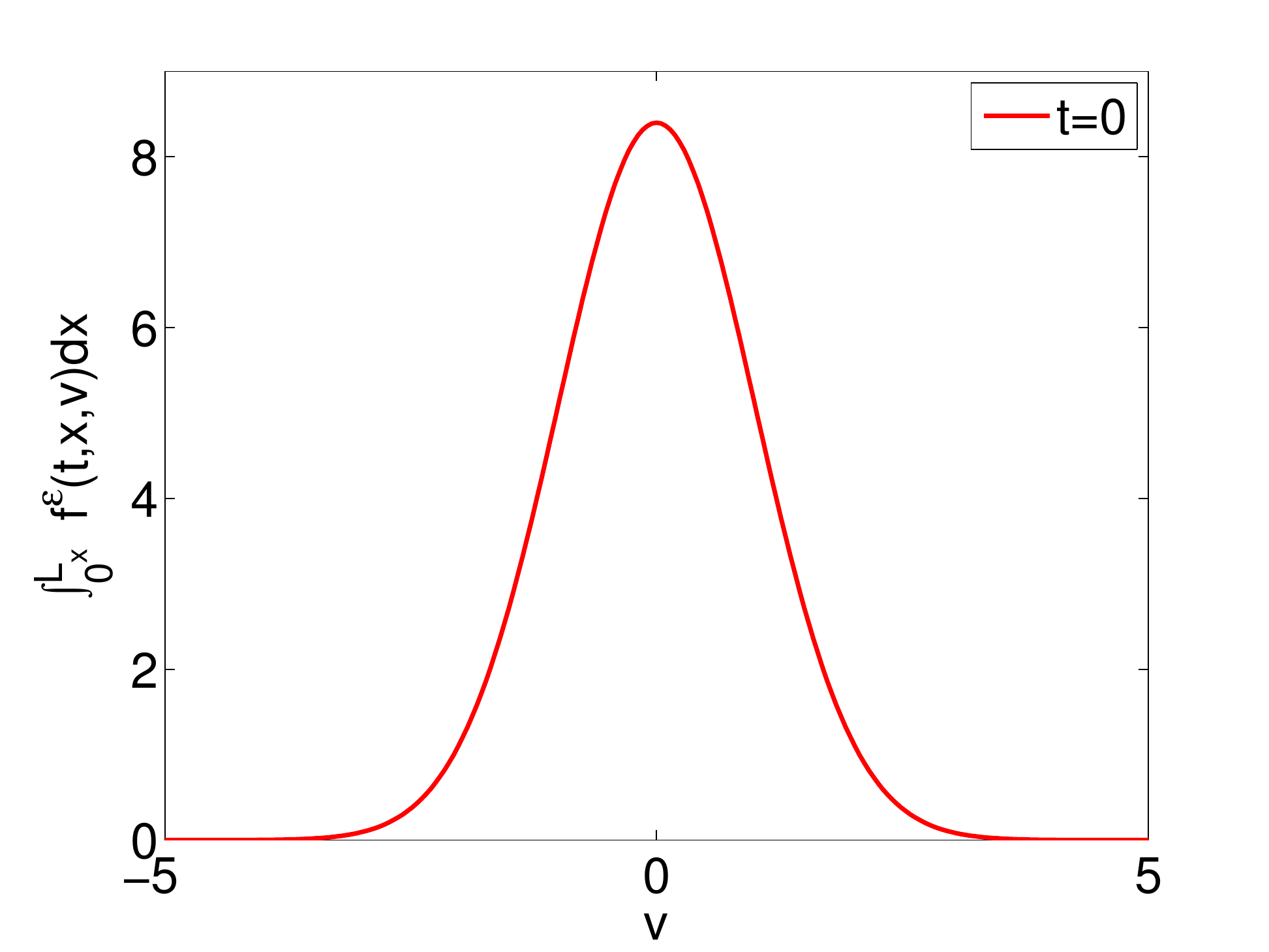}
	\caption{t=0.}
	\end{subfigure}
    \begin{subfigure}{.49\textwidth}
  	\centering
  	\includegraphics[width=\linewidth,trim={0cm 0cm 0cm 0cm},clip]{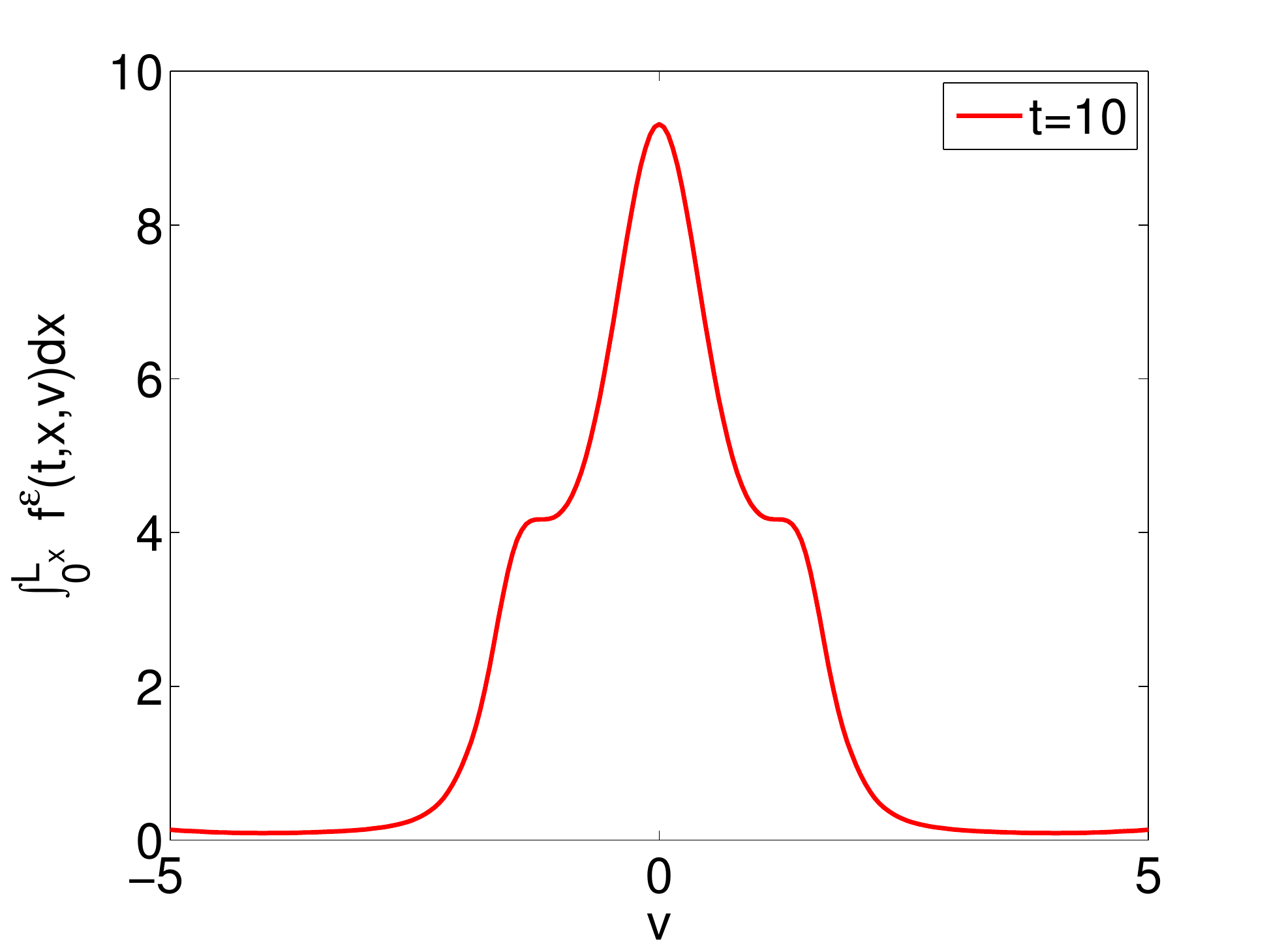}
	\caption{t=10.}
	\end{subfigure} \\ \vspace{0.5cm}
	\begin{subfigure}{.49\textwidth}
  	\centering
  	\includegraphics[width=\linewidth,trim={0cm 0cm 0cm 0cm},clip]{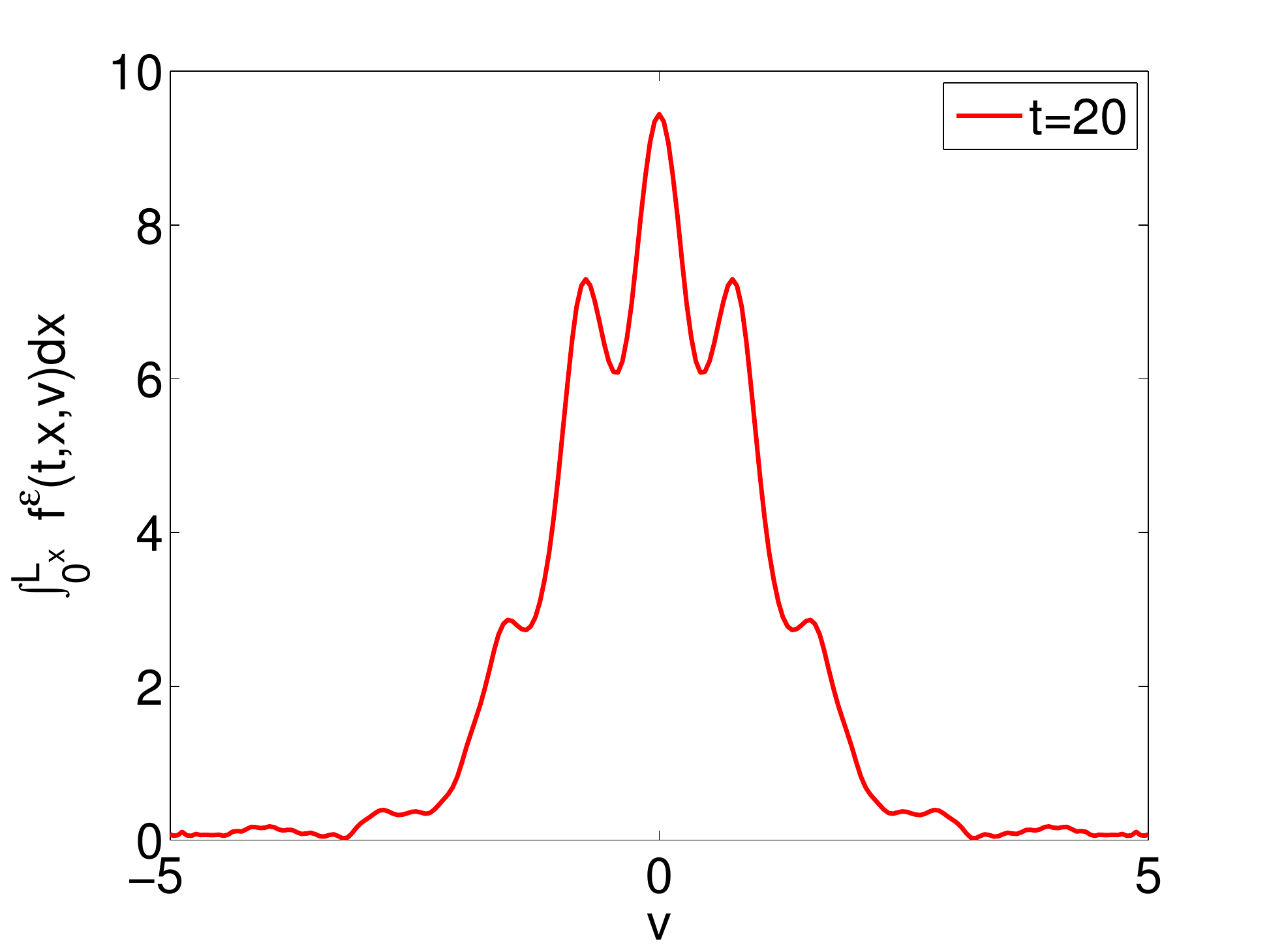}
	\caption{t=20.}
	\end{subfigure}
     \begin{subfigure}{.49\textwidth}
   	\centering
   	\includegraphics[width=\linewidth,trim={0cm 0cm 0cm 0cm},clip]{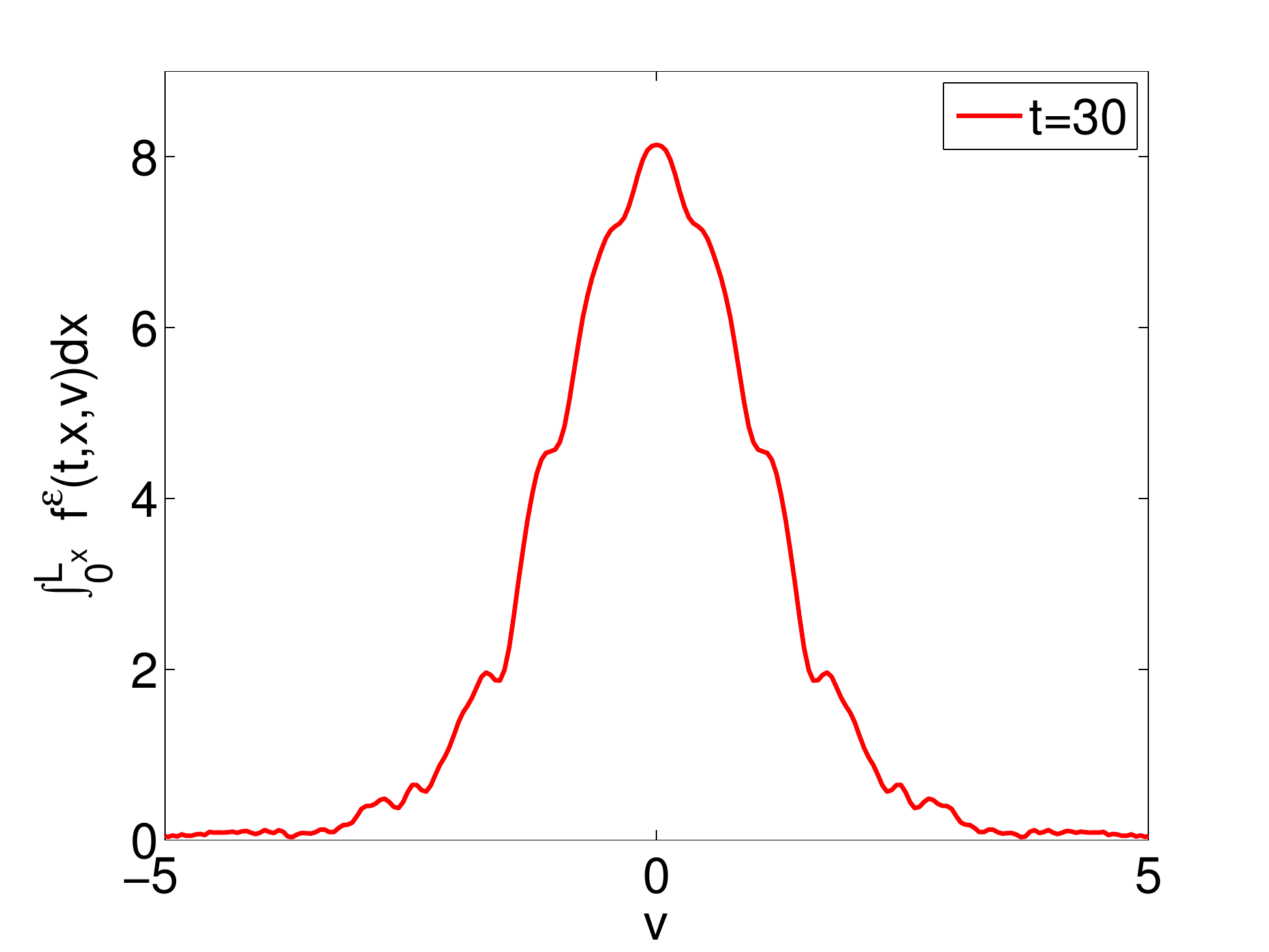}
 	\caption{t=30.}
 	\end{subfigure} \vspace{0.5cm}
 	\begin{subfigure}{.49\textwidth}   	
	\centering
   	\includegraphics[width=\linewidth,trim={0cm 0cm 0cm 0cm},clip]{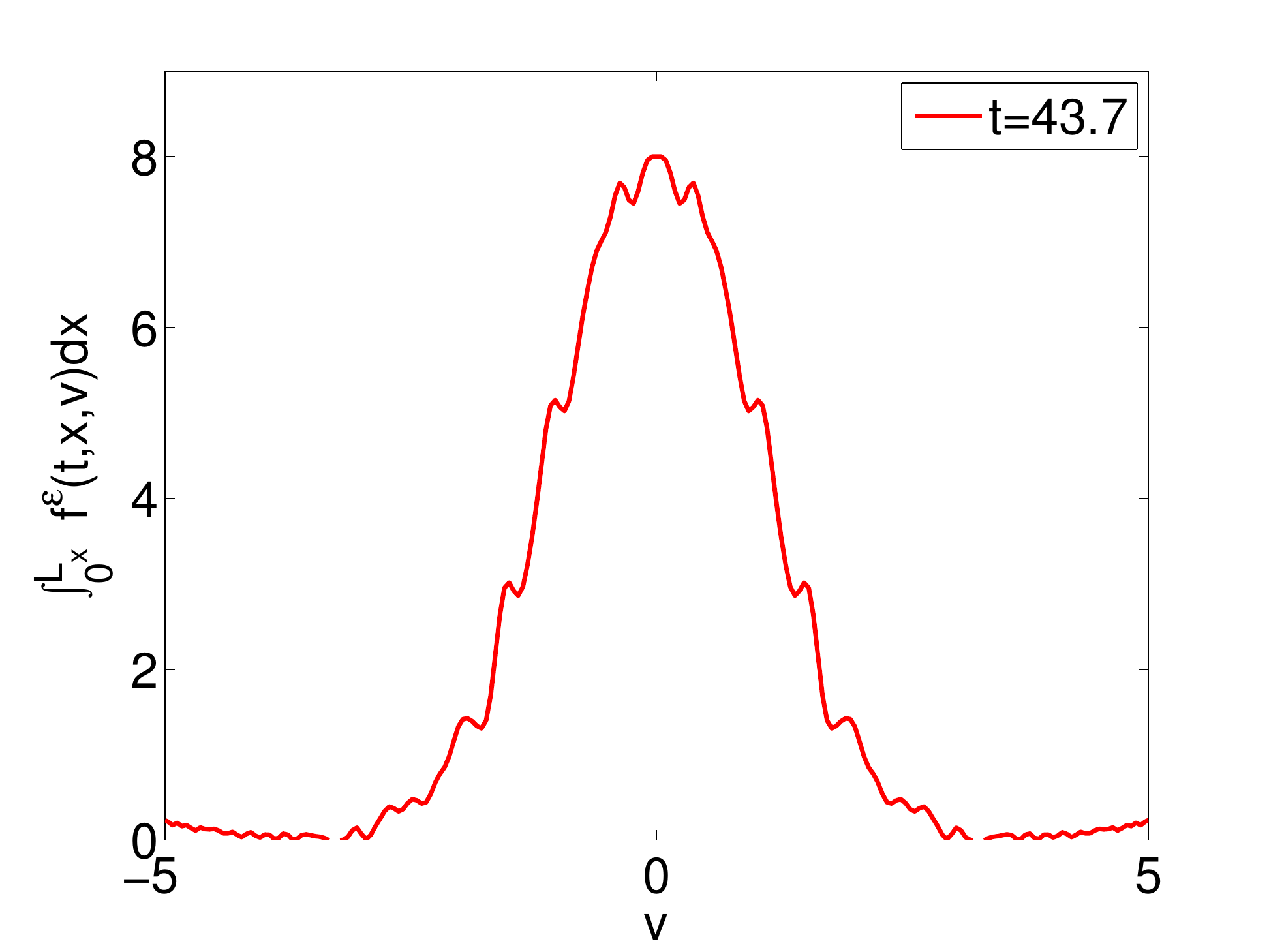}
 	\caption{t=43.7.}
 	\end{subfigure}
     \begin{subfigure}{.49\textwidth}
   	\centering
   	\includegraphics[width=\linewidth,trim={0cm 0cm 0cm 0cm},clip]{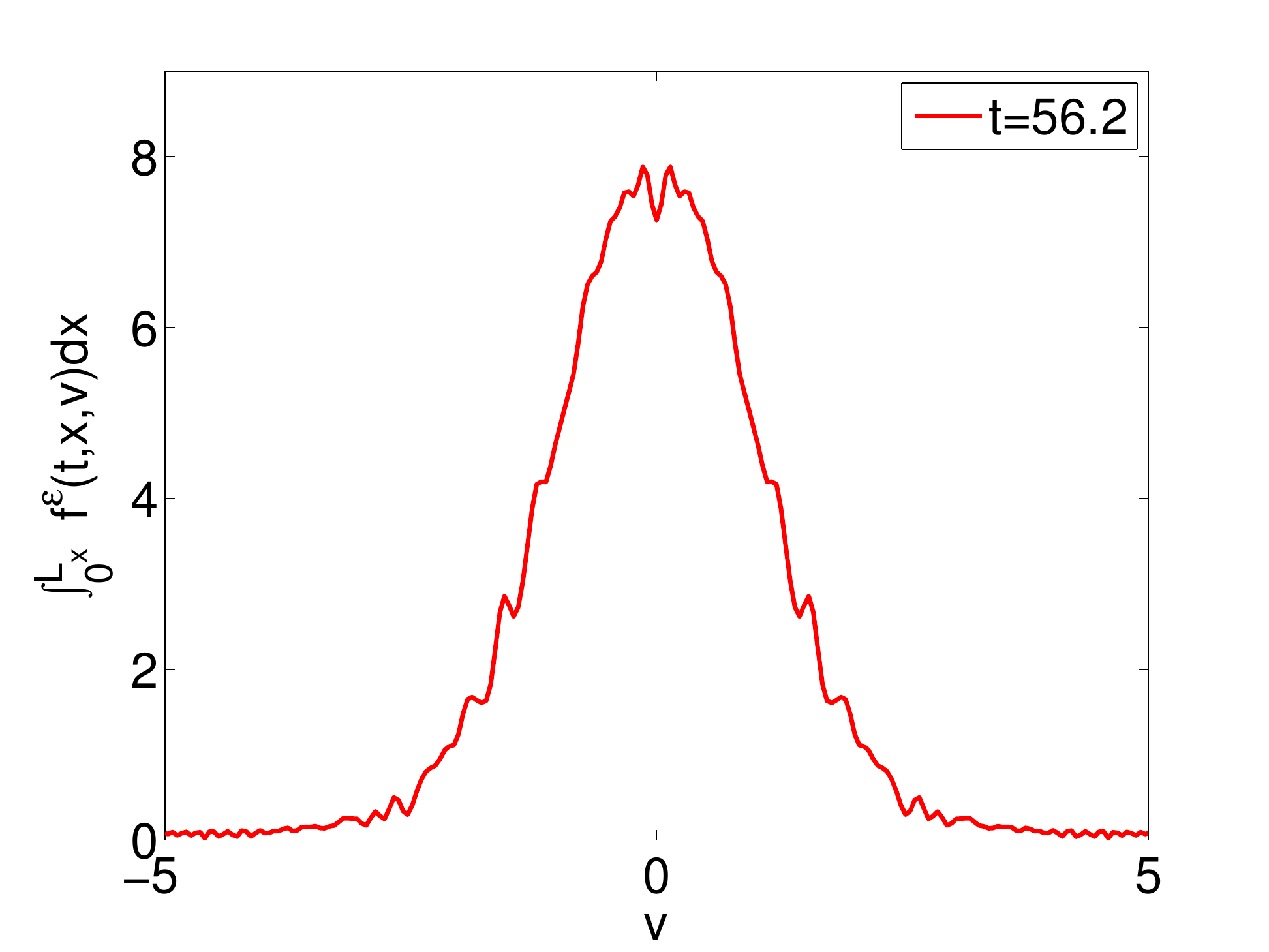}
 	\caption{t=56.2.}
 	\end{subfigure}
\caption{(Strong Landau damping for $\eps=1$) Spatial average of the distribution function $f^{\eps}(t,x,v)$ at different times with $k=\gamma=0.3$, obtained with the (DAMM)-scheme. Mesh size : $N_x=N_y=256$.  Parameters were : $\Delta t = 0.01$, $T=60$, $\eps =1$ and $\sigma = (\Delta x/L_x)^2$.}
\label{non_linear_damping_cut}
\end{figure}

\begin{figure}[ht]
\centering
	\begin{subfigure}{.48\textwidth}
  	\centering
  	\includegraphics[width=\linewidth,trim={0cm 0cm 0cm 0cm},clip]{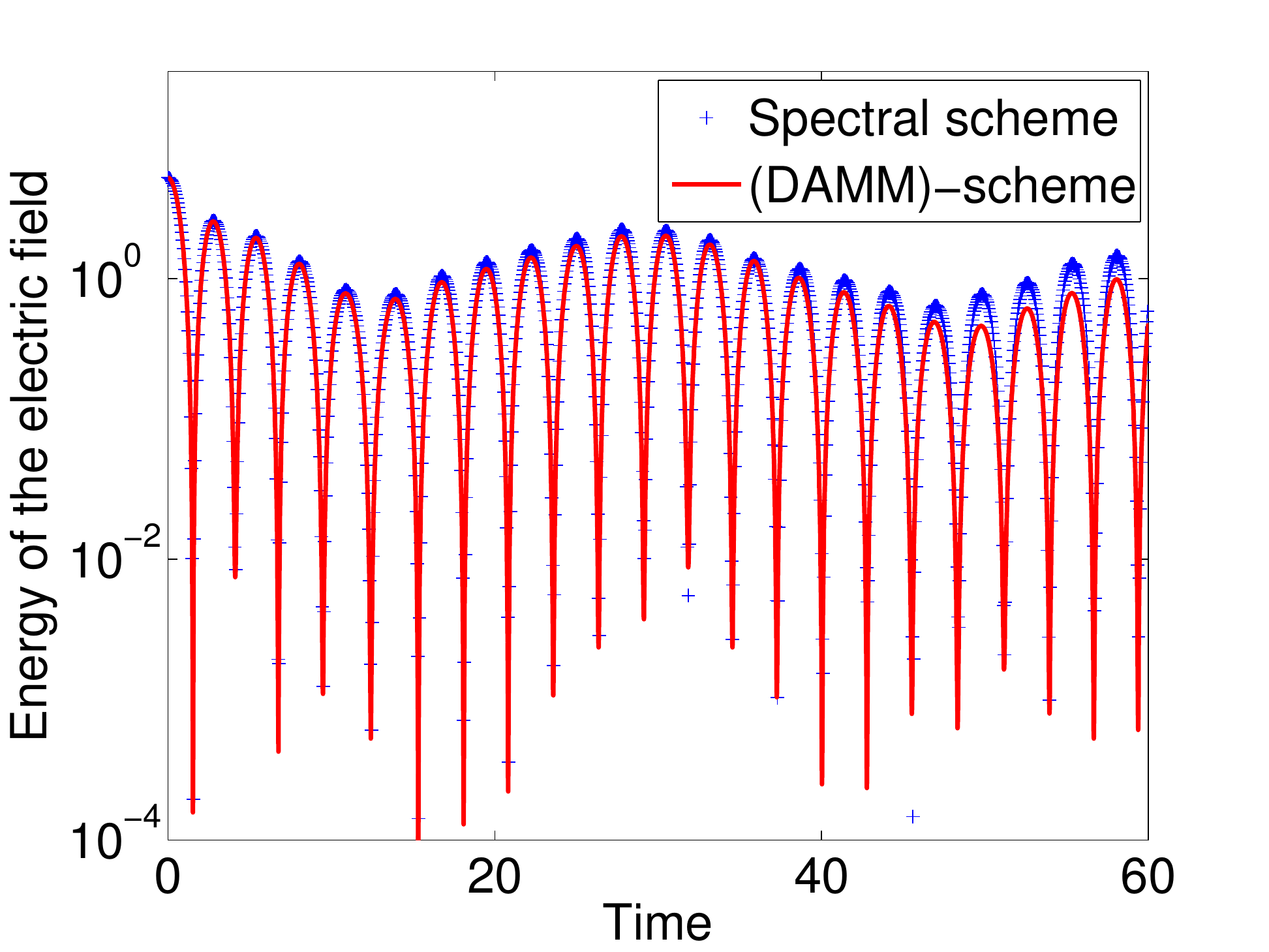}
	\caption{Strong Landau damping.}
	\end{subfigure}
    \begin{subfigure}{.48\textwidth}
  	\centering
  	\includegraphics[width=\linewidth,trim={0cm 0cm 0cm 0cm},clip]{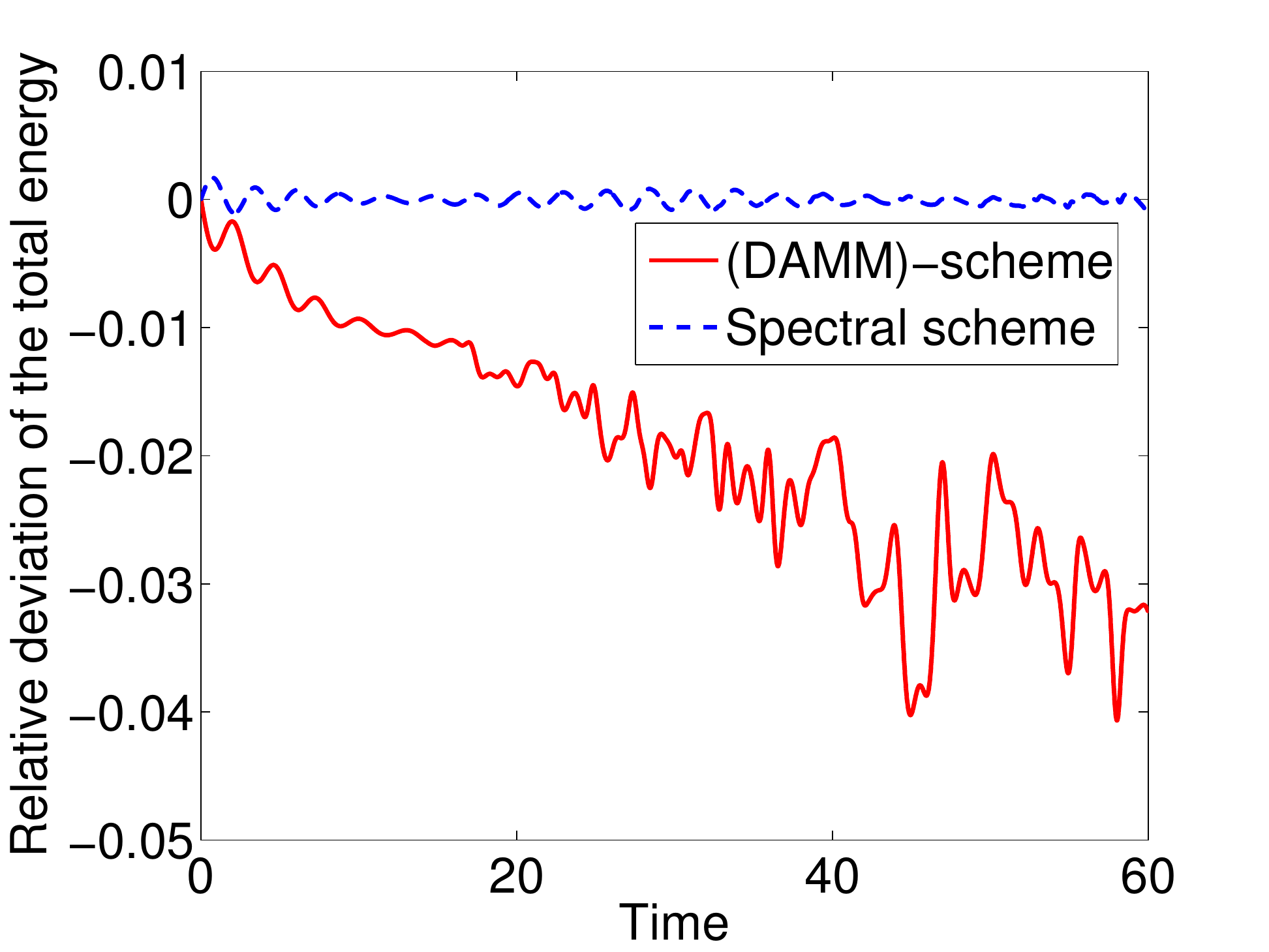}
	\caption{Total energy.}
	\end{subfigure} \\ 
	\begin{subfigure}{.48\textwidth}
  	\centering
  	\includegraphics[width=\linewidth,trim={0cm 0cm 0cm 0cm},clip]{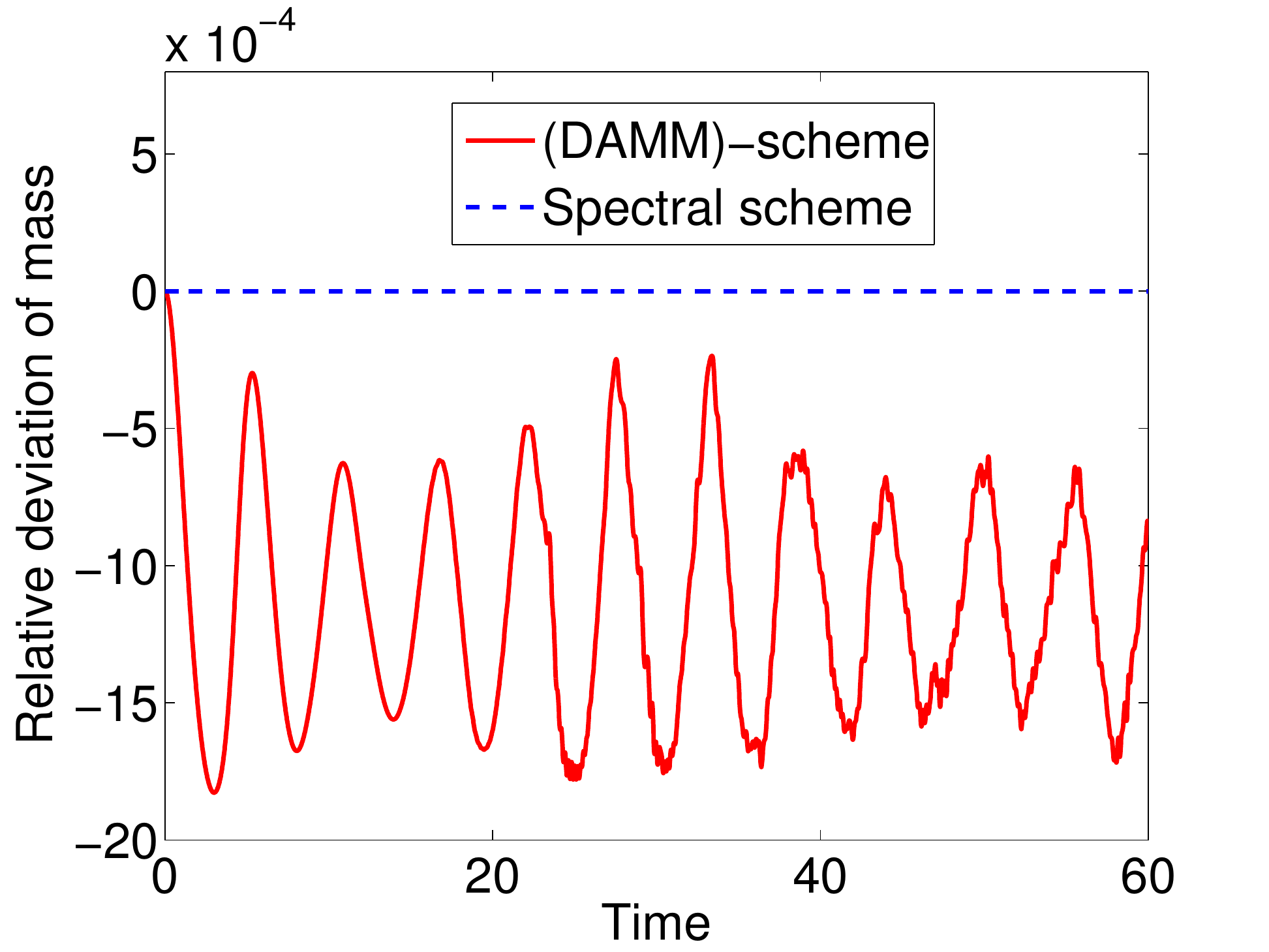}
	\caption{Mass.}
	\end{subfigure}
    \begin{subfigure}{.48\textwidth}
  	\centering
  	\includegraphics[width=\linewidth,trim={0cm 0cm 0cm 0cm},clip]{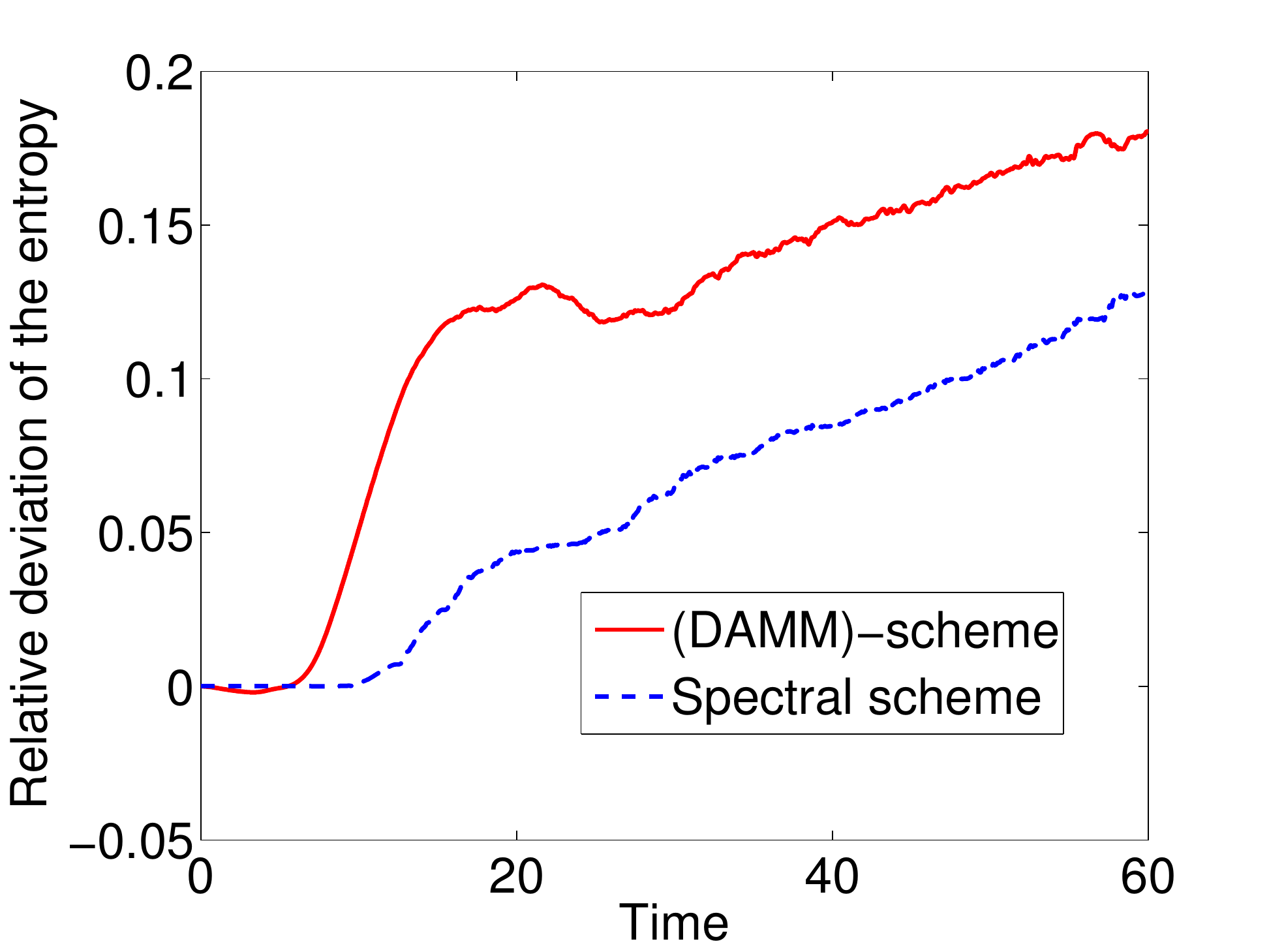}
	\caption{Entropy.}
	\end{subfigure} \\ 
	\begin{subfigure}{.48\textwidth}
  	\centering
  	\includegraphics[width=\linewidth,trim={0cm 0cm 0cm 0cm},clip]{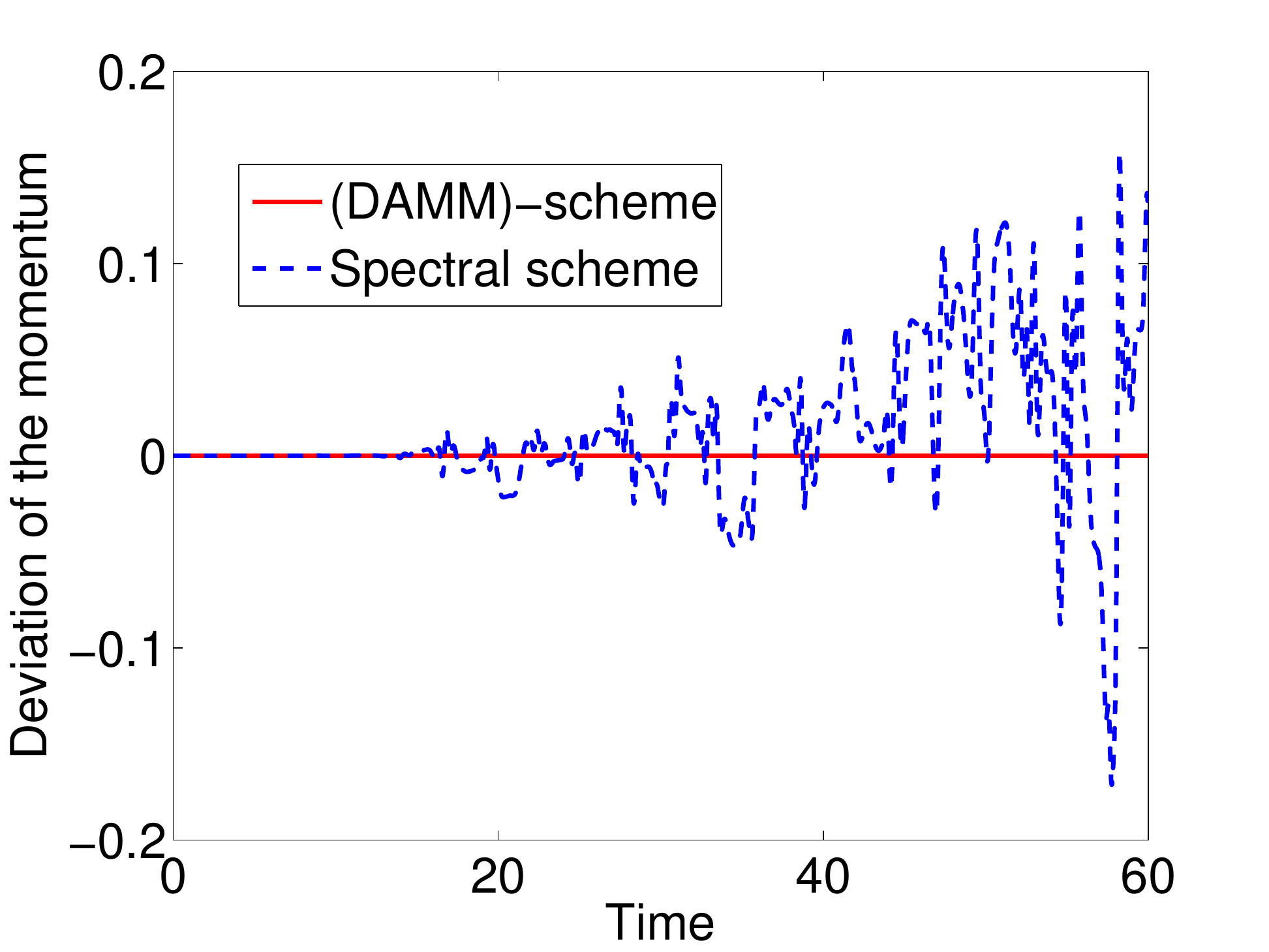}
	\caption{Momentum.}
	\end{subfigure}
    \begin{subfigure}{.48\textwidth}
  	\centering
  	\includegraphics[width=\linewidth,trim={0cm 0cm 0cm 0cm},clip]{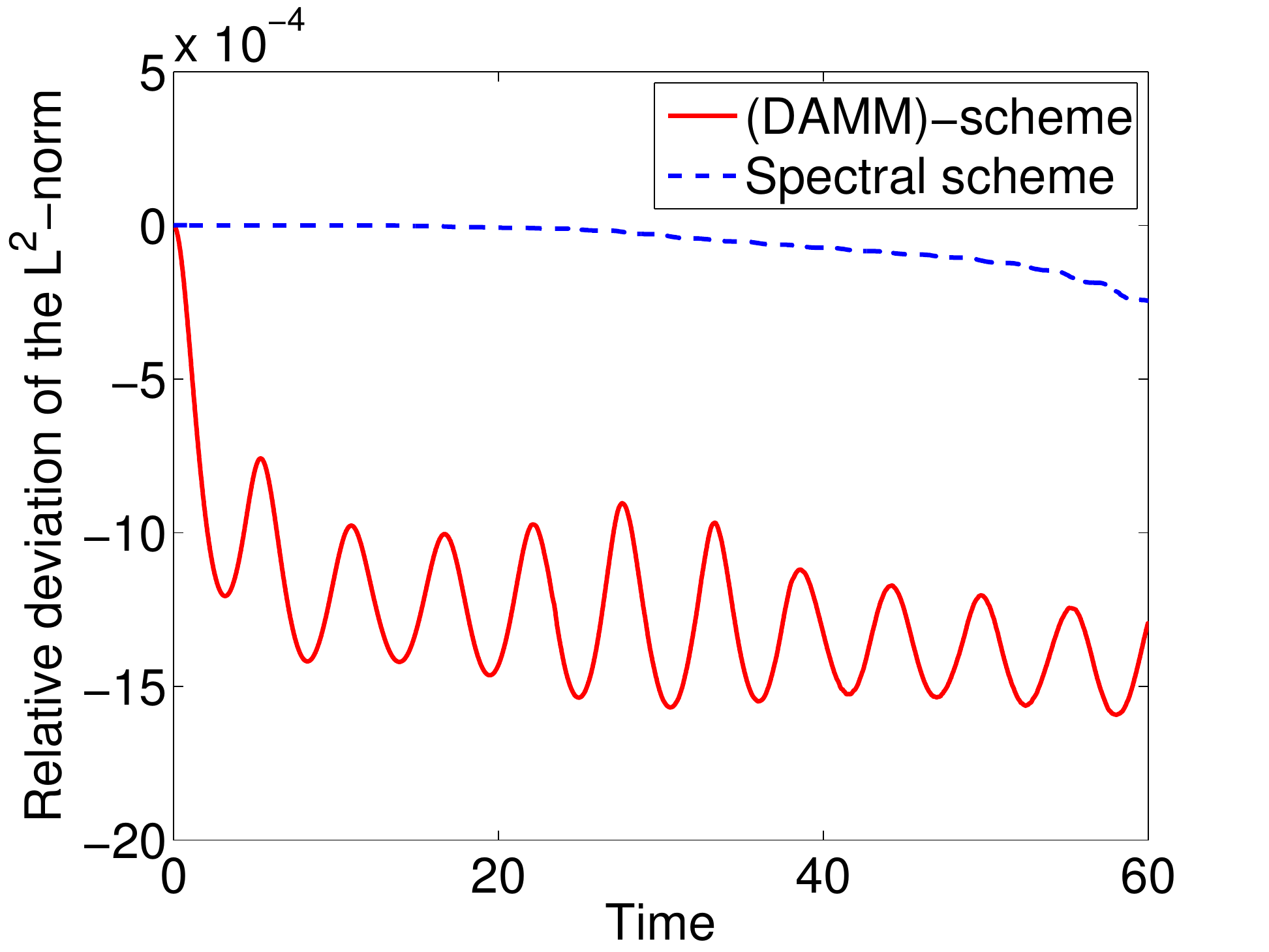}
	\caption{$L^2$-norm.}
	\end{subfigure} \\ 
\caption{(Strong Landau damping for $\eps=1$) Energy of the electric field versus time (A) and deviation over time for both (DAMM) and spectral schemes of several quantities (B), (C), (D), (E) and (F). Mesh size: $N_x=N_y=256$. $T=60$, $\Delta t=0.01$, $\eps=1$, and $\sigma = (\Delta x /L_x)^2$.}
\label{non_linear_energy}
\end{figure}

\subsection{Numerical simulations of the two-stream instability, study of the limit case $\epsilon \to 0$.}

\subsubsection{Two-stream instability in the non-limit case $\eps=1$}

The two stream instability can be thought as the inverse of the Landau damping. It occurs when the velocity of the particles is slightly greater than the wave velocity $v_{\phi}$. The instability causes a transfer of energy, from the particles to electric field, unlike the Landau damping phenomenon where the exchange of energy occurs from the electric field to the particles. Thus, to simulate this instability, one imposes the following initial data:

\be \label{initial_data_ts}
f_{in}^1(x,v) =\frac{1}{\sqrt{2\pi}}\,(1+\gamma \cos(kx))\,\frac{1}{2}\,(e^{-(v-3)^2/2}+e^{-(v+3)^2/2})\,.
\ee

As a first step, we keep $\eps=1$. We choose $\gamma=0.001$, $k=0.2$, $L_v=10$, $L_x=2\,\pi/k$ and $\sigma = (\Delta x/L_x)^2$. In Figure \ref{two_streams_instability_damm}, we plot the distribution function $f^{\eps}(t,x,v)$, solution of \eqref{Vlasov_Poisson_bracket} at different times, with the previous initial condition \eqref{initial_data_ts}. The panels (A) and (C) refer to the (DAMM)-scheme whereas the panels (B) and (D) correspond to the reference spectral scheme. In both cases, the instability grows until the non-linear effects become significant. Over time, the non-linear effects cause a trapping phenomenon. To push ahead with the investigations, we plot in Figure \ref{TS_cons} the evolution over time  of $\ln(||E^{\eps}(t,\cdot)||_1)$ (A). The analytic value of the growth rate for the electric field, {\it i.e.} $\omega_i(k=0.2)=0.2548$, is very close to the numerical value observed and the curves obtained via the two numerical schemes (DAMM and spectral) coincide. As we made for the non-linear Landau damping, we examine in Figure \ref{TS_cons} the conservation of several quantities over time, for both (DAMM) and spectral schemes. As for the non-linear Landau damping, only the momentum (panel (E)) is well conserved by the (DAMM)-scheme, the total energy (panel (B)), the mass (panel (C)), the entropy (panel (D)) and the $L^2$-norm (panel (F)) indicate weak deviations from their initial value. Nevertheless, we will see in the next section the main advantages of the (DAMM)-scheme, when compared to standard schemes.

\begin{figure}[h]
\centering
	\begin{subfigure}{.49\textwidth}
  	\centering
  	\includegraphics[width=\linewidth,trim={0cm 0cm 0cm 0cm},clip]{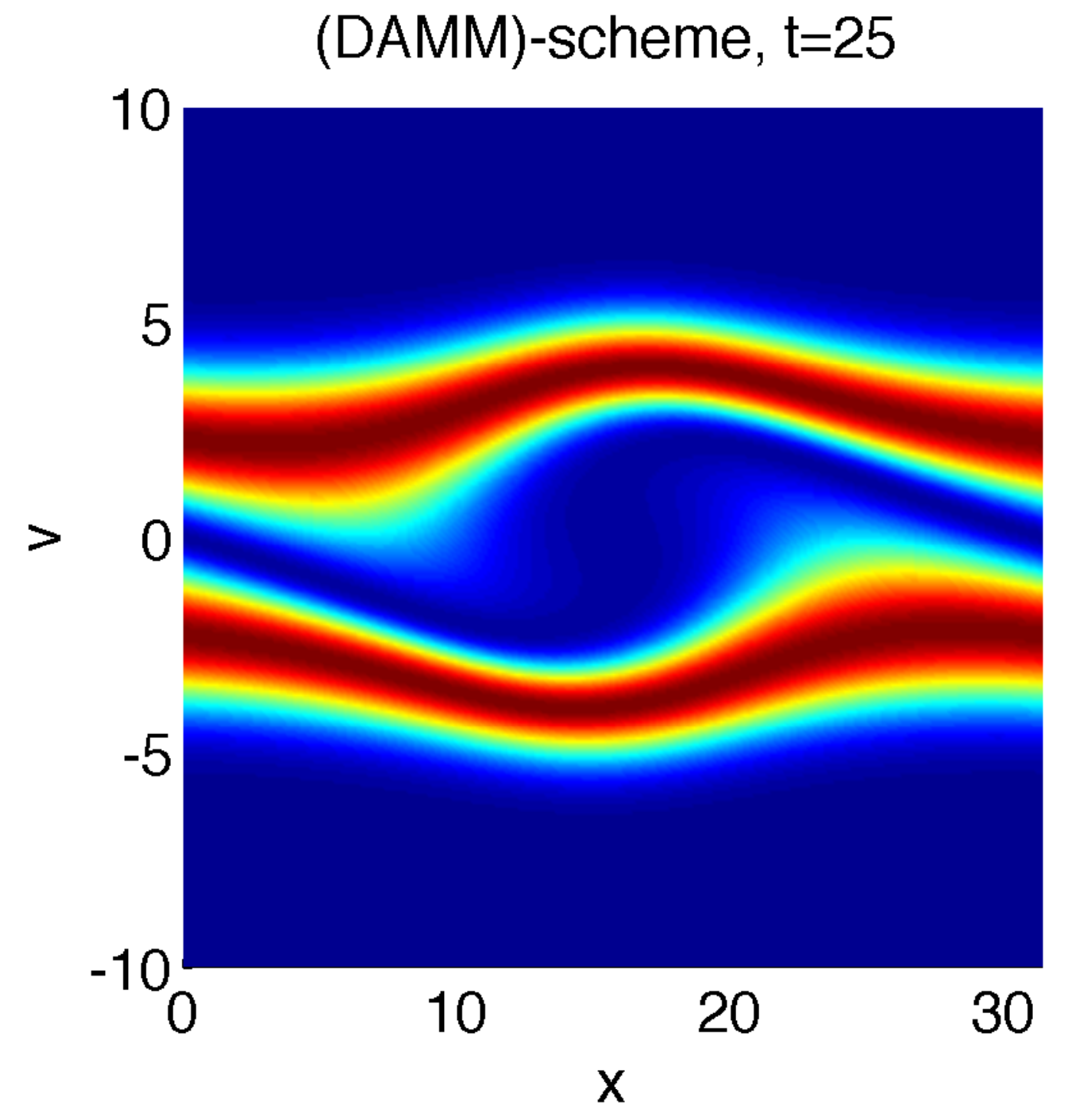}
	\caption{(DAMM)-scheme, $t=25$.}
	\end{subfigure}
    \begin{subfigure}{.49\textwidth}
  	\centering
  	\includegraphics[width=\linewidth,trim={0cm 0cm 0cm 0cm},clip]{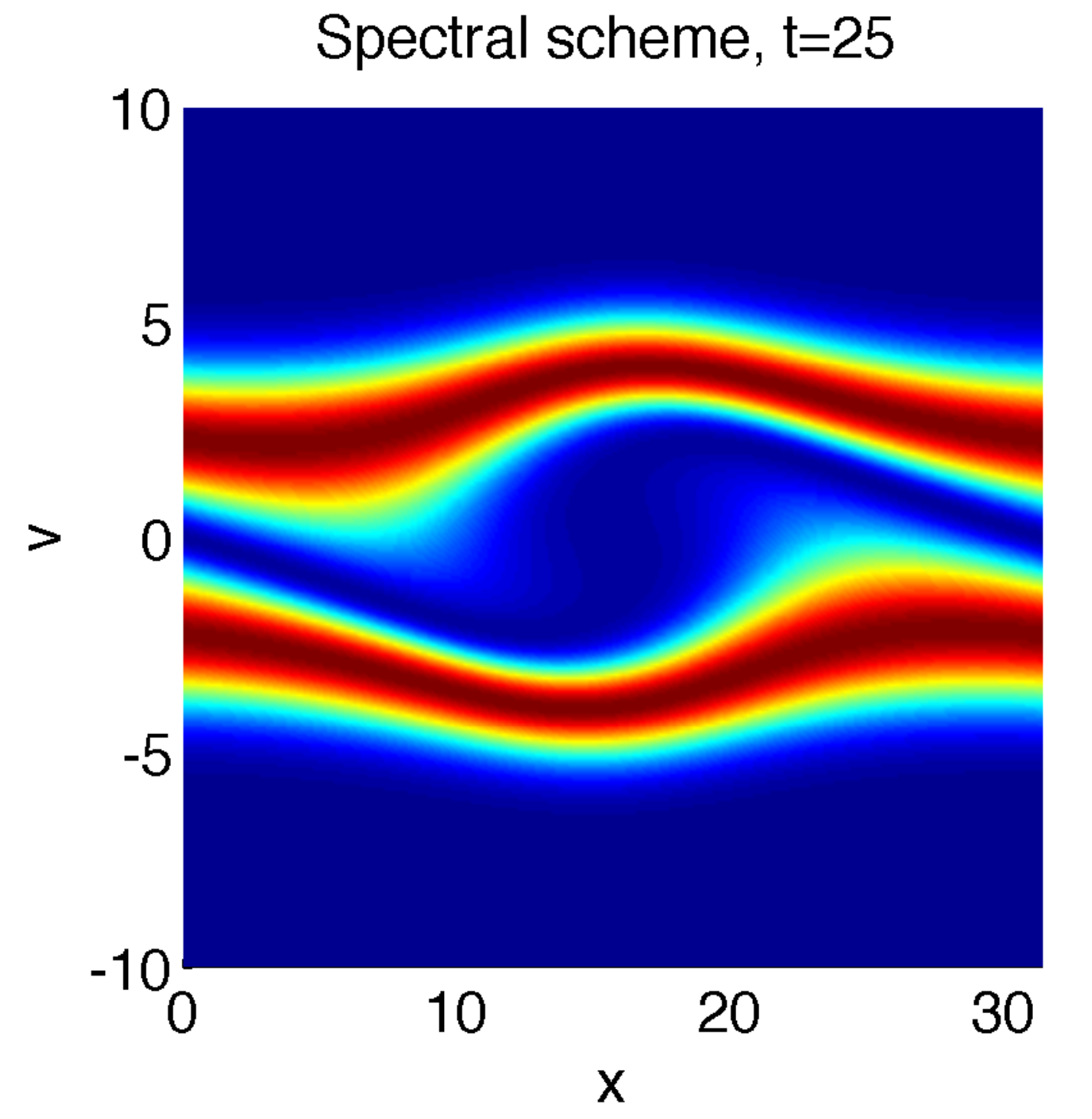}
	\caption{Spectral scheme, $t=25$.}
	\end{subfigure} \\ \vspace{0.5cm}
	\begin{subfigure}{.49\textwidth}
  	\centering
  	\includegraphics[width=\linewidth,trim={0cm 0cm 0cm 0cm},clip]{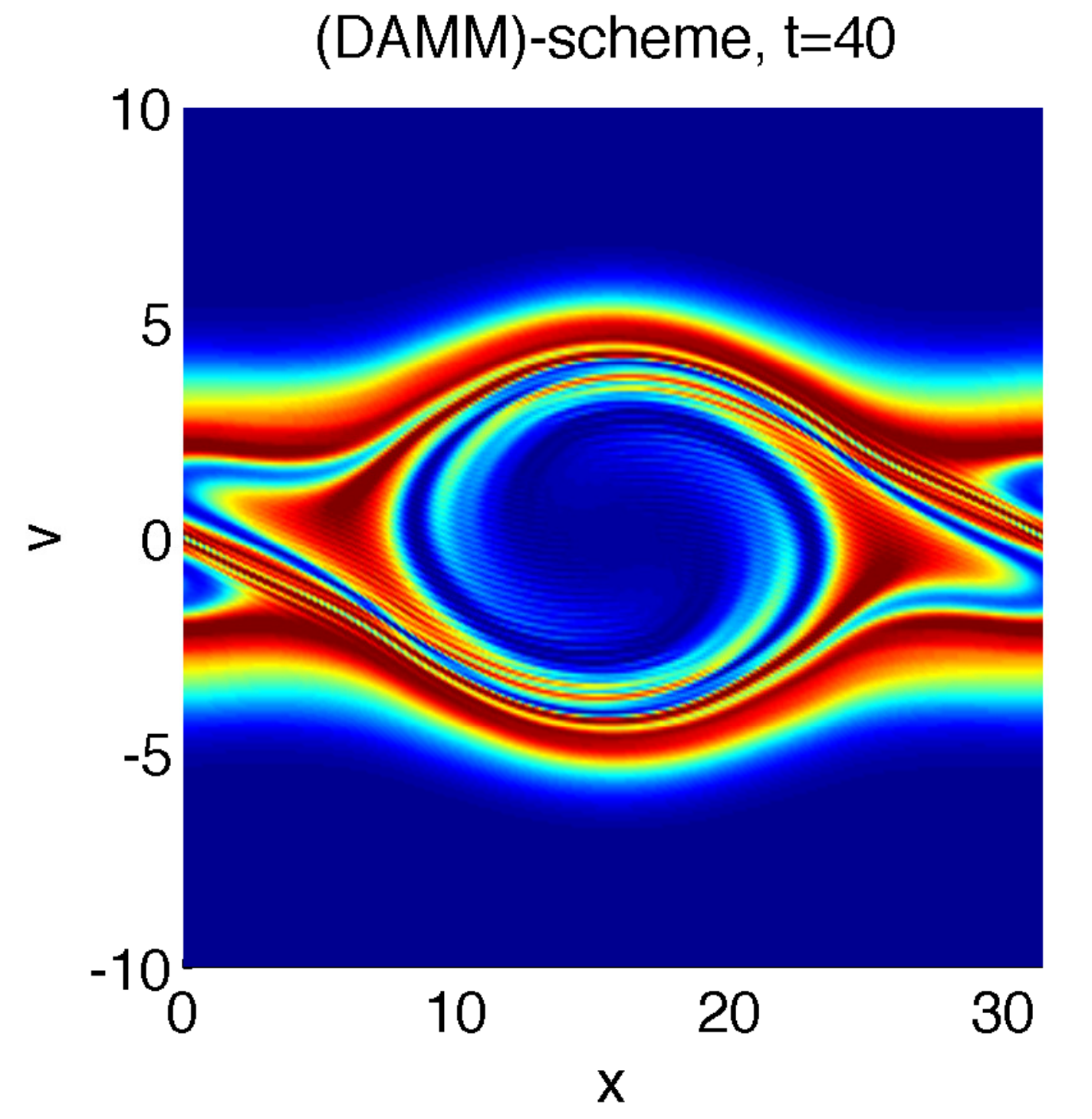}
	\caption{(DAMM)-scheme, $t=40$.}
	\end{subfigure}
     \begin{subfigure}{.49\textwidth}
   	\centering
   	\includegraphics[width=\linewidth,trim={0cm 0cm 0cm 0cm},clip]{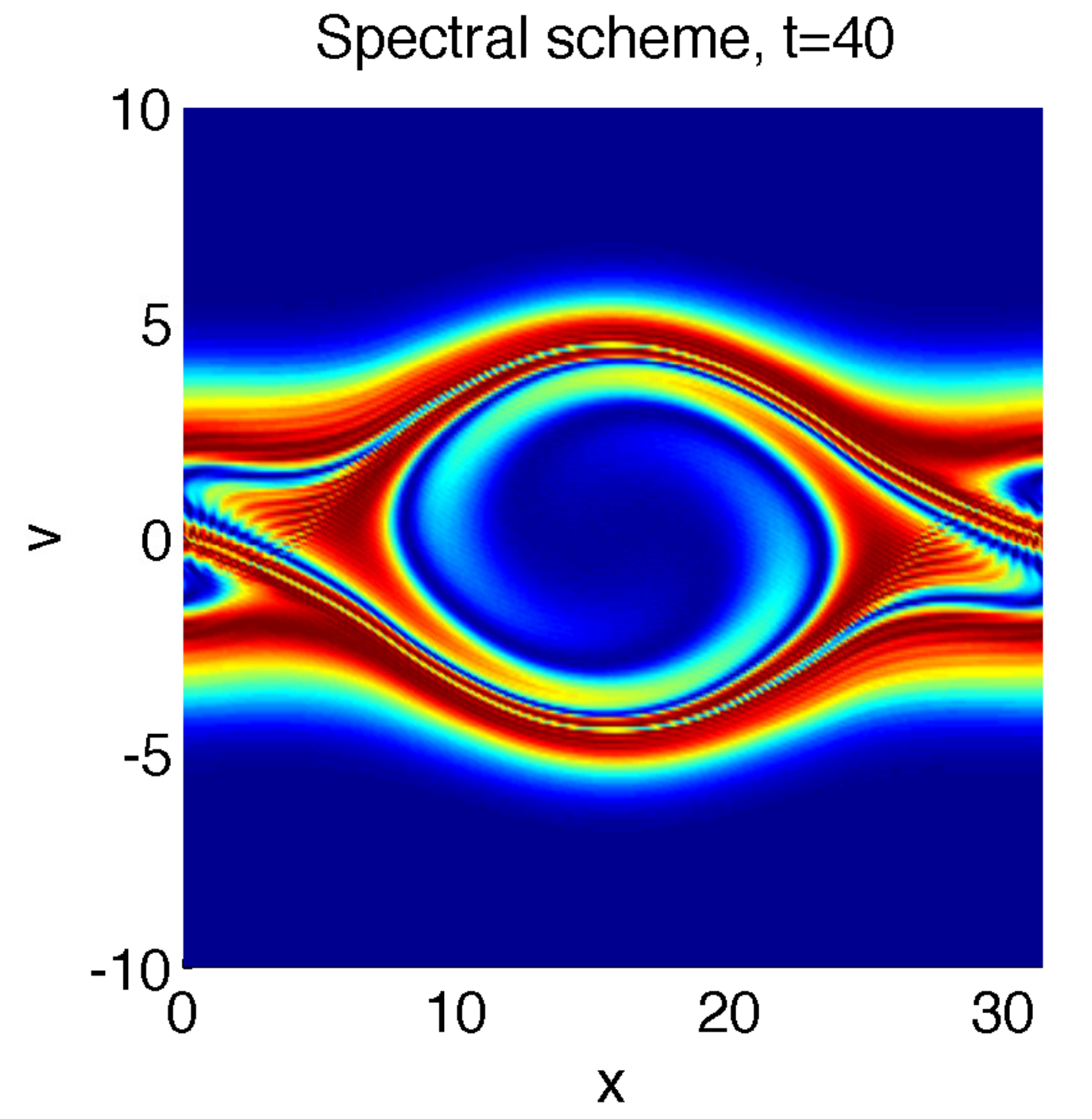}
 	\caption{Spectral scheme, $t=40$.}
 	\end{subfigure} \vspace{0.5cm}
\caption{(Two-stream instability for $\eps=1$ and $f^1_{in}$) Distribution function $f^{\eps}(t,x,v)$ at different times  with $k=0.2$ and $\gamma=0.001$ for the (DAMM)-scheme ((A) and (C)) and the spectral scheme ((B) and (D)). $T=50$, $N_x = 256$, $N_y=256$, $\Delta t = 0.1$, $\eps =1$ and $\sigma = (\Delta x/L_x)^2$.}
\label{two_streams_instability_damm}
\end{figure}

\begin{figure}[ht]
\centering
	\begin{subfigure}{.48\textwidth}
  	\centering
  	\includegraphics[width=\linewidth,trim={0cm 0cm 0cm 0cm},clip]{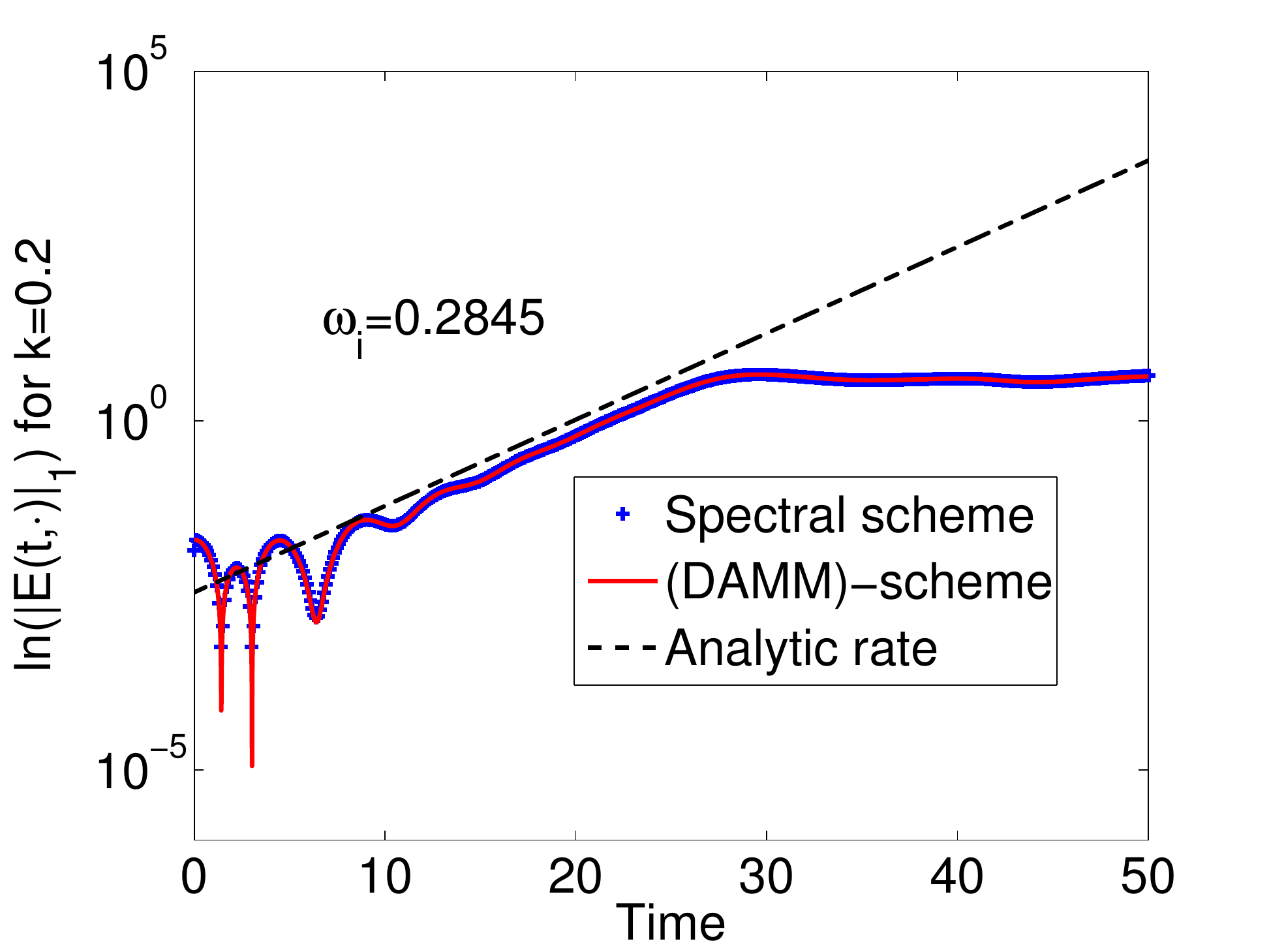}
	\caption{Electric Field (in log value).}
	\end{subfigure}
    \begin{subfigure}{.48\textwidth}
  	\centering
  	\includegraphics[width=\linewidth,trim={0cm 0cm 0cm 0cm},clip]{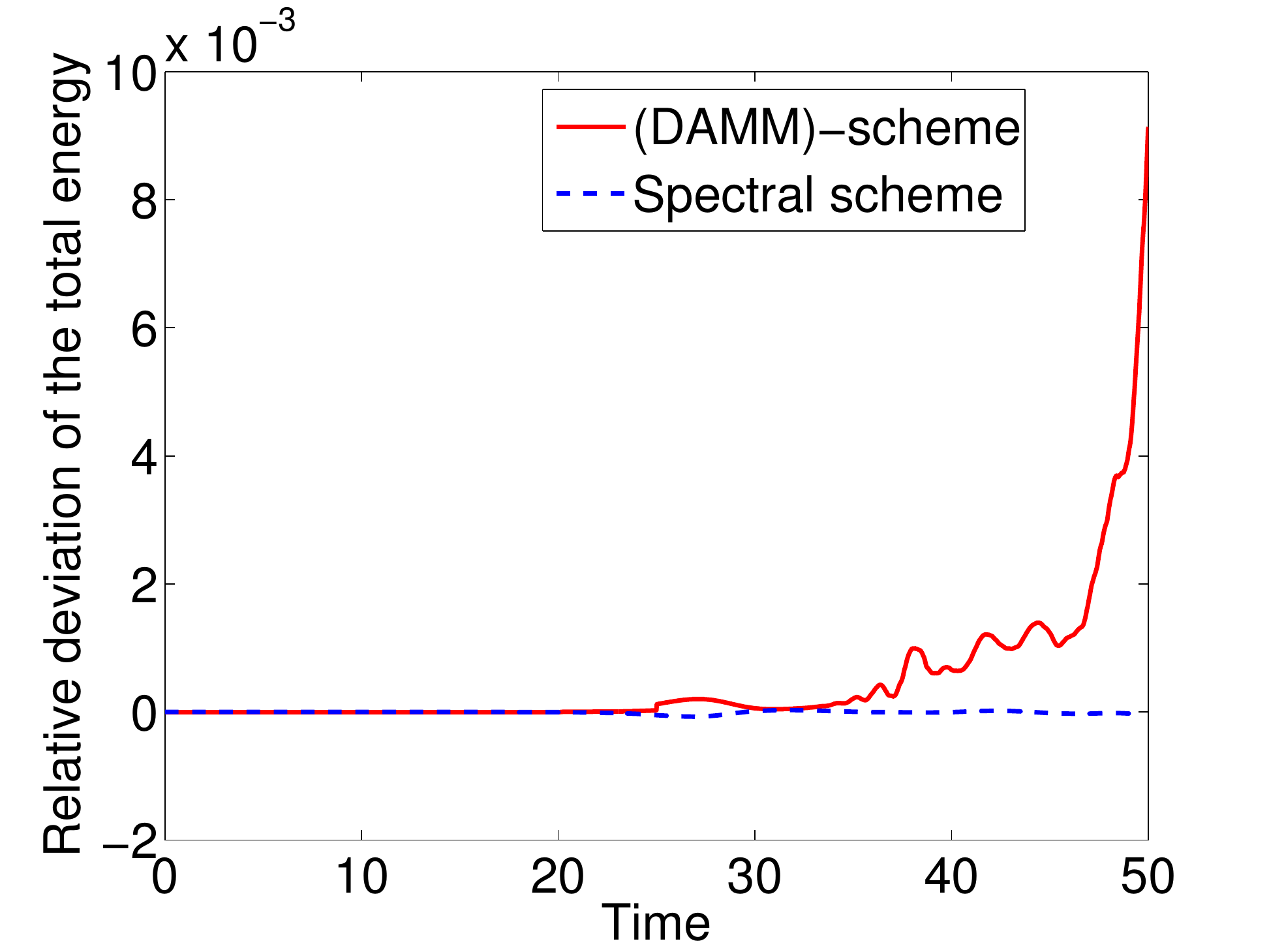}
	\caption{Total energy.}
	\end{subfigure} \\ 
	\begin{subfigure}{.48\textwidth}
  	\centering
  	\includegraphics[width=\linewidth,trim={0cm 0cm 0cm 0cm},clip]{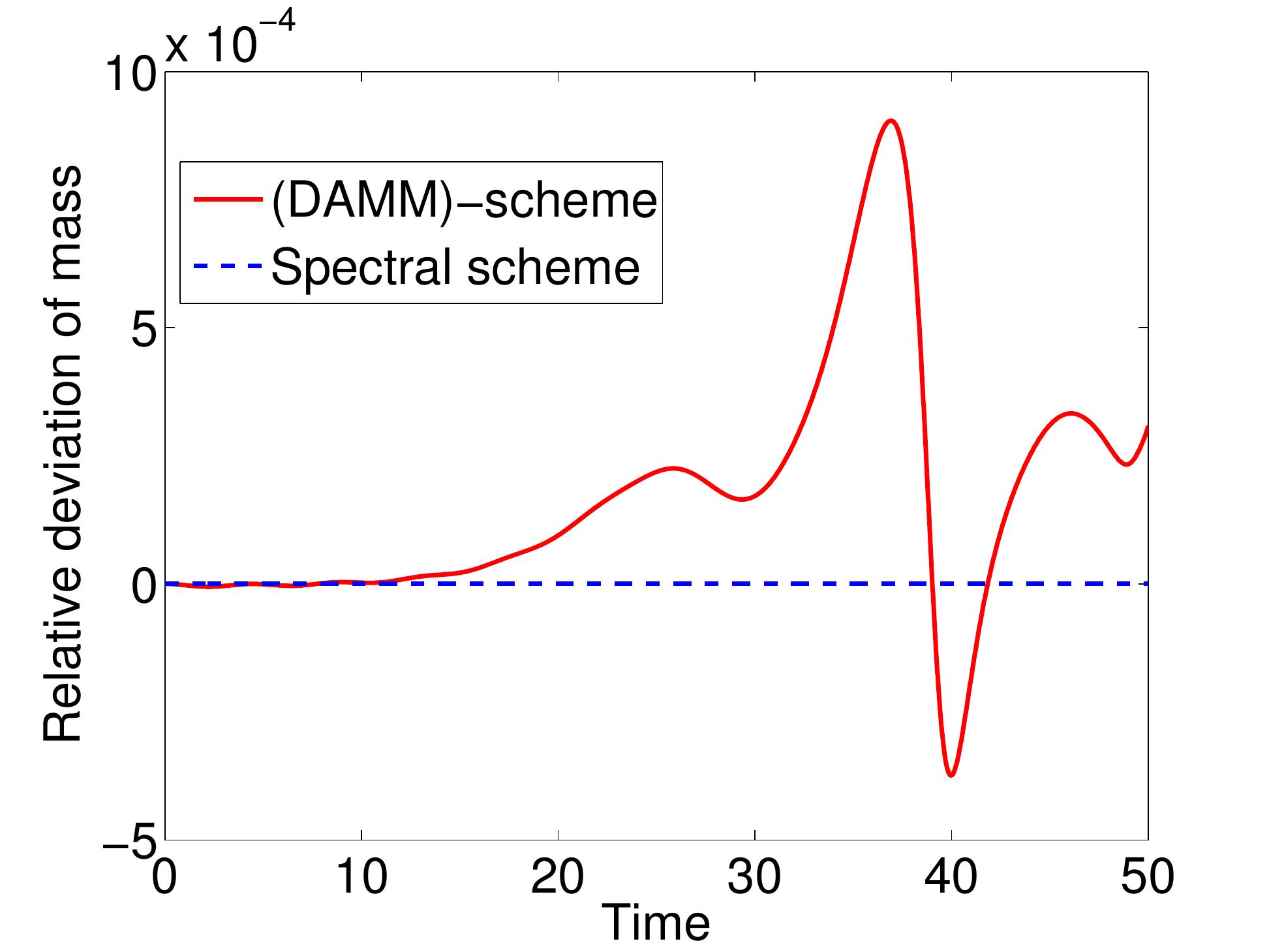}
	\caption{Mass.}
	\end{subfigure}
    \begin{subfigure}{.48\textwidth}
  	\centering
  	\includegraphics[width=\linewidth,trim={0cm 0cm 0cm 0cm},clip]{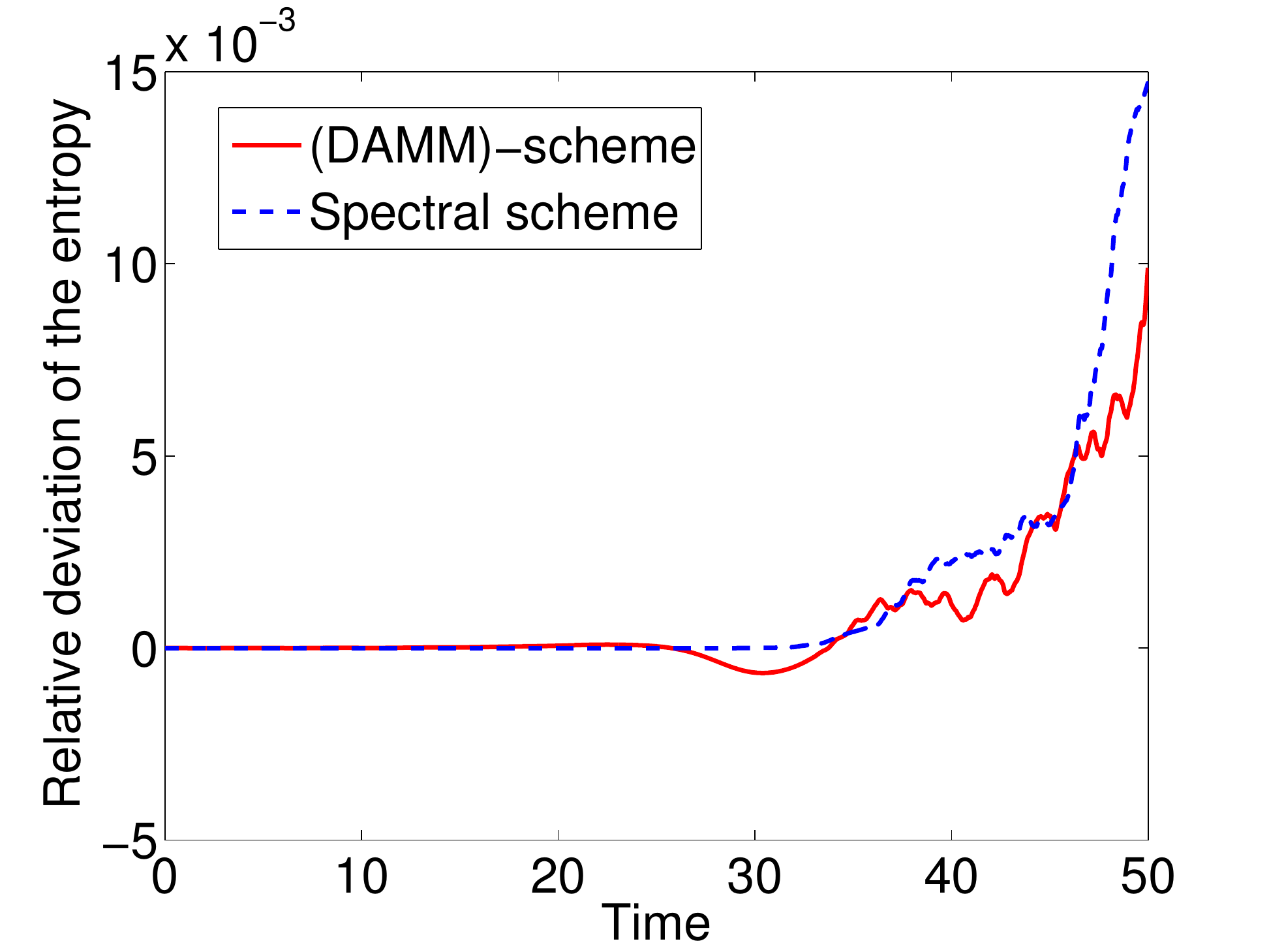}
	\caption{Entropy.}
	\end{subfigure} \\ 
	\begin{subfigure}{.48\textwidth}
  	\centering
  	\includegraphics[width=\linewidth,trim={0cm 0cm 0cm 0cm},clip]{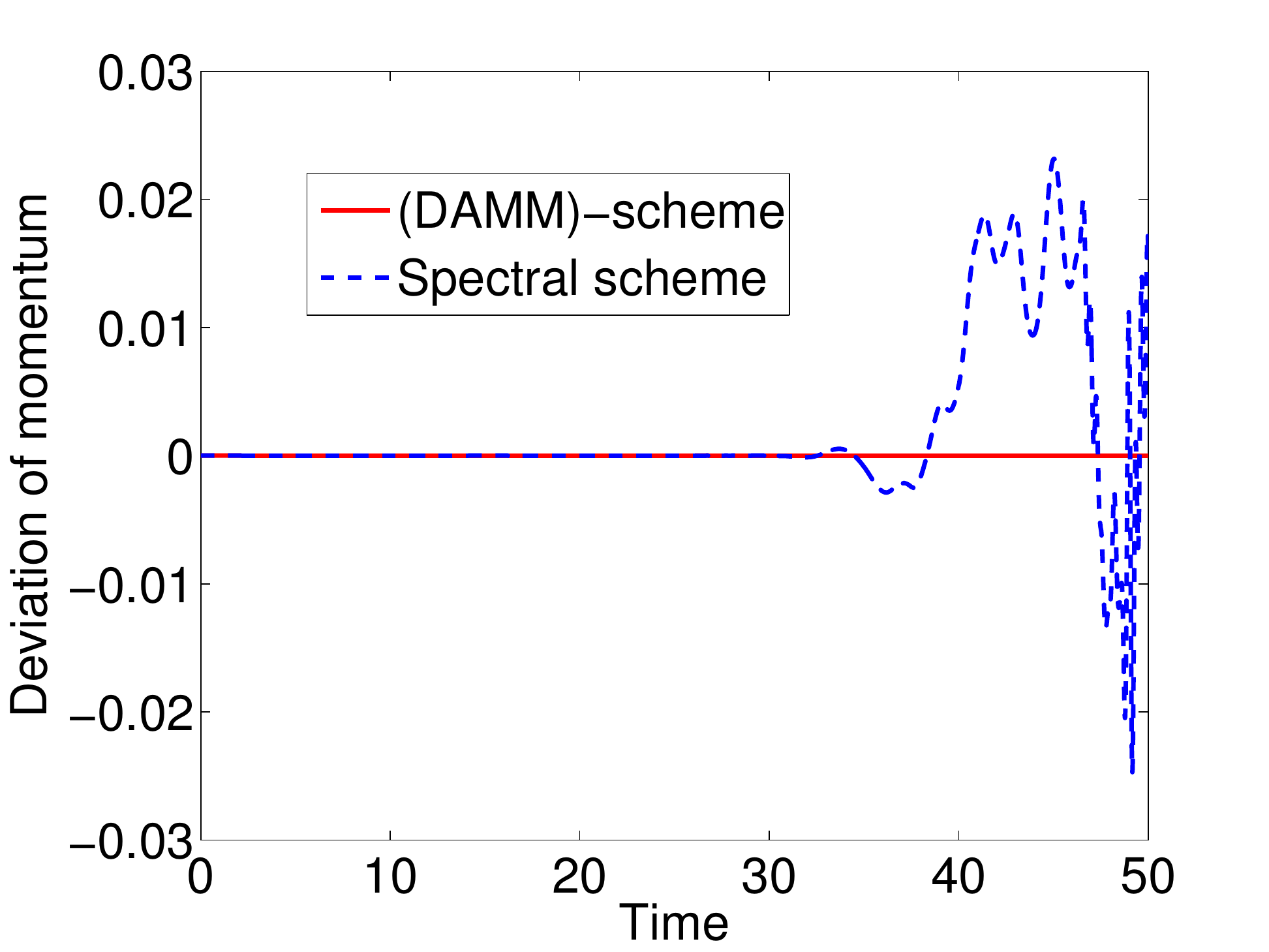}
	\caption{Momentum.}
	\end{subfigure}
    \begin{subfigure}{.48\textwidth}
  	\centering
  	\includegraphics[width=\linewidth,trim={0cm 0cm 0cm 0cm},clip]{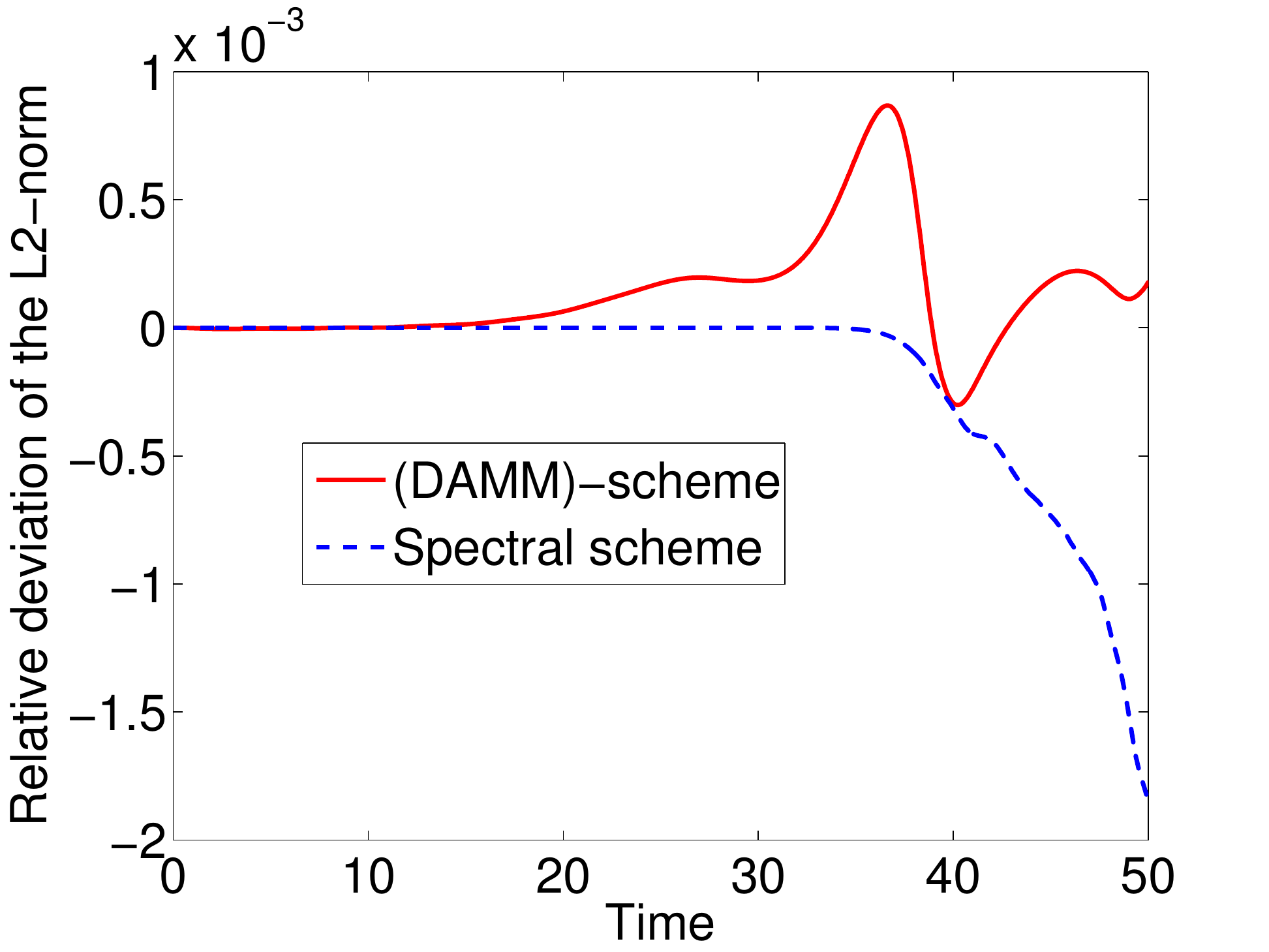}
	\caption{$L^2$-norm.}
	\end{subfigure} \\ 
\caption{(Two-stream instability for $\eps=1$ and $f^1_{in}$) Electric field versus time (A) and deviation over time for both (DAMM) and spectral schemes of several quantities (B), (C), (D), (E) and (F). Mesh size: $N_x=N_y=256$. $T=50$, $\Delta t=0.01$, $\eps=1$, and $\sigma = (\Delta x /L_x)^2$.}
\label{TS_cons}
\end{figure}

\subsubsection{ Two-stream instability in the limit regime $\eps \to 0$}

In order to conclude this section, we show that the AP-property of the (DAMM)-scheme can be useful when an equilibrium is reached. Recall that in the case of the Vlasov-Poisson system \eqref{Vlasov_Poisson_bracket}, passing to $\eps \to 0$ is equivalent to passing to $t \to \infty$. Thus, the (DAMM)-scheme seems suitable to study the long-time behavior of the non-linear two-stream instability. In this part, we modify the initial condition, taking

\begin{equation} \label{ini_ts_2}
f_{in}^2(x,v) = \frac{1}{\sqrt{2 \, \pi}} \, v^2 e^{-v^2/2}(1+\gamma \, \cos(k\,x))\,.
\end{equation}

\bigskip 

Although no rigorous proofs exist, the two-stream instability leads (in a certain weak sense) to a BGK (Bernstein-Greene-Kruskal) equilibrium after the growth phase. In Figure \ref{bgk_distri}, the qualitative behavior of such equilibrium is visible. We have plotted the initial condition \eqref{ini_ts_2} in the panel (A), then we resolve the Vlasov-Poisson system with the (DAMM)-scheme for $\eps=0$, $\sigma=(\Delta x/L_x)^2$, $\Delta t = 0.01$, $N_x=N_y=256$, $L_x=2\pi / k$, $L_v = 5$, and the initial condition \eqref{ini_ts_2}, with $k=0.5$ and $\gamma = 0.05$. From the first time iteration $n=3$ (panel (B)), the equilibrium seems to be attained and the filamentations are smoothed out. Note the formation of the separatrix which connects the saddle points at $v=0$ and $x=0=4 \,\pi$. Due to the topological conservation of the Vlasov-Poisson equation (see \cite{gamba}), the distribution function keeps over time the nature and the number of its extrema. Panels (C) and (D) represent the distribution function $f^{\eps}$ at the time iteration $n=15$ and $n=50$, respectively. Note that the separatrix, clearly visible in the panel (B), is progressively smoothed out due to the numerical dissipation of the scheme. Besides, the value of the central extremum in $(2 \pi, 0)$ remains essentially constant in time.Thus, the (DAMM)-scheme conserves the nature and the position of this latter, meaning that the particle trapping is well-reproduced by our scheme. In Figure \ref{bgk_distri2}, we have plotted the contours of the distribution function $f^{\eps}$ at the same times. We see clearly in the center the particle trapping on the panels (B), (C) and (D). 

\bigskip

In order to confirm this BGK  saturation, we shall check if the contours of the distribution function $f^0$ are aligned with the contours of the stream-function $\Psi^0 = v^2/2 - \varphi^0$, as one expects that in the limit $\eps \rightarrow 0$ $f^0$ depends only on $\Psi^0$. Thanks to the AP-property of our (DAMM)-scheme, we can obtain this equilibrium with a very low numerical cost, without too much numerical pollution. Few iterations are effectively needed to reach this equilibrium. To put into evidence the dependence $f^0(\Psi^0)$, we use the fitting proposed by Heath and al \cite{gamba}, namely
$$
f^0_{fit} = a \,(\Psi^0+\varphi_M) \, (\Psi^0 + \Psi^{\star})\, e^{-\beta \, \Psi^0}\,,
$$
where $a$ and $\beta$ are fitting parameters to be found numerically, $\varphi_{M}$ is the maximum of $\varphi^0$ and $\Psi^{\star}$ is defined by
$$
\Psi^{\star} = \frac{\varphi_M- \beta \, \Psi_M \varphi_M + 2\Psi_M-\beta \Psi_M^2}{\beta \, \varphi_M + \beta \, \Psi_M -1}\,,
$$
where $\Psi_M$ is the value at which $f^0$ attains its maximum. We choose $a=0.2948$ and $\beta=1.20$. From the numerical simulation, we extract $\varphi_M = 0.60$, $\Psi_M = 0.93$ and thus $\Psi^{\star}=0.90$. In Figure \ref{BGK_figures_curve}, we plot the evolution of $f^0(\Psi^0)$ as compared to the fitting distribution $f_{fit}^0$. Panel (A) represents $f^0(\Psi^0)$ at time $t=0$, clearly, there is no alignment between $f^0$ and $\Psi^0$, as expected. The panels (C) and (E) which zoom the panel (A) in two regimes confirm this affirmation, we see clearly the non-functional structure of the plot (multi-valued function). However, in panel (B), we track the same evolution but after fifty time iterations. One notes a very good correspondence between the numerical curve and the fitting one. The panels (D) and (F) show a good alignment of the points, showing that a BGK equilibrium is attained. Nevertheless, we observe an anormal inflexion of the curve near to the point $f^0(\Psi^0=0)$. A similar phenomenon was observed in \cite{CG}. In the panel (F), we examine $f^0(\Psi^0)$ near to its minimum. We pay attention here that there is no splitting phenomenon, confirming that the saturation is totally achieved.
 
\bigskip


To summarize, the (DAMM)-scheme permits, by passing to the limit $\eps \to 0$, to obtain a BGK equilibrium with a low number of iterations, permitting to control the accumulation of the errors. This is an essential advantage, as compared to standard schemes.

\begin{figure}[h]
\centering
	\begin{subfigure}{.49\textwidth}
  	\centering
  	\includegraphics[width=\linewidth,trim={0cm 0cm 0cm 0cm},clip]{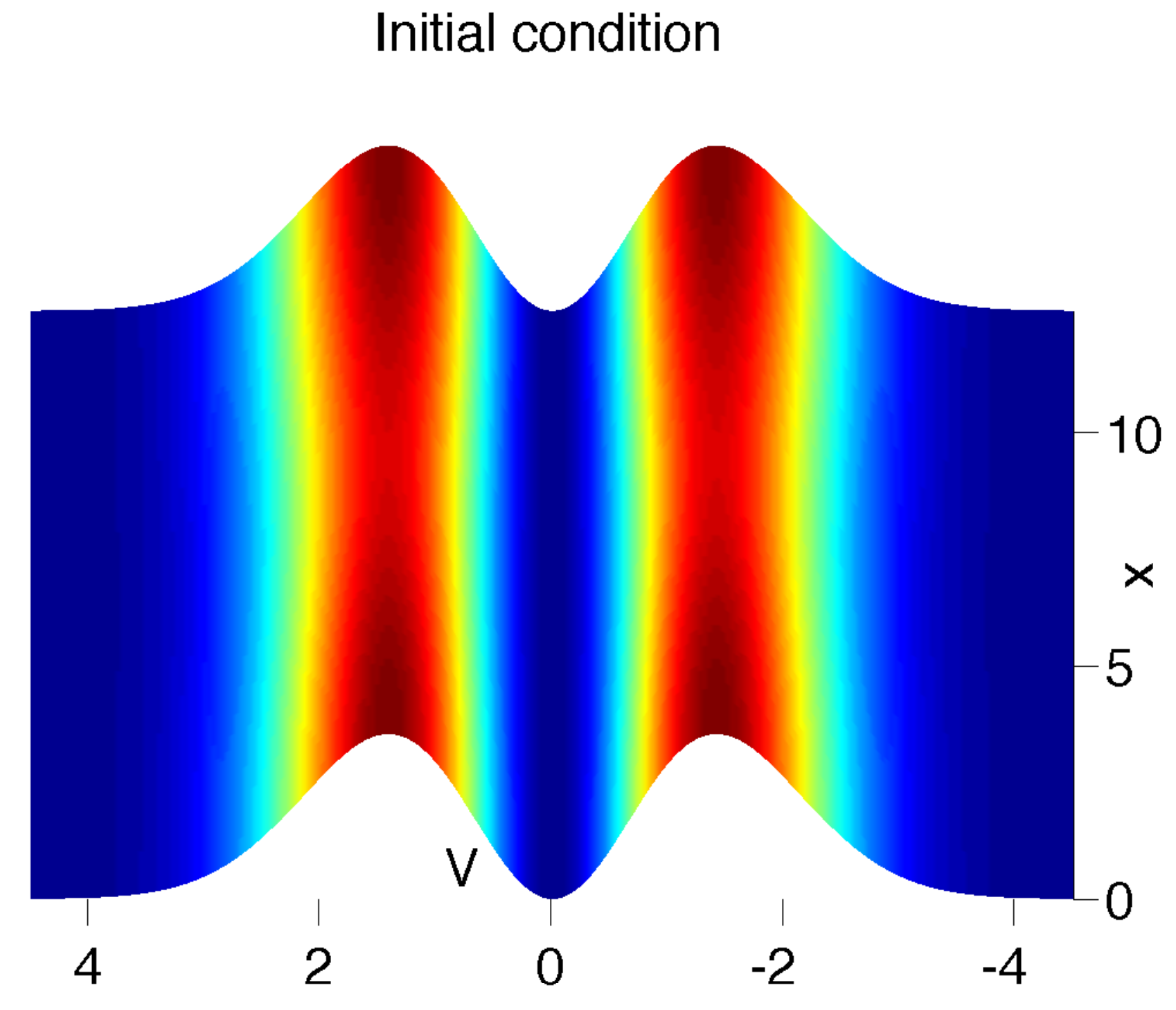}
	\caption{Initial condition}
	\end{subfigure}
    \begin{subfigure}{.49\textwidth}
  	\centering
  	\includegraphics[width=\linewidth,trim={0cm 0cm 0cm 0cm},clip]{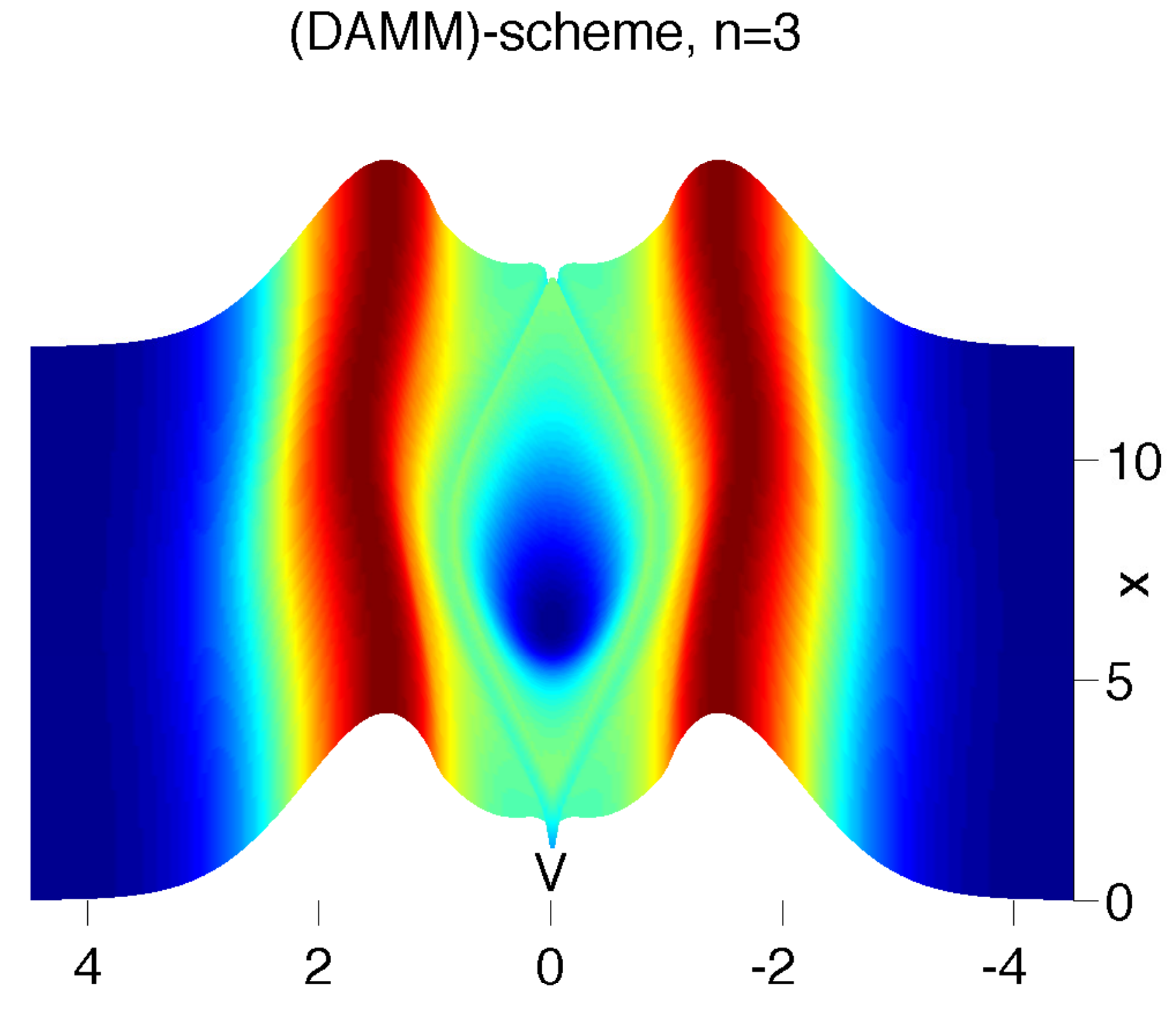}
	\caption{n=3}
	\end{subfigure} \\ \vspace{0.5cm}
	\begin{subfigure}{.49\textwidth}
  	\centering
  	\includegraphics[width=\linewidth,trim={0cm 0cm 0cm 0cm},clip]{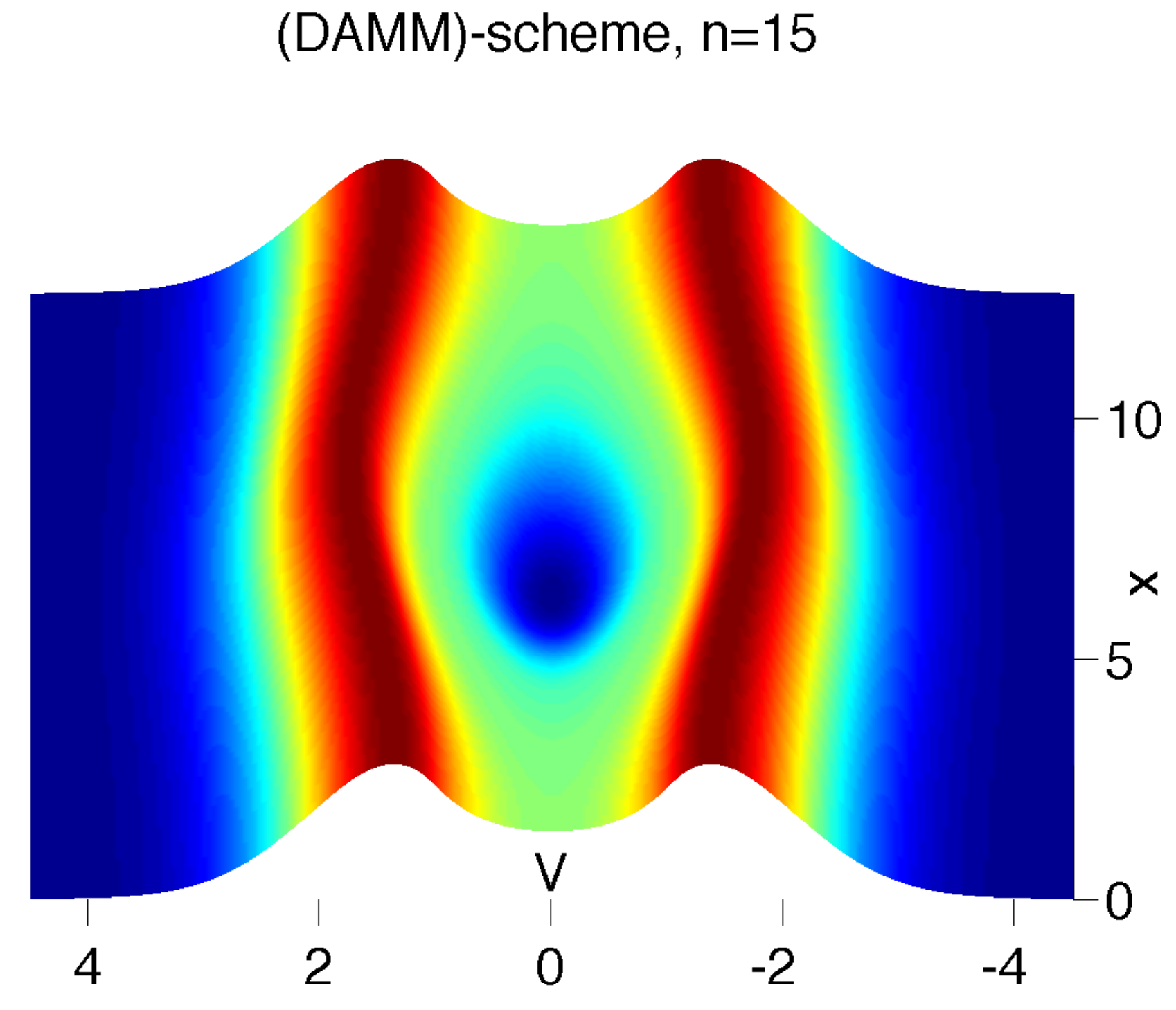}
	\caption{n=15}
	\end{subfigure}
     \begin{subfigure}{.49\textwidth}
   	\centering
   	\includegraphics[width=\linewidth,trim={0cm 0cm 0cm 0cm},clip]{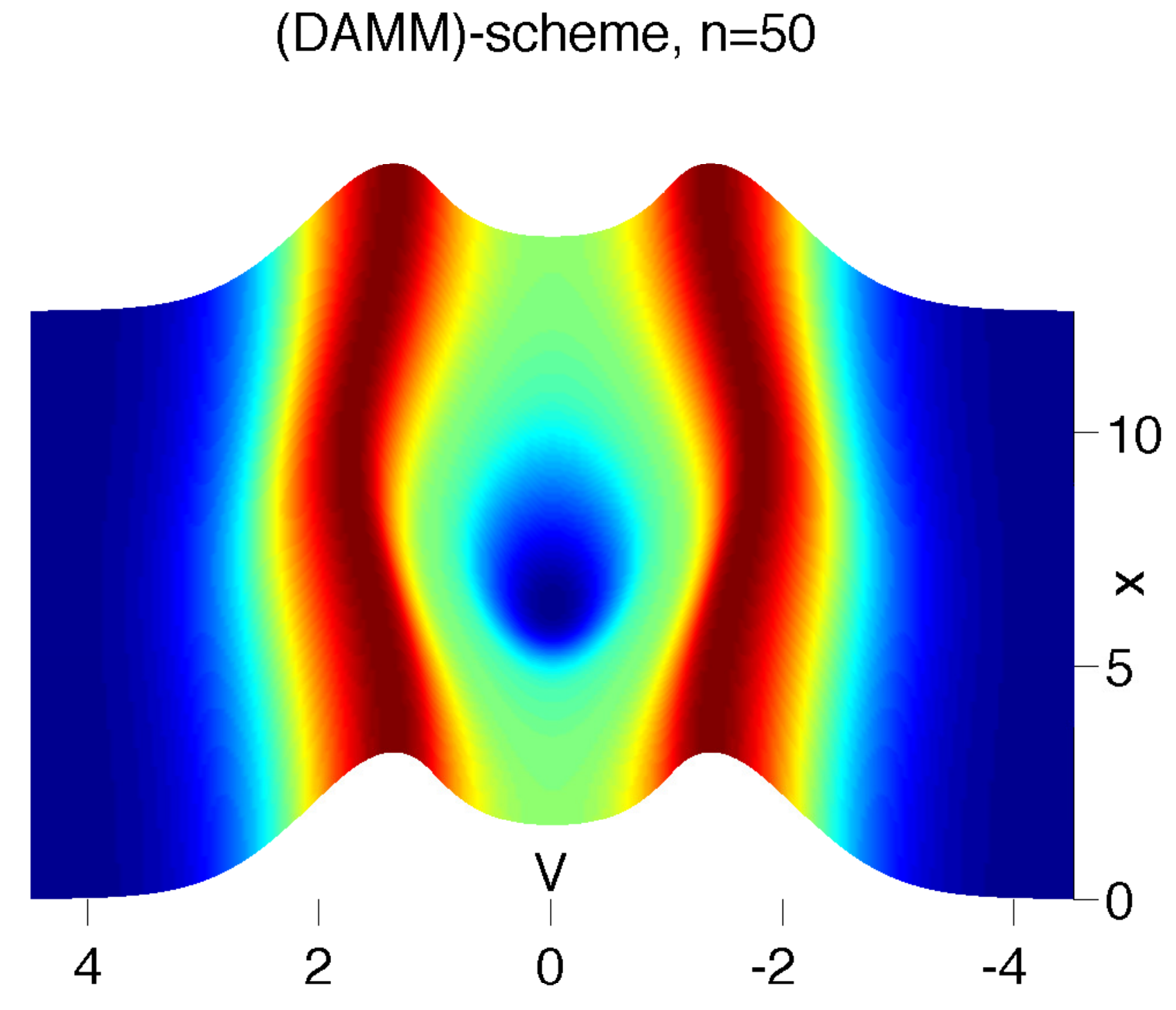}
 	\caption{n=50}
 	\end{subfigure} \vspace{0.5cm}
\caption{(Two-stream instability for $\eps=0$ and $f^2_{in}$) Distribution function $f^0(t,x,v)$ at different times for the two stream instability with $k=0.5$ and $\gamma=0.05$ via the (DAMM)-scheme. Parameters were $N_x = N_y=256$, $\Delta t = 0.1$, and $\sigma = (\Delta x/L_x)^2$.}
\label{bgk_distri}
\end{figure}

\begin{figure}[h]
\centering
	\begin{subfigure}{.49\textwidth}
  	\centering
  	\includegraphics[width=\linewidth,trim={0cm 0cm 0cm 0cm},clip]{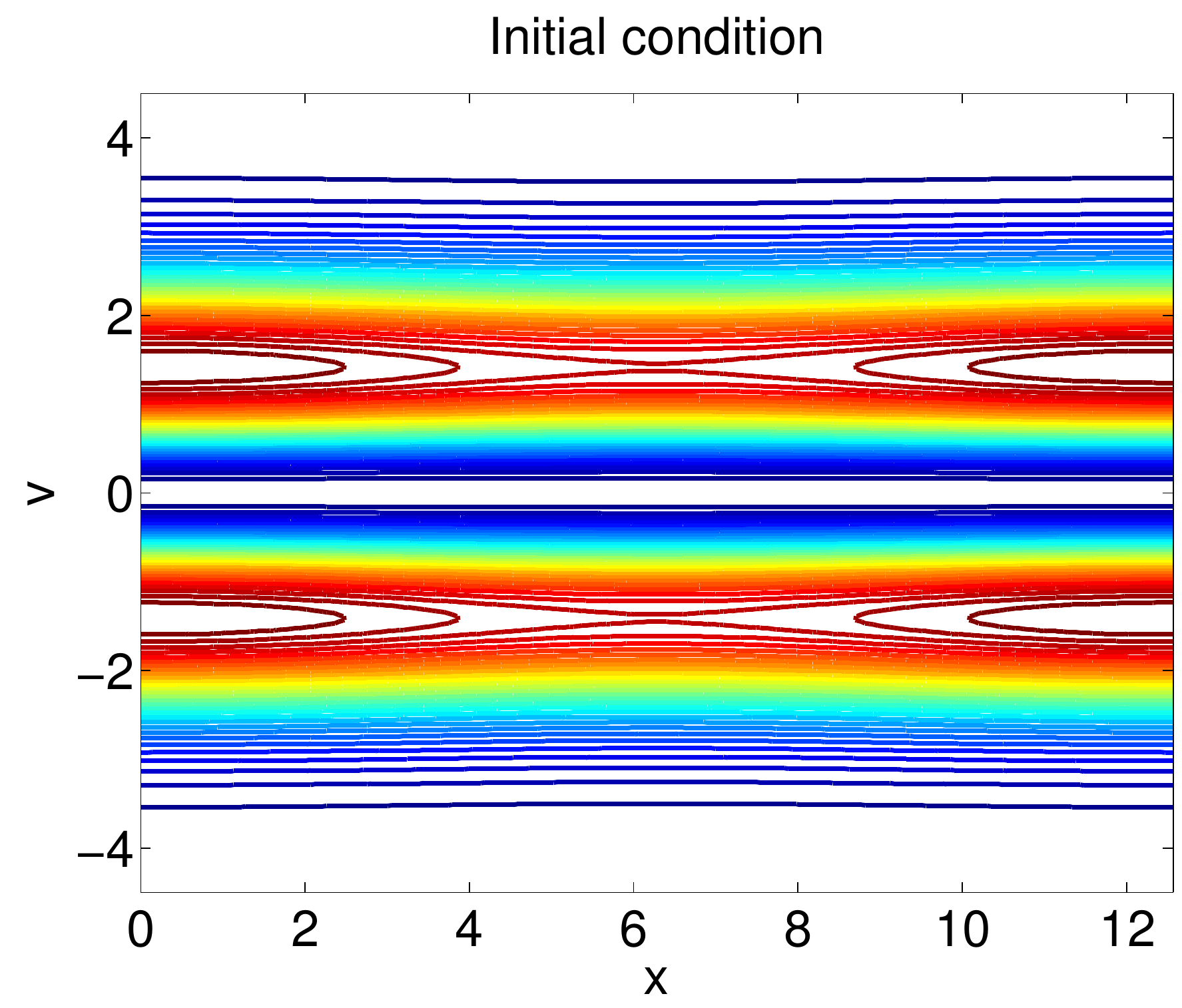}
	\caption{Initial condition}
	\end{subfigure}
    \begin{subfigure}{.49\textwidth}
  	\centering
  	\includegraphics[width=\linewidth,trim={0cm 0cm 0cm 0cm},clip]{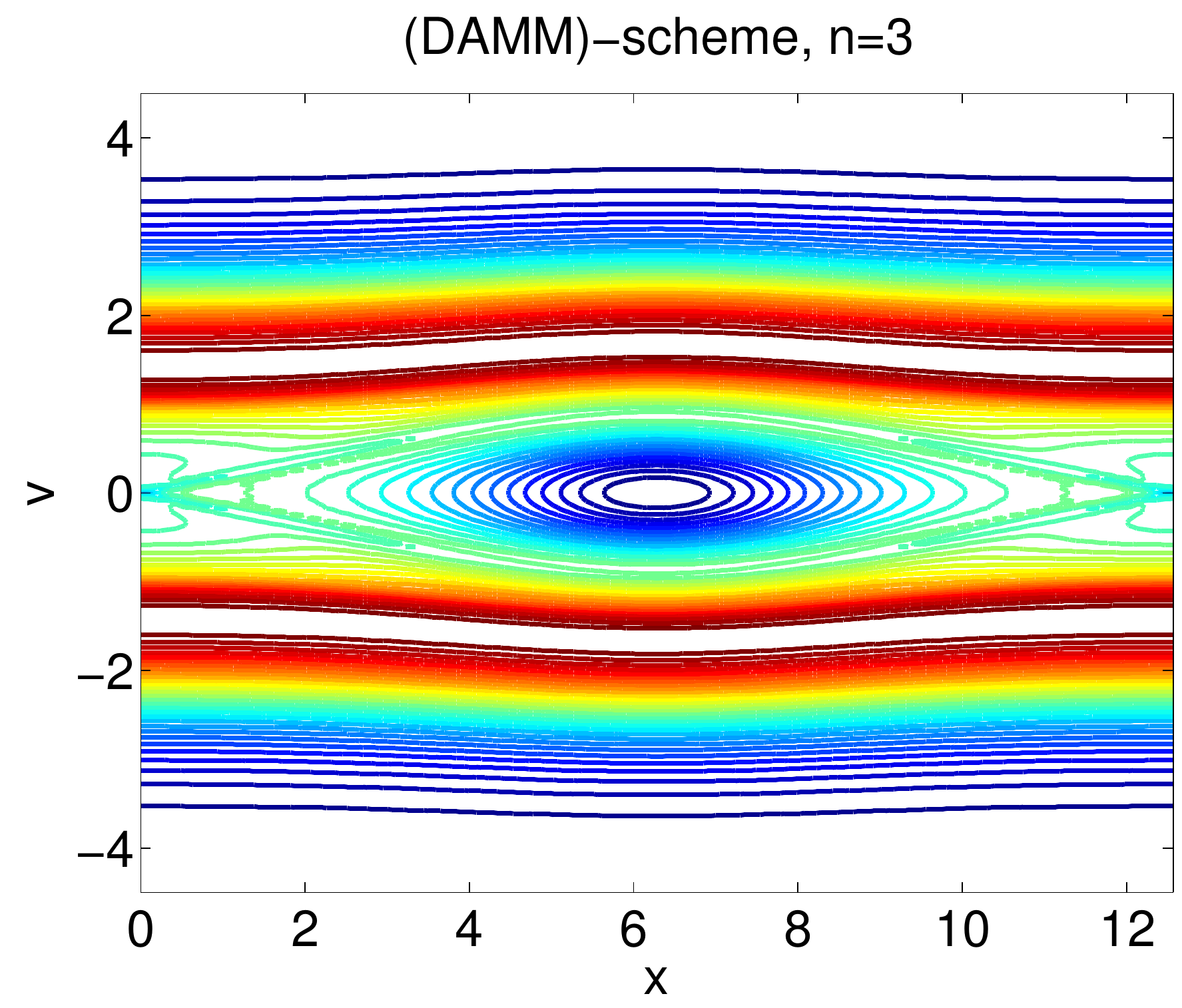}
	\caption{n=3}
	\end{subfigure} \\ \vspace{0.5cm}
	\begin{subfigure}{.49\textwidth}
  	\centering
  	\includegraphics[width=\linewidth,trim={0cm 0cm 0cm 0cm},clip]{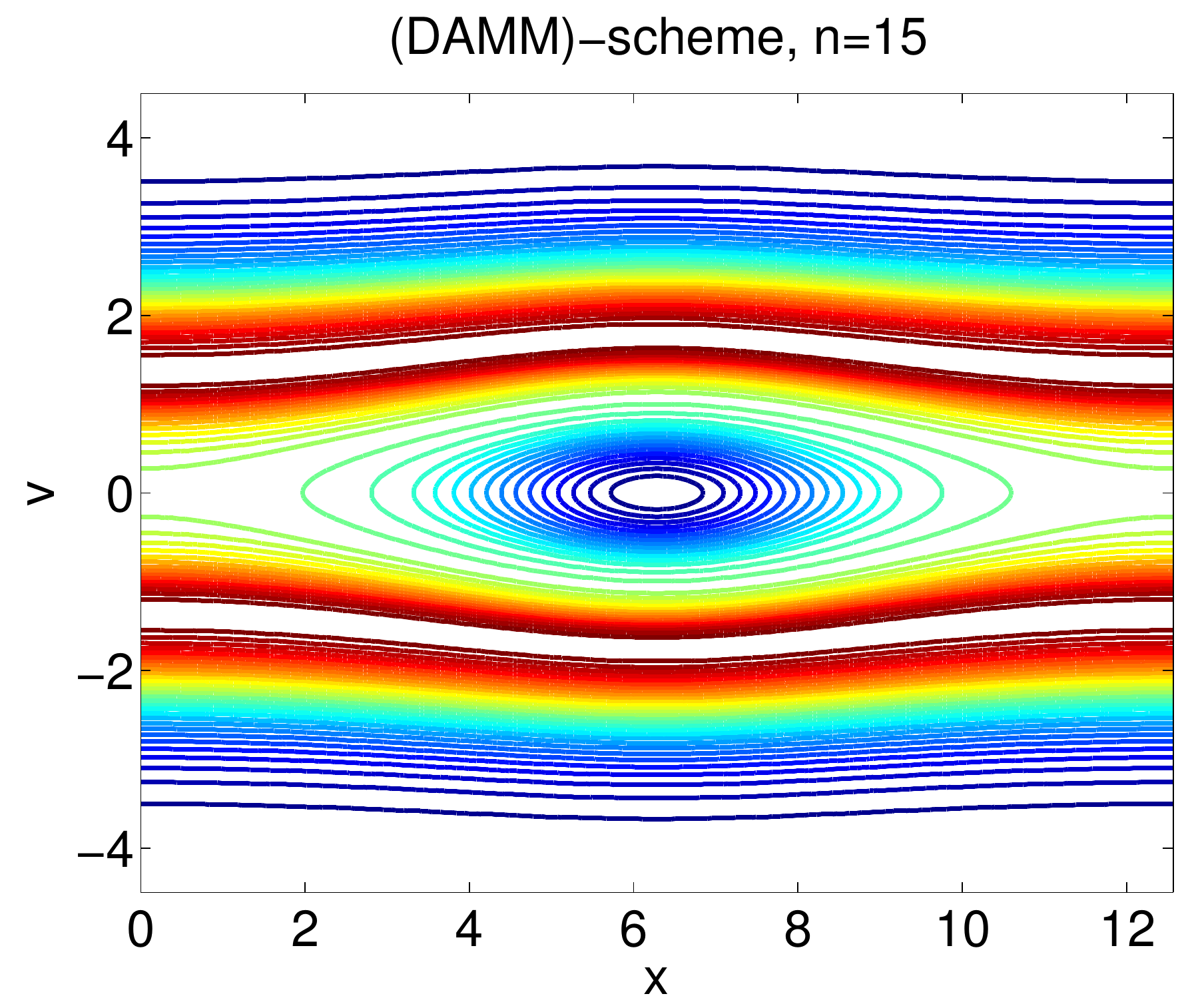}
	\caption{n=15}
	\end{subfigure}
     \begin{subfigure}{.49\textwidth}
   	\centering
   	\includegraphics[width=\linewidth,trim={0cm 0cm 0cm 0cm},clip]{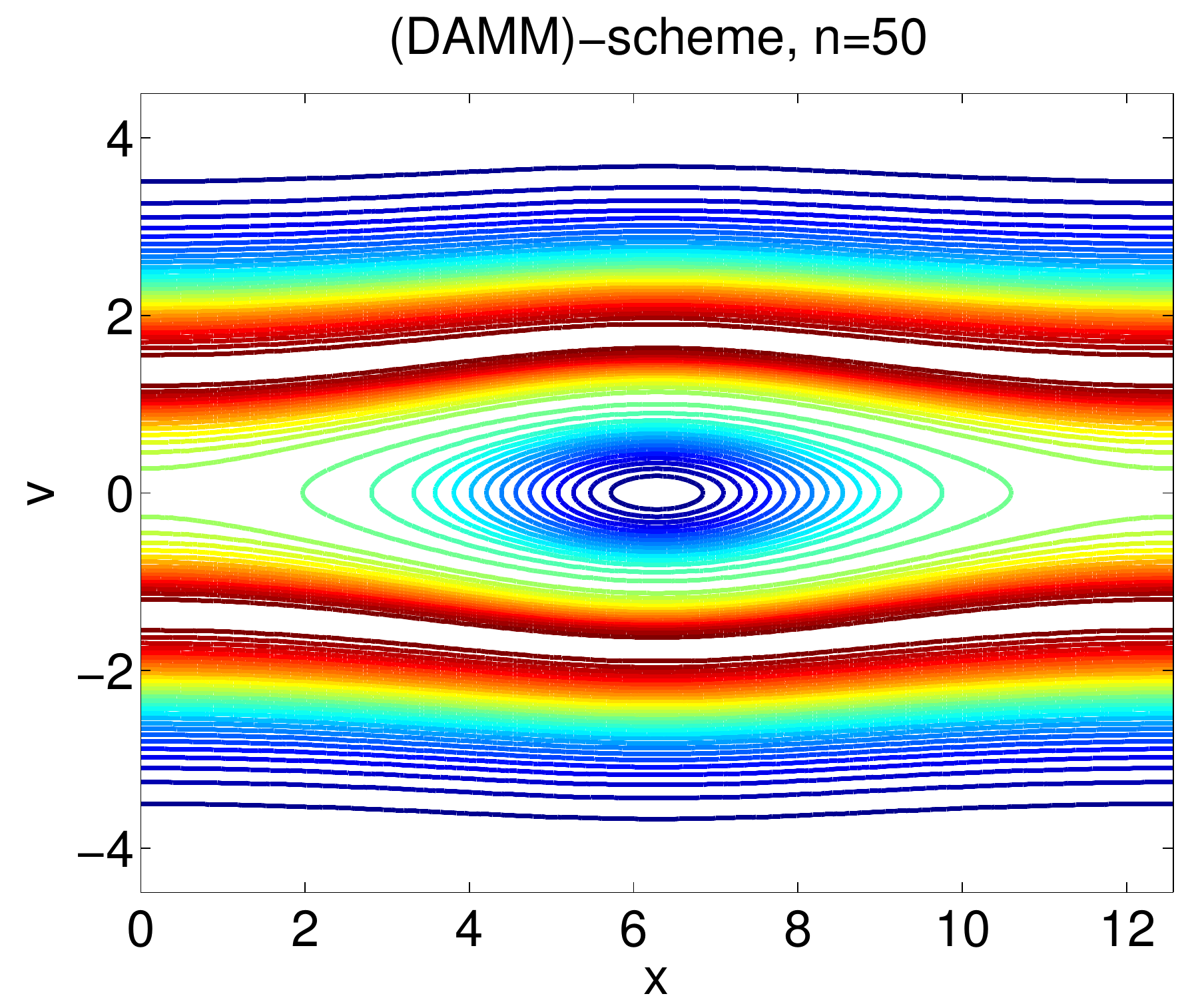}
 	\caption{n=50}
 	\end{subfigure} \vspace{0.5cm}
\caption{(Two-stream instability for $\eps=0$ and $f^2_{in}$) Contour plots of the distribution function $f^0(t,x,v)$ at different times for the two stream instability with $k=0.5$ and $\gamma=0.05$ via the (DAMM)-scheme. Parameters were $N_x = N_y=256$, $\Delta t = 0.1$, and $\sigma = (\Delta x/L_x)^2$.}
\label{bgk_distri2}
\end{figure}

\begin{figure}[ht]
\centering
	\begin{subfigure}{.49\textwidth}
  	\centering
  	\includegraphics[width=\linewidth,trim={0cm 0cm 0cm 0cm},clip]{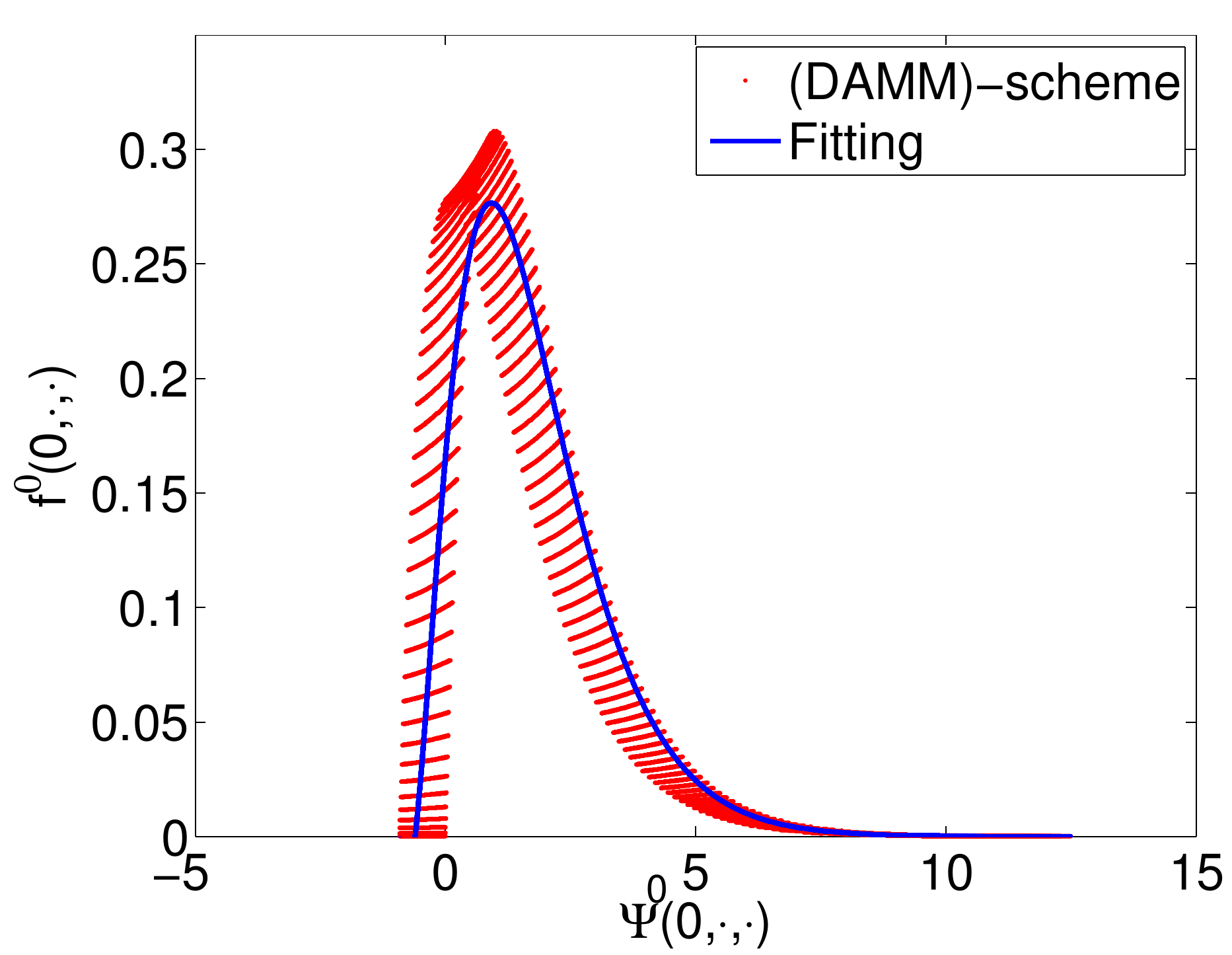}
	\caption{$n=0$.}
	\end{subfigure}
    \begin{subfigure}{.49\textwidth}
  	\centering
  	\includegraphics[width=\linewidth,trim={0cm 0cm 0cm 0cm},clip]{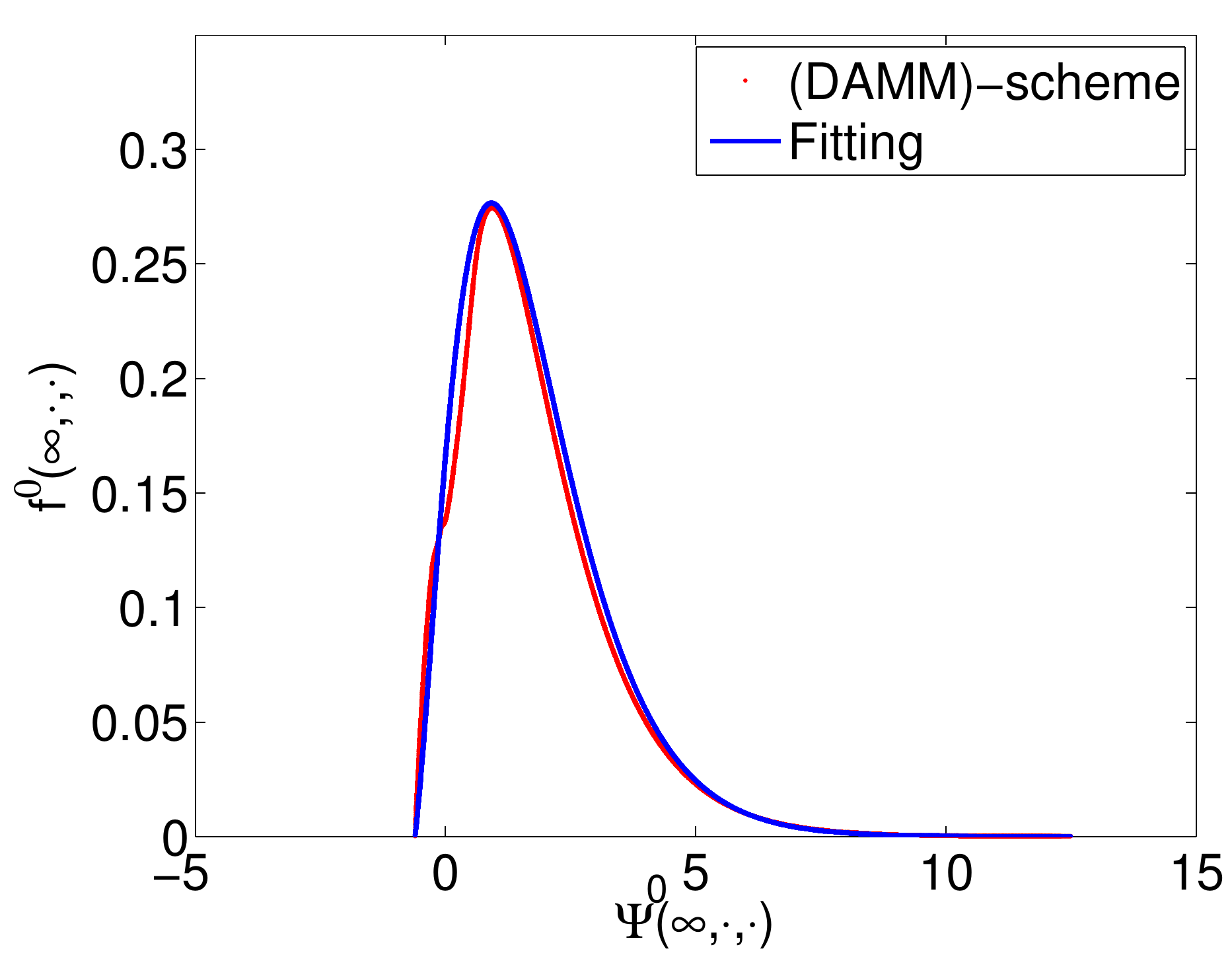}
	\caption{$n=50$ and $\eps=0$.}
	\end{subfigure} \\ 
	\vfill
	\vfill
	\begin{subfigure}{.49\textwidth}
  	\centering
  	\includegraphics[width=\linewidth,trim={0cm 0cm 0cm 0cm},clip]{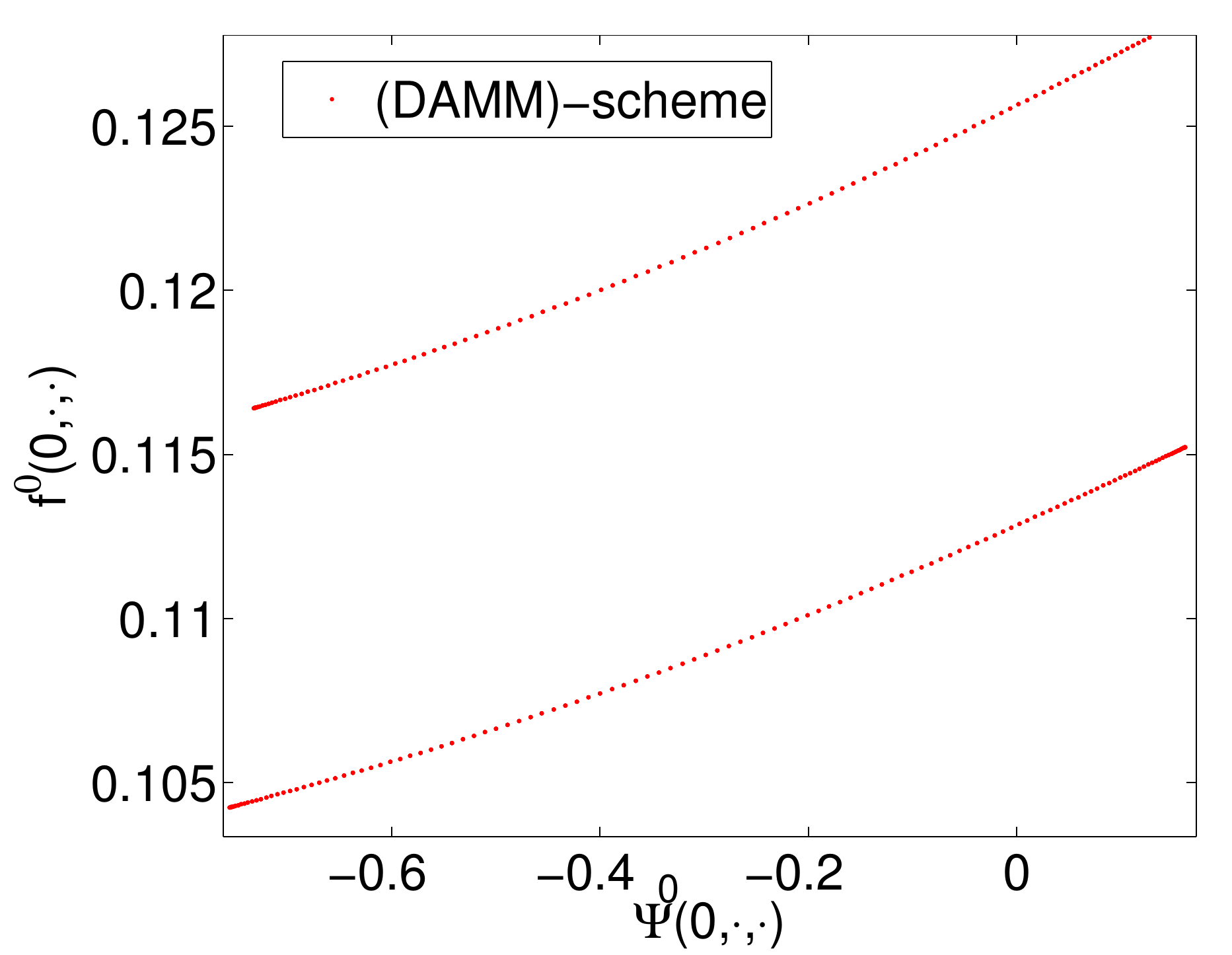}
	\caption{$n=0$.}
	\end{subfigure}
    \begin{subfigure}{.49\textwidth}
  	\centering
  	\includegraphics[width=\linewidth,trim={0cm 0cm 0cm 0cm},clip]{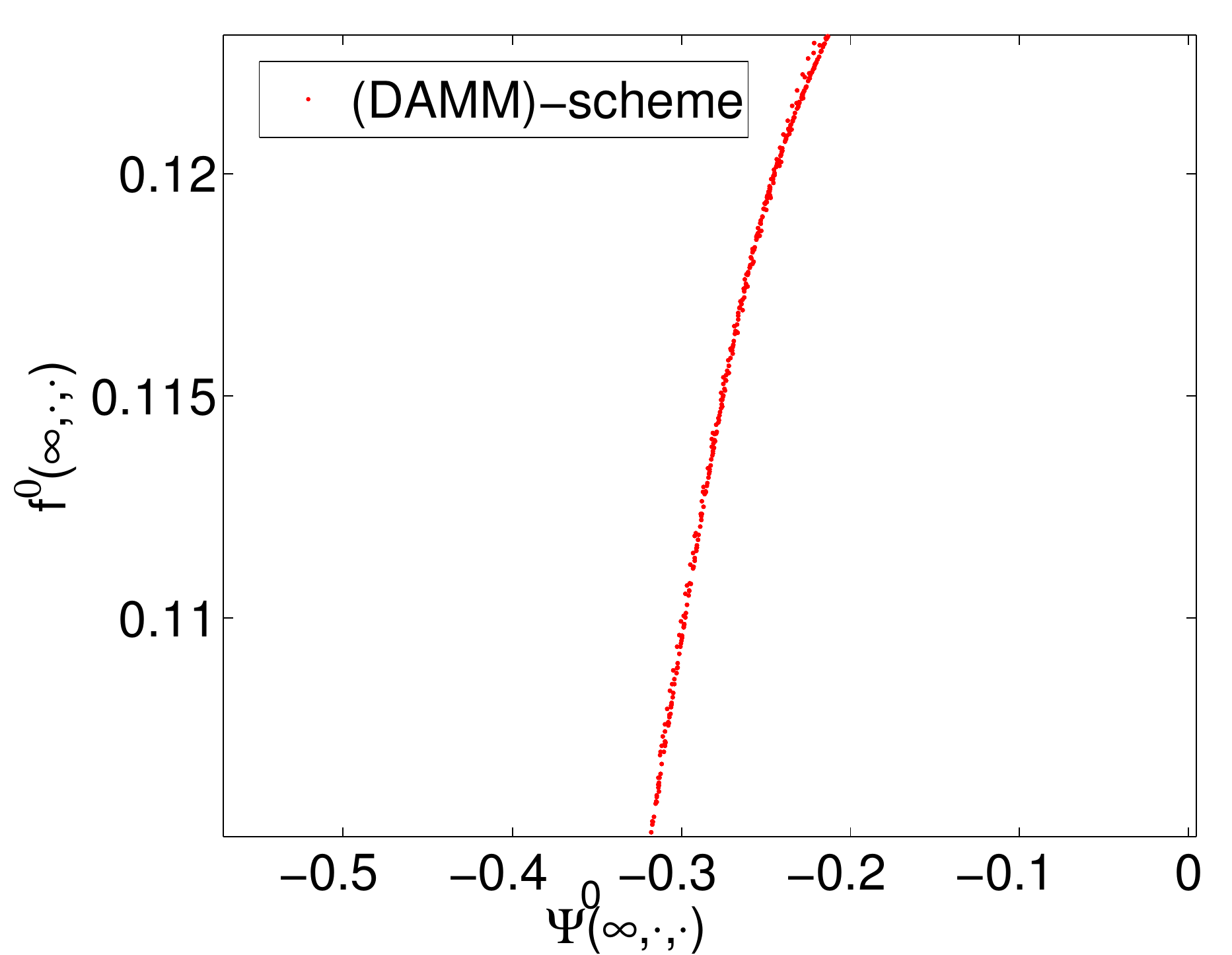}
	\caption{$n=50$ and $\eps=0$.}
	\end{subfigure} \\ 
	\vfill
	\vfill
	\begin{subfigure}{.49\textwidth}
  	\centering
  	\includegraphics[width=\linewidth,trim={0cm 0cm 0cm 0cm},clip]{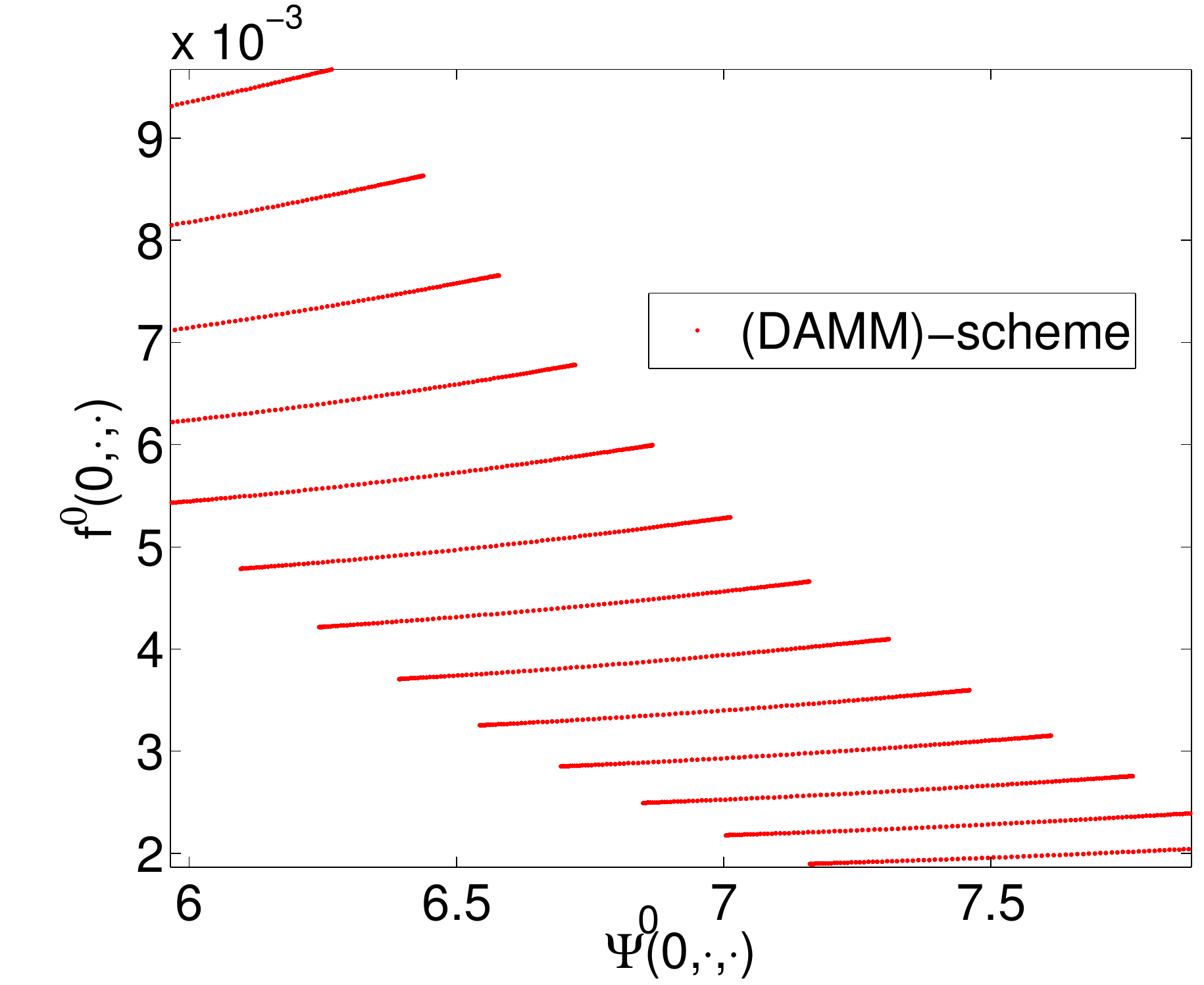}
	\caption{$n=0$.}
	\end{subfigure}
    \begin{subfigure}{.49\textwidth}
  	\centering
  	\includegraphics[width=\linewidth,trim={0cm 0cm 0cm 0cm},clip]{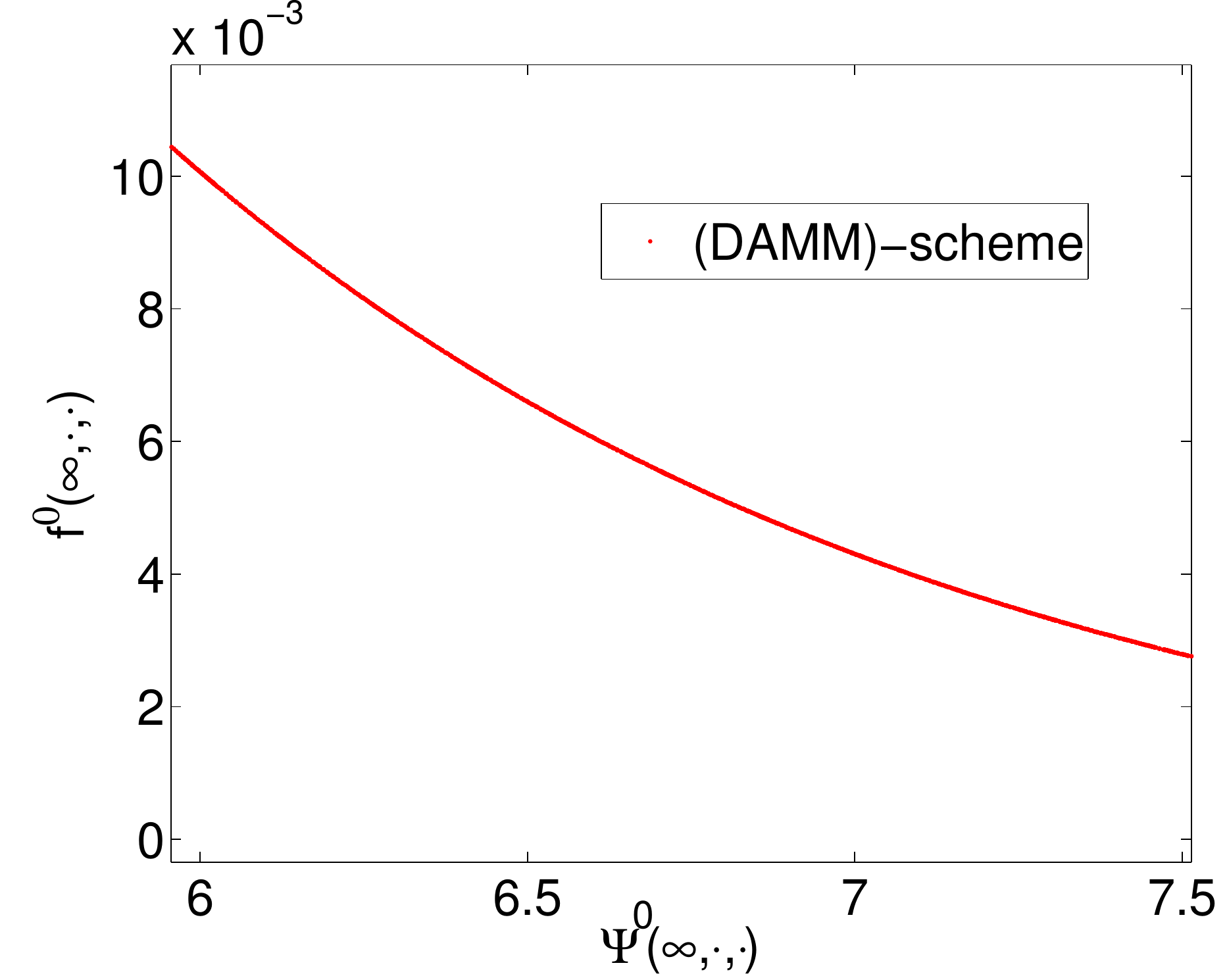}
	\caption{$n=50$ and $\eps=0$.}
	\end{subfigure} \\ 
\caption{(Two-stream instability for $\eps=0$ and $f^2_{in}$) Plot of the distribution function at times $t=0$ (panels (A), (C), (E)) and $t=50 \, \Delta t$ (panels (B), (D), (F)), with $\Delta t = 0.01$ as a function of $\Psi^0$. Mesh size: $N_x = N_y= 256$. Stabilization parameter : $\sigma = (\Delta x / L_x)^2$.}
\label{BGK_figures_curve}
\end{figure}


\section{Concluding remarks and perspectives} The long-time behavior of the Vlasov-Poisson system is a challenging problem, requiring some investigations. Numerically, it is arduous to obtain a solution avoiding numerical pollution in such time asymptotics. We have developed an asymptotic-preserving scheme, based on a micro-macro decomposition coupled with a stabilization procedure in order to limit this problem. The analysis of the two-stream instability has shown the remarkable properties of the (DAMM)-scheme, permitting to attain a BGK-like equilibrium in few time iterations with low numerical costs and small errors. Nevertheless, the (DAMM)-scheme could be improved, notably through its stabilization part. The circle test case helped us a lot to better understand the choice of the stabilization parameter. But the Vlasov-Poisson test case shows that this parameter brokes the conservation properties of the system. One may imagine for a future work to replace the stabilization parameter by a more general operator which could improve the conservation properties of the Vlasov-Poisson equation.

\bigskip

\noindent {\bf Acknowledgments.} The authors would like to acknowledge support from the ANR PEPPSI  (Plasma Edge Physics and Plasma-Surface Interactions, 2013-2017). Furthermore, this work has been carried out within the framework of the EUROfusion Consortium and has received funding from the Euratom research and training program 2014-2018 under grant agreement No 633053. The views and opinions expressed herein do not necessarily reflect those of the European Commission.

\end{document}